\newtheorem{theo}{{Theorem}}[section]
\newtheorem{lemm}[theo]{Lemma}
\newtheorem{rema}[theo]{Remark}
\newtheorem{defi}[theo]{Definition}
\newtheorem{coro}[theo]{Corollary}
\numberwithin{equation}{section}
\begin{document}

\title{Whittaker modules for a subalgebra of $\emph{N}$=2 superconformal algebra }

\author{ Naihuan Jing${}^{1,2}$, Pengfa Xu${}^1$ and Honglian Zhang${}^1$}
\maketitle

\begin{center}
\footnotesize
\begin{itemize}
\item[1] Department of Mathematics, Shanghai University, Shanghai 200444, China
\item[2] Department of Mathematics, North Carolina State University,   Ra\-leigh, NC 27695-8205, USA
\end{itemize}
\end{center}
\begin{abstract}
 In this paper, Whittaker modules are studied for a subalgebra $\mathfrak{q}_{\epsilon}$ of the $\emph{N}$=2 superconformal algebra.
 The Whittaker modules are classified by central characters.
 Additionally, criteria for the irreducibility of the Whittaker modules are given.\\

\noindent{\textbf{Keywords:}}
Whittaker module; Whittaker vector; $\emph{N}$=2 superconformal algebra.
\end{abstract}

%


\section{Introduction}

Whittaker modules, named after relations with the Whittaker equation in number theory, arise naturally in the representation theory of Lie algebras.
They were first introduced in the context of constructing representations for $sl_2(\mathbb{C})$ by Arnal and Pinzcon in \cite{5}.
Later, Kostant extended the concept to all finite-dimensional complex semisimple Lie algebras (denoted as $\mathfrak{g}$), linking Whittaker modules bijectively to the ideals of the center $Z(\mathfrak{g})$ of the Lie algebras $\frak g$ in \cite{1}. This underscored the significance of Whittaker modules, including their central role in the classification of irreducible modules for $sl_2(\mathbb{C})$.
Recently, there has been an intensive investigation of Whittaker modules for many infinite-dimensional Lie algebras such as the Virasoro algebra \cite{17,13,11}, generalized Weyl algebras \cite{36}, Heisenberg algebras \cite{14}, Heisenberg-Virasoro algebra \cite{29}, Schrodinger-Virasoro algebra \cite{27}, Schrodinger-Witt algebra \cite{7}, \emph{W}-algebra $W(2,2)$ \cite{25}, and affine Lie algebra $A_1^{(1)}$ \cite{24}. Some of these studies have also provided for Lie algebras with triangular decompositions (see~\cite{15,12,9,8} and references therein).
Moreover, substantial research has been conducted on the quantum deformation of Whittaker modules, as well as on the modules induced from Whittaker modules
(see \cite{33,6} for more details).

A general categorical framework for Whittaker modules was proposed in \cite{21,19}. Notably, degenerate Whittaker vectors of the Virasoro algebra have emerged naturally in the AGT conjecture in physics \cite{20}. In \cite{18}, an explicit formula for degenerate Whittaker vectors has been derived in terms of the Jack symmetric functions.

Expanding the scope, Whittaker modules and Whittaker categories have been generalized to finite-dimensional simple Lie superalgebras based on certain representations of nilpotent finite-dimensional Lie superalgebras \cite{22}. Recently extensive study of Whittaker modules have been done
for a variety of
infinite-dimensional Lie superalgebras. In \cite{23}, simple Whittaker modules for the super-Virasoro algebras, including the Neveu-Schwarz algebra and the Ramond algebra, were classified as well as criteria for the irreducibility of these modules. In \cite{26}, C. Chen classified simple Whittaker modules for classical Lie superalgebras in terms of their parabolic decompositions. Whittaker modules have also garnered significant attention in relation to other objects (cf. \cite{ 22, 28, 37, 38}). In addition, investigations into Whittaker modules have been conducted within the framework of vertex operator algebra theory (see \cite{32,24,31,30}).
Overall, the study of Whittaker modules extends across various contexts, spanning both Lie algebras and Lie superalgebras, and incorporating different mathematical frameworks for analysis and classification.

The $N=2$ superconformal algebras can be divided into four sectors: the Ramond sector, the Neveu-Schwarz sector, the topological sector, and the twisted sector. In particular, the Lie superalgebra $\mathfrak{q}_{\epsilon}$ is a subalgebra of both the Ramond and Neveu-Schwarz sectors of the $N=2$ superconformal algebra, with its even part being the twisted Heisenberg-Virasoro algebra. The study of cuspidal modules for $\mathfrak{q}_{\epsilon}$ was carried out in \cite{34}, which plays a pivotal role in the classification of all simple cuspidal weight modules for the $N=2$ superconformal algebra.

In this paper, we investigate Whittaker modules for the Lie superalgebra $\mathfrak{q}_{\epsilon}$. We provide a classification of all finite-dimensional modules over the subalgebra $\mathfrak{q}_{\epsilon}^+$, demonstrating that they are one-dimensional. Moreover, we also
classify simple Whittaker modules based on their central characters and establish a criterion for the irreducibility of Whittaker modules.

Throughout this paper, we adopt the conventions that $\mathbb{C}$ denotes the set of complex numbers, $\mathbb{C}^*$ represents the set of nonzero complex numbers, $\mathbb{N}$ corresponds to the set of non-negative integers, $\mathbb{Z}_+$ signifies the set of positive integers, and $\mathbb{Z}$ denotes the set of integers. For convenience, unless explicitly stated, all elements within superalgebras and modules are assumed to be homogeneous.

\section{Preliminary}

In this section, we list some notations and known facts that will be used throughout the paper. A superspace is a vector space with a $\mathbb{Z}_2$-grading, denoted as $\emph{V}=\emph{V}_{\bar{0}} \oplus \emph{V}_{\bar{1}}$. Typically, the elements in $\emph{V}_{\bar{0}}$ are referred to as even, while the elements in $\emph{V}_{\bar{1}}$ are called odd.
A Lie superalgebra is a superspace $\mathfrak{L}=\mathfrak{L}_{\bar{0}} \oplus \mathfrak{L}_{\bar{1}}$ equipped with a commutation operation $[\cdot, \cdot]$ that satisfies the following conditions,
$$
[x_{1},x_{2}]=-(-1)^{p_{1}p_{2}}[x_{2},x_{1}],
$$
$$
(-1)^{p_{1}p_{2}}[[x_{1},x_{2}],x_{3}]+(-1)^{p_{2}p_{3}}[[x_{2},x_{3}],x_{}]+(-1)^{p_{3}p_{1}}[[x_{3},x_{1}],x_{2}]=0,
$$
here, $x_{1} \in \mathfrak{L}_{\bar{p}_{1}}$, $x_{2} \in \mathfrak{L}_{\bar{p}_{2}}$, $x_{3} \in \mathfrak{L}_{\bar{p}_{3}}$.


\begin{defi}\label{D 2.1}
Let $\mathfrak{L}$ be a Lie superalgebra. An $\mathfrak{L}$-module is a $\mathbb{Z}_{2}$-graded vector space $V=V_{\bar{0}} \oplus V_{\bar{1}}$ together with a bilinear map, $(x,v) \mapsto xv$, denoted as $\mathfrak{L} \times V \rightarrow V$, such that it satisfies the following condition,

$$
x(yv)-(-1)^{p_{1}p_{2}}y(xv)=[x,y]v,
$$

for all $x \in \mathfrak{L}_{\bar{p}{1}}$, $y \in \mathfrak{L}_{\bar{p}{2}}$, $v \in V$, and $\mathfrak{L}_{\sigma} V_{\tau} \subseteq V_{\sigma\tau}$ for all $\sigma,\tau \in \mathbb{Z}_{2}$.
\end{defi}

\begin{defi}\label{D 4.1}
Let $\mathfrak{L}$ be a Lie superalgebra, $V$ a $\mathfrak{L}$-module and $x \in \mathfrak{L}$. If for any $v \in V$ there exists $n \in \mathbb{Z}_+$ such that $x^n v=0$, we say that $x$ acts on $V$ locally nilpotent. The action of $\mathfrak{L}$ on $V$ is locally nilpotent if for any $v\in V$ there exists $n \in \mathbb{Z}_+$ such that $\mathfrak{L}^n v=0$.
\end{defi}
The $\emph{N}$=2 superconformal algebra $\mathfrak{g}_{\epsilon}$ is a Lie superalgebra over $\mathbb{C}$ with the basis $\{ L_{m}, H_{m}, G_{r}^{\pm}, C \mid m, n\in \mathbb{Z}~{\rm and}~r \in \mathbb{Z}+\epsilon, \epsilon=0~{\rm or}~ \frac{1}{2}~\}$ with the central element $C$ and
subject to the following commutation relations:
\begin{align}\label{e:re1}
&{[L_{m},L_{n}]=(n-m)L_{m+n}+\frac{1}{12}\delta_{m+n,0}(n^{3}-n)C,}\\ \label{e:re2}
&{[H_{m},H_{n}]=\frac{1}{3}m \delta_{m+n,0}C,}\\ \label{e:re3}
&{[L_{m},H_n]=nH_{m+n},}\\ \label{e:re4}
&{[L_m,G_{p}^{\pm}]=(p-\frac{m}{2})G_{p+m}^{\pm},}\\ \label{e:re5}
&{[H_m,G_{p}^{\pm}]=\pm G_{p+m}^{\pm},}\\ \label{e:re6}
&{[G_{p}^{+},G_{q}^{-}]=-2L_{p+q}+(p-q)H_{p+q}+\frac{1}{3}(p^2-\frac{1}{4})\delta_{p+q,0}C,}\\ \label{e:re7}
&{[G_{p}^{\pm},G_{q}^{\pm}]=0.}
\end{align}
Here $\mathfrak{g}_{0}$ is called the $N$=2 Ramond algebra and $\mathfrak{g}_{\frac{1}{2}}$ is called the $N$=2 Neveu-Schwarz algebra.
\begin{defi}\label{D 2.2}
Let $\mathfrak{q}_{\epsilon}$ is the subalgebra of $\mathfrak{g}_{\epsilon}$ with a basis $\{ L_{m}, H_{m}, G_{r}, C \mid m, n\in \mathbb{Z}~{\rm and}~r \in \mathbb{Z}+\epsilon, \epsilon=0~{\rm or}~ \frac{1}{2}~\}$ satisfying \eqref{e:re1}-\eqref{e:re5} and \eqref{e:re7}, where only relations for $G_r=G_r^+$ are involved.
\end{defi}
Note that $\mathfrak{q}_{\epsilon}$ is $\mathbb{Z}_{2}$-graded
with 
$\mathfrak{q}_{\epsilon}^{\bar{0}}$ := $\mathbb{C} \{ \emph{L}_{m}, \emph{H}_{m}, \emph{C} \mid \emph{m} \in \mathbb{Z} \}$ and $\mathfrak{q}_{\epsilon}^{\bar{1}}$ := $\mathbb{C} \{ \emph{G}_r\mid \emph{r} \in \mathbb{Z}+\epsilon \}$.

By definition, the even part of $\mathfrak{g}_{\epsilon}$ is
the twisted Heisenberg-Virasoro algebra, where the Virasoro algebra (Vir) can be viewed as a subalgebra
with basis $\{ \emph{L}_{m}, \emph{C} \mid m \in \mathbb{Z} \}$ with the central element $C$
subject to the relation \eqref{e:re1}.

The Lie superalgebra $\mathfrak{q}_{\epsilon}$ also has a $(1-\epsilon)\mathbb{Z}$-grading by the eigenvalues of the adjoint action of $L_0$. It follows that $\mathfrak{q}_{\epsilon}$ has the following triangular decomposition,
$$
\mathfrak{q}_{\epsilon}=\mathfrak{q}_{\epsilon}^{+} \oplus \mathfrak{q}_{\epsilon}^{-} \oplus \mathfrak{q}_{\epsilon}^{0},
$$
where
$$
\begin{aligned}
&{\mathfrak{q}_{\epsilon}^{+}={\rm span}_{\mathbb{C}} \{L_m, H_n, G_r \mid m, n \in \mathbb{Z}_{+}~{\rm and}~r \in \mathbb{Z}_{+}-\epsilon \}},\\
&{\mathfrak{q}_{\epsilon}^{-}={\rm span}_{\mathbb{C}} \{L_{-m}, H_{-n}, G_{-r} \mid m, n \in \mathbb{Z}_{+}~{\rm and}~r \in \mathbb{Z}_{+}-\epsilon \}},\\
&{\mathfrak{q}_{\epsilon}^{0}={\rm span}_{\mathbb{C}} \{C,L_0,H_0, G_0 \}}.
\end{aligned}
$$

Set
$$
\begin{aligned}
\mathfrak{b}_{\epsilon}^-&{=\mathfrak{q}_{\epsilon}^{-}\oplus\mathfrak{q}_{\epsilon}^{0}},\\
\mathfrak{b}_{\epsilon}^+&{=\mathfrak{q}_{\epsilon}^{+}\oplus\mathfrak{q}_{\epsilon}^{0}}.
\end{aligned}
$$

Now, let us introduce the concepts of  Whitaker modules and Whitaker vectors.

\begin{defi}\label{D 2.3}
For a $\mathfrak{q}_{\epsilon}$-module V, a vector $v \in V$ is a Whittaker vector if $xv = \psi(x) v$ for 
a complex function $\psi$ on $\mathfrak{q}_{\epsilon}^{+}$. Furthermore, if $v$ generates V, we call V a Whittaker module of type $\psi$ and $v$ a cyclic Whittaker vector of V.
\end{defi}
\begin{lemm}\label{L 2.4}
Let $\psi:\mathfrak{q}_{\epsilon}^+ \to \mathbb{C}$ be a Lie superalgebra homomorphism, then
$$
\psi(L_m)=\psi(H_n)=\psi(G_r)=0
$$
for $m,n \in \mathbb{Z}_+,r \in \mathbb{Z}_{+}-\epsilon, m\geq3 ,n\geq2$ and $r\geq 1-\epsilon$.
\end{lemm}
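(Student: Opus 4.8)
The plan is to exploit two consequences of $\psi:\mathfrak{q}_\epsilon^+\to\mathbb{C}$ being a Lie superalgebra homomorphism, where $\mathbb{C}$ is regarded as the purely even abelian Lie superalgebra. First, $\psi$ is an even map, so it must annihilate the odd part of $\mathfrak{q}_\epsilon^+$. Second, since the target is abelian, $\psi$ kills every bracket: $\psi([x,y])=[\psi(x),\psi(y)]=0$ for all $x,y\in\mathfrak{q}_\epsilon^+$. The three assertions then separate cleanly. The vanishing of $\psi$ on the $G_r$ is purely a parity statement, while the vanishing on $L_m$ for $m\ge 3$ and on $H_n$ for $n\ge 2$ follows from writing each such generator as a bracket of two elements of $\mathfrak{q}_\epsilon^+$ with a nonzero structure constant.

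For the odd generators, I would note that the odd part $\mathfrak{q}_\epsilon^{\bar 1}$ of $\mathfrak{q}_\epsilon^+$ is spanned by $\{G_r \mid r\in\mathbb{Z}_+-\epsilon\}$ and that $\mathbb{C}$ has trivial odd component, so evenness of $\psi$ forces $\psi(G_r)=0$ for every $r\ge 1-\epsilon$. (Alternatively, on a Whittaker module the relation $[G_r,G_r]=0$ gives $G_r^2=0$, whence $\psi(G_r)^2=0$.) I expect this to be the one genuinely subtle point: the lowest odd generator $G_{1-\epsilon}$ cannot be written as a bracket of two elements of $\mathfrak{q}_\epsilon^+$, so no bracket relation alone reaches it, and a genuine super/parity input is required here rather than a structure-constant computation.

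For $L_m$ I would use \eqref{e:re1}, which on $\mathfrak{q}_\epsilon^+$ reduces to $[L_a,L_b]=(b-a)L_{a+b}$ because the central term carries the factor $\delta_{a+b,0}$ and $a+b>0$ for positive indices. Taking $a=1$ and $b=m-1$ for $m\ge 3$ gives $[L_1,L_{m-1}]=(m-2)L_m$ with $L_1,L_{m-1}\in\mathfrak{q}_\epsilon^+$ and $m-2\ge 1\ne 0$; applying $\psi$ yields $(m-2)\psi(L_m)=0$, hence $\psi(L_m)=0$. Likewise, for $H_n$ I would use \eqref{e:re3}: from $[L_1,H_{n-1}]=(n-1)H_n$ for $n\ge 2$, with $n-1\ge 1\ne 0$, applying $\psi$ gives $\psi(H_n)=0$. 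The only care needed is to confirm that both factors in each bracket lie in $\mathfrak{q}_\epsilon^+$, which is automatic since the chosen indices are positive, and that the coefficient is nonzero, which is guaranteed by the ranges $m\ge 3$ and $n\ge 2$; these steps are then routine once the bracket decompositions are fixed.
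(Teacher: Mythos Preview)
Your argument is correct and matches the paper's approach: both use that $\psi$ annihilates brackets to handle $L_m$ and $H_n$, and the super structure to handle $G_r$. The only cosmetic differences are that the paper treats the base cases $m=3$, $n=2$, $r=1-\epsilon$ explicitly and then appeals to induction, and for $G_{1-\epsilon}$ it uses your alternative squaring argument $2\psi(G_{1-\epsilon})^2=\psi([G_{1-\epsilon},G_{1-\epsilon}])=0$ rather than the parity observation.
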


\begin{proof}
By the commutator relations in Definition \ref{D 2.2}, we can derive
$$
\psi(L_3)=\psi([L_2, L_1])=[\psi(L_2), \psi(L_1)]=0,
$$
$$
\psi(H_2)=\psi([L_1, H_1])=[\psi(L_1), \psi(H_1)]=0,
$$
$$
2\psi(G_{1-\epsilon})^2=[\psi(G_{1-\epsilon}), \psi(G_{1-\epsilon})]=\psi([G_{1-\epsilon}, G_{1-\epsilon}])=0.
$$
Hence $\psi(G_{1-\epsilon})$=0. By induction we have $\psi(L_m)=\psi(I_n)=\psi(G_r)=0$ for $m,n\in \mathbb{Z}_{+},r \in \mathbb{Z}_{+}-\epsilon, m\geq3 ,n\geq2$ and $r\geq 1-\epsilon$.
\end{proof}

For a classical finite-dimensional complex semisimple Lie algebra $\mathfrak{L}$, a Whittaker module is defined based on an algebra homomorphism from the positive nilpotent subalgebra $\mathfrak{L}^+$ to $\mathbb{C}$ (as described in \cite{1}). This homomorphism is required to be nonsingular, meaning that it takes nonzero values on the Chevalley generators of $\mathfrak{L}^+$.

In our current setting, the elements $L_1, L_2, H_1, G_1 \in \mathfrak{q}_{\epsilon}^{+}$ generate $\mathfrak{q}_{\epsilon}^{+}$. Therefore, we assume the existence of a Lie superalgebra homomorphism $\psi: \mathfrak{q}_{\epsilon}^{+} \rightarrow \mathbb{C}$ with the property that $\psi(L_1), \psi(L_2), \psi(H_1) \neq 0$.
Let $S(C)$ denote the symmetric algebra generated by $C$, which consists of polynomials in $C$. It is evident that $S(C)$ is  a subset of $Z(U(\mathfrak{q_{\epsilon}}))$, the center of the universal enveloping algebra $U(\mathfrak{q_{\epsilon}})$.

We define a pseudopartition $\lambda$ to be a non-decreasing sequence of non-negative integers,
$$
\lambda=( 0 \leq \lambda_1 \leq \lambda_2 \leq \cdots \leq
\lambda_m).
$$
The set of pseudopartitions is denoted by $\mathcal{P}$. Similarly, we define a strict pseudopartition $\lambda$ as a strictly increasing sequence of non-negative integers,
$$
\lambda=( 0 \leq \lambda_1 < \lambda_2 < \cdots < \lambda_m).
$$
We denote the set of strict pseudopartitions as $\mathcal{Q}$, which is a subset of $\mathcal{P}$.

An alternative representation of pseudopartition is to display the multiplicities by writing
$$
\lambda = (0^{\lambda(0)}, 1^{\lambda(1)}, 2^{\lambda(2)}, \ldots),
$$
where $\lambda(k)$ denotes the number of times $k$ appearing in the pseudopartition. When $k$ is sufficiently large, $\lambda(k)=0$. It is worth noting that if $\mu \in \mathcal{Q}$, then $\mu(k)$ takes values in $\{0, 1\}$.

For $\lambda, \omega \in \mathcal{P},\mu\in \mathcal{Q}$, we define the elements $L_{-\lambda},H_{-\omega},G_{-\mu-\epsilon} \in U(\mathfrak{q_{\epsilon}})$ as follows,
$$
\begin{aligned}
L_{-\lambda}&{=L_{-\lambda_m} \cdots L_{-\lambda_2}  L_{-\lambda_1}= \cdots L_{-1}^{\lambda(1)} L_0^{\lambda(0)}},\\
H_{-\omega}&{=H_{-\omega_n} \cdots H_{-\omega_2}  H_{-\omega_1}= \cdots H_{-1}^{\omega(1)} H_0^{\omega(0)}},\\
G_{-\mu-\epsilon}&=G_{-\mu_r-\epsilon}\cdots G_{-\mu_2-\epsilon} G_{-\mu_1-\epsilon} = \cdots G_{-1-\epsilon}^{\mu(1)} G_{-\epsilon}^{\mu(0)}.
\end{aligned}
$$

We define the following notations,
$$
\begin{aligned}
|\lambda |\quad~&{= \lambda_1 +  \lambda_2 + \cdots + \lambda_s=\sum_{i\ge0}i\lambda(i) \quad \mbox{(the size of $\lambda$)}},\\
|\mu+\epsilon|\quad~&{= (\mu_1+\epsilon) +  (\mu_2+\epsilon) + \cdots + (\mu_r+\epsilon)=|\mu|+r\epsilon},\\
l(\lambda)\quad~&{=\lambda(0) + \lambda(1) + \cdots \quad \mbox{(the $\#$ of parts of $\lambda$)}},
\end{aligned}
$$
$$
|\lambda+\omega+\mu+\epsilon|\quad~=|\lambda|+|\omega|+|\mu+\epsilon|.
$$
We define $\overline{0}= (0^0, 1^0, 2^0, \ldots)$, and write $L_{\overline{0}}=H_{\overline{0}}=G_{\overline{0}-\epsilon}=1 \in U(\mathfrak{q_{\epsilon}})$. In the following, we treat $\overline{0}$ as an element of both $\mathcal{P}$ and $\mathcal{Q}$. For any $\lambda,\omega \in \mathcal{P}$, $\mu \in \mathcal{Q}$ and $p(C) \in S(C)$, $$p(C)L_{- \lambda}I_{-\omega}G_{-\mu-\epsilon} \in U(\mathfrak{q_{\epsilon}})_{-|\lambda|-|\omega|-|\mu+\epsilon|},$$ where $U(\mathfrak{q_{\epsilon}})_{-|\lambda|-|\omega|-|\mu+\epsilon|}$ is the
$-|\lambda|-|\omega|-|\mu+\epsilon|$-weight space of $U(\mathfrak{q_{\epsilon}})$ under the adjoint action.

For a given $\psi: \mathfrak{q}_{\epsilon}^{+} \rightarrow \mathbb{C}$, we define $\mathbb{C}_\psi$ to be the one-dimensional $\mathfrak{q}_{\epsilon}^{+}$-module with the action $x \alpha = \psi(x) \alpha$ for $x \in \mathfrak{q}_{\epsilon}^{+}$ and $\alpha \in \mathbb{C}$. The universal Whittaker module $M_{\epsilon}(\psi)$ is then defined as
$$
M_{\epsilon}(\psi)= U(\mathfrak{q_{\epsilon}}) \otimes_{U(\mathfrak{q}_{\epsilon}^{+})} \mathbb{C}_{\psi}.
$$

It is evident that $M_{\epsilon}(\psi)$ is a Whittaker module of type $\psi$ with the cyclic Whittaker vector $w = 1 \otimes 1 \in M_{\epsilon}(\psi)$. By the Poincare-Birkhoff-Witt (PBW) theorem,
$U(\mathfrak{b}_{\epsilon}^-)$ has a basis $$\{C^t L_{- \lambda}H_{-\omega}G_{-\mu-\epsilon} \mid \lambda,\omega \in \mathcal{P}, \mu \in \mathcal{Q}, t\in\mathbb{N}\}.$$ Thus, $M_{\epsilon}(\psi)$ has a basis given by
$$
\{C^t L_{- \lambda}H_{-\omega}G_{-\mu-\epsilon}w \mid \lambda,\omega \in \mathcal{P}, \mu \in \mathcal{Q}, t\in\mathbb{N}\}.$$
Moreover, we have $uw\neq 0$ whenever $0\neq u \in U(\mathfrak{b}_{\epsilon}^-)$. We define the degree of $C^t L_{- \lambda}H_{-\omega}G_{-\mu-\epsilon} w$ to be $|\lambda|+|\omega|+|\mu+\epsilon|$. For any $0 \neq v \in M_{\epsilon}(\psi)$, we define ${\rm max~deg}(v)$ to be the maximum degree among the nonzero components of homogeneous degree, and we define  ${\rm max~deg}(0) = -\infty$. Furthermore, $\max_{L_0} (v)$ is defined as the maximum value of $\lambda (0)$ among the terms $C^t L_{- \lambda}H_{-\omega}G_{-\mu-\epsilon} w$ with nonzero coefficient.

For $\xi \in \mathbb{C}$, we define the quotient module
$$L_{\epsilon}(\psi, \xi) = M_{\epsilon}(\psi) / (C - \xi)M_{\epsilon}(\psi),$$
and we denote the canonical homomorphism by  $\pi: M_{\epsilon}(\psi) \rightarrow L_{\epsilon}(\psi, \xi)$. For any $u \in M_{\epsilon}(\psi)$, we use  $\bar{u}$ denote $\pi(u)$. Similar to Lemma in \cite{2} we have the following universal property of $M_{\epsilon}(\psi)$.

\begin{lemm}\label{L 2.5}
Given the fixed $\psi$ and $M_{\epsilon}(\psi)$ as described above, we have the following properties.
\begin{itemize}
\item[(i)] If $V$ is a Whittaker module of type $\psi$ with cyclic Whittaker vector $w_V$,  then there exists a surjective map $\varphi: M_{\epsilon}(\psi) \rightarrow V$ that maps $w=1 \otimes 1$ to $w_V$.
\item[(ii)] Suppose $M$ is a Whittaker module of type $\psi$ with cyclic Whittaker vector $w_M$, and for every Whittaker module $V$ of type $\psi$ with cyclic Whittaker vector $w_V$ there exists a surjective homomorphism $\theta: M \rightarrow V$ satisfying $\theta (w_M) = w_V$. In that case, we have $M \cong M_{\epsilon}(\psi)$.
\end{itemize}
\end{lemm}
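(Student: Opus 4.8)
The plan is to recognize Lemma \ref{L 2.5} as the familiar universal (Frobenius reciprocity) property of an induced module, transported to the Whittaker setting. I would prove (i) by writing the map down explicitly, and (ii) by a short ping-pong argument between two surjections.

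For part (i), the first thing to record is that the Whittaker condition $x w_V = \psi(x) w_V$ for $x \in \mathfrak{q}_\epsilon^+$ automatically propagates to the enveloping algebra: for a PBW monomial $x_1 \cdots x_k$ with each $x_i \in \mathfrak{q}_\epsilon^+$ one gets $(x_1 \cdots x_k) w_V = \psi(x_1) \cdots \psi(x_k) w_V$, so $w_V$ is a common eigenvector for $U(\mathfrak{q}_\epsilon^+)$ whose eigenvalue is the algebra homomorphism $\tilde\psi \colon U(\mathfrak{q}_\epsilon^+) \to \mathbb{C}$ extending $\psi$ (equivalently, $\mathbb{C} w_V \subseteq V$ is a copy of $\mathbb{C}_\psi$). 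I would then define $\varphi \colon M_\epsilon(\psi) \to V$ by $\varphi(u \otimes \alpha) = \alpha\, u\, w_V$ for $u \in U(\mathfrak{q}_\epsilon)$ and $\alpha \in \mathbb{C}$. The single point needing verification is that this is balanced over $U(\mathfrak{q}_\epsilon^+)$: for $p \in U(\mathfrak{q}_\epsilon^+)$ one has $\varphi(up \otimes \alpha) = \alpha (up) w_V = \tilde\psi(p)\, \alpha\, u w_V = \varphi(u \otimes p\alpha)$, which is exactly the eigenvector property above. That $\varphi$ is a (homogeneous) $\mathfrak{q}_\epsilon$-module map follows from $\varphi(y\cdot(u\otimes\alpha)) = \alpha (yu) w_V = y\,\varphi(u\otimes\alpha)$, and clearly $\varphi(w) = w_V$; surjectivity holds because the image is a submodule containing the generator $w_V$ of $V$.

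For part (ii), I would feed (i) into the hypothesis. Since $M$ is a Whittaker module of type $\psi$ with cyclic vector $w_M$, part (i) yields a surjection $\varphi \colon M_\epsilon(\psi) \to M$ with $\varphi(w) = w_M$; since $M_\epsilon(\psi)$ is also such a module with cyclic vector $w$, the assumed universal property of $M$ supplies a surjection $\theta \colon M \to M_\epsilon(\psi)$ with $\theta(w_M) = w$. The composite $\theta \circ \varphi$ is an endomorphism of $M_\epsilon(\psi)$ fixing $w$; because $w$ generates $M_\epsilon(\psi)$, any endomorphism $\phi$ with $\phi(w) = w$ satisfies $\phi(uw) = u\phi(w) = uw$ and hence equals the identity, so $\theta \circ \varphi = \mathrm{id}$. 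Symmetrically $\varphi \circ \theta$ fixes the generator $w_M$ and so equals $\mathrm{id}_M$. Thus $\varphi$ and $\theta$ are mutually inverse and $M \cong M_\epsilon(\psi)$.

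I do not expect a genuine obstacle, since the whole content is the universal property of induction; the only step demanding attention is the balancedness check in (i). This is where one uses that $\psi$ is a bona fide Lie superalgebra homomorphism, so that $\mathbb{C}_\psi$ really is a module and the vanishing constraints of Lemma \ref{L 2.4} hold, guaranteeing that $\psi$ extends to the algebra homomorphism $\tilde\psi$ on $U(\mathfrak{q}_\epsilon^+)$. In the $\mathbb{Z}_2$-graded setting one should merely note that $w$, $w_V$, and hence $\varphi$ are homogeneous, so no sign corrections enter.
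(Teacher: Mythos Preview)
Your proof is correct and follows essentially the same line as the paper's. For (i) the paper defines $\varphi(uw)=uw_V$ and checks well-definedness by writing $u=\sum_\alpha b_\alpha n_\alpha$ with $b_\alpha\in U(\mathfrak{b}_\epsilon^-)$, $n_\alpha\in U(\mathfrak{q}_\epsilon^+)$ and invoking PBW freeness of $U(\mathfrak{b}_\epsilon^-)w$, whereas you phrase the same step as a balancedness check for the induced tensor product; for (ii) both arguments compose the two surjections and use that an endomorphism fixing the cyclic vector is the identity (the paper only verifies $\theta\circ\varphi=\mathrm{id}$, which already suffices since $\varphi$ is surjective, while you also check the other composite).
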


\begin{proof}
To prove (i), let $v \in M_{\epsilon}(\psi)$ be given, and consider  $u \in U(\mathfrak{q_{\epsilon}})$ such that $v = uw$.  We define $\varphi : M_{\epsilon}(\psi) \to V$ by $\varphi (v) = uw_V$. Since $w_V$ is a cyclic Whittaker vector of $V$, we only need to show that $\varphi$ is well-defined. To do this, it suffices to show that if $uw = 0$, then $uw_V = 0$.

For $u \in U(\mathfrak{q_{\epsilon}})$, we can write $u = \sum_\alpha b_\alpha n_\alpha$ , where $b_\alpha \in U(\mathfrak{b}_{\epsilon}^-)$ and $n_\alpha \in U(\mathfrak{q}_{\epsilon}^+)$. Therefore, we have $uw = \sum_\alpha b_\alpha n_\alpha w = \sum_\alpha \psi (n_\alpha) b_\alpha w$. By the Poincare-Birkhoff-Witt (PBW) theorem, if $\sum_\alpha \psi (n_\alpha) b_\alpha \in U(\mathfrak{b}_{\epsilon}^-)$ annihilates $w$, then it must be $\sum_\alpha \psi (n_\alpha) b_\alpha = 0$.~Since $w_V \in V$ is a Whittaker vector in $V$, we deduce
$$0 = \left( \sum_\alpha \psi (n_\alpha) b_\alpha \right) w_V = \sum_\alpha b_\alpha \psi (n_\alpha) w_V = \sum_\alpha b_\alpha n_\alpha w_V = uw_V,$$
which proves the well-definedness of $\varphi$.

For part (ii), we choose $V=M_{\epsilon}(\psi)$. By (i), there exists a surjective map $\varphi: M_{\epsilon}(\psi) \rightarrow M$ that taking $w=1 \otimes 1$ to $w_M$. Moreover, since for any Whittaker module $V$ of type $\psi$ with cyclic Whittaker vector $w_V$, there exists a surjective homomorphism $\theta: M \rightarrow M_{\epsilon}(\psi)$ with $\theta (w_M) = w$, we can apply this to $V = M_{\epsilon}(\psi)$ and obtain a surjective homomorphism $\theta: M \rightarrow M_{\epsilon}(\psi)$ satisfying $\theta(w_M) = w$. For any $v = uw \in M_{\epsilon}(\psi)$, using the properties of module homomorphisms, we have
$$
\theta(\varphi(v)) = \theta(uw_M) = u\theta(w_M) = uw = v,
$$
which implies $\theta \circ \varphi = \textbf{1}_{M_\psi}$. Consequently, $M$ is isomorphic to $M_{\epsilon}(\psi)$.
\end{proof}

These properties characterize the universal property of $M_{\epsilon}(\psi)$ and establish its uniqueness among Whittaker modules of type $\psi$ with the specified cyclic Whittaker vector.

\section{Simple Modules Over $\mathfrak{q_{\epsilon}}^+$}

 In this section, we will present a classification of all finite-dimensional simple modules over the subalgebra $\mathfrak{q}_{\epsilon}^{+}$.

\begin{theo}\label{P 4.10}
Let $\psi: \mathfrak{q}_{\epsilon}^+ \rightarrow \mathbb{C}$ be a Lie superalgebra homomorphism and $A(\psi)=\mathbb{C}w_{\psi}$ be a one-dimensional vector space with the action  $xw_{\psi}=\psi(x)w_{\psi}$ for all $x \in \mathfrak{q}_{\epsilon}^+$.  Then  every finite-dimensional simple module over $\mathfrak{q}_{\epsilon}^+$ is isomorphic to such an $A(\psi)$.
\end{theo}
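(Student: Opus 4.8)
The plan is to show that any finite-dimensional simple $\mathfrak{q}_{\epsilon}^{+}$-module $V$ is one-dimensional; the isomorphism with some $A(\psi)$ is then automatic, since a one-dimensional module is given by scalars $x\mapsto\psi(x)$ and the module axioms force $\psi$ to be a Lie superalgebra homomorphism (with $\psi(G_r)=0$, consistent with Lemma~\ref{L 2.4}). First I would pass to the finite-dimensional picture: the representation $\rho\colon\mathfrak{q}_{\epsilon}^{+}\to\mathfrak{gl}(V)$ has finite-dimensional image $\bar{\mathfrak{q}}=\rho(\mathfrak{q}_{\epsilon}^{+})$, and $V$ is a faithful simple $\bar{\mathfrak{q}}$-module. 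The rough idea is then: (a) all ``high'' generators act by nilpotent operators; (b) they generate an ideal acting trivially; (c) what remains is abelian, so $V$ is one-dimensional.

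The decisive step, and the main obstacle, is to control the even part. Write $\mathfrak{w}=\mathrm{span}_{\mathbb{C}}\{L_m:m\ge1\}$ for the positive Witt subalgebra. I claim every finite-dimensional quotient $Q$ of $\mathfrak{w}$ is nilpotent; in particular $\rho(\mathfrak{w})$ is nilpotent. To see this, let $\phi\colon\mathfrak{w}\to Q$ be the quotient map, set $\ell_m=\phi(L_m)$, and compute the lower central series: an easy induction using $[L_a,L_b]=(b-a)L_{a+b}$ gives $C^{j}(\mathfrak{w})=\mathrm{span}\{L_m:m\ge j+1\}$ for $j\ge2$, hence $C^{j}(Q)=\mathrm{span}\{\ell_m:m\ge j+1\}$. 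Since $Q$ is finite-dimensional this descending chain stabilizes to $Q_{\infty}=\bigcap_jC^{j}(Q)=\mathrm{span}\{\ell_m:m\ge M\}$ for all large $M$. If $Q_{\infty}\neq0$ I may choose $N$ with $\ell_N\neq0$ and $Q_{\infty}=\mathrm{span}\{\ell_m:m\ge N\}=\mathrm{span}\{\ell_m:m\ge2N+1\}$; then $D=\mathrm{ad}(\ell_N)|_{Q_{\infty}}$ sends $\ell_m\mapsto(m-N)\ell_{m+N}$, so its image is $\mathrm{span}\{\ell_k:k\ge2N+1\}=Q_{\infty}$, i.e. $D$ is surjective, while $D(\ell_N)=0$ makes $D$ non-injective. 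A surjective, non-injective endomorphism of a finite-dimensional space cannot exist, so $Q_{\infty}=0$ and $Q$ is nilpotent. This is precisely where the one-sided grading of $\mathfrak{w}$ (no degree-zero part) is used, and it is what rules out a semisimple quotient such as $\mathfrak{sl}_2$.

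Granting this, solvability of the even image follows: with $\mathcal{H}=\mathrm{span}\{H_m:m\ge1\}$, the image $\rho(\mathcal{H})$ is an abelian ideal of the even image whose quotient is a finite-dimensional quotient of $\mathfrak{w}$, hence nilpotent, and an extension of solvable by solvable is solvable. By Lie's theorem the even image is simultaneously triangularizable on $V_{\bar0}$ and $V_{\bar1}$, so every element of its derived algebra acts nilpotently; in particular $\rho(L_m)$ for $m\ge3$ and $\rho(H_m)$ for $m\ge2$ are nilpotent operators, since each is a bracket. For the odd generators, relation \eqref{e:re7} gives $\rho(G_r)\rho(G_s)+\rho(G_s)\rho(G_r)=0$, whence every $\rho(G_r)$ squares to zero and any odd operator $\sum_r c_r\rho(G_r)$ also squares to zero, so all odd elements act nilpotently as well.

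Finally I would assemble these facts. Let $\mathfrak{I}=\mathrm{span}\{L_m\,(m\ge3),\,H_m\,(m\ge2),\,G_r\,(\text{all }r)\}$; a direct check on \eqref{e:re1}--\eqref{e:re7} shows $\mathfrak{I}$ is an ideal of $\mathfrak{q}_{\epsilon}^{+}$, and by the previous paragraph $\rho(\mathfrak{I})$ is a finite-dimensional subsuperalgebra of $\mathfrak{gl}(V)$ all of whose homogeneous elements are nilpotent. The super Engel theorem then yields $0\neq v\in V$ with $\rho(\mathfrak{I})v=0$, and since $\mathfrak{I}$ is an ideal the space $\{v:\mathfrak{I}v=0\}$ is a nonzero submodule; simplicity forces $\mathfrak{I}V=0$. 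Thus the action factors through $\mathfrak{q}_{\epsilon}^{+}/\mathfrak{I}=\mathrm{span}\{L_1,L_2,H_1\}$, which is abelian (all brackets of $L_1,L_2,H_1$ land in $\mathfrak{I}$) and purely even. Hence $V_{\bar0}$ and $V_{\bar1}$ are submodules, $V$ is concentrated in one parity, and it is a simple module over a commutative algebra of operators; a common eigenvector spans a submodule, forcing $\dim V=1$. The remaining bookkeeping—that the scalars then define $\psi$ and $V\cong A(\psi)$—is routine. The only genuinely delicate points are the nilpotency-of-quotient argument above and invoking the super versions of Lie's and Engel's theorems, for which the vanishing bracket \eqref{e:re7} of the odd part (making odd operators square-zero) is essential.
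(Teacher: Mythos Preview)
Your proof is correct and takes a genuinely different route from the paper's. The paper's argument is shorter but not self-contained: it restricts $V$ to the positive Virasoro subalgebra $\mathrm{Vir}^{+}=\mathrm{span}\{L_m:m\ge1\}$, invokes an external result (Mazorchuk--Zhao \cite{19}, Proposition~12) to conclude that a simple finite-dimensional $\mathrm{Vir}^{+}$-submodule $W\subseteq V$ is one-dimensional, and then argues separately---using commutativity of the $H_n$'s among themselves and of the $G_r$'s among themselves---that the line $W=\mathbb{C}w$ is already $\mathfrak{q}_{\epsilon}^{+}$-stable, whence $V=W$ by simplicity. Your approach replaces the citation by a direct proof that every finite-dimensional quotient of the positive Witt algebra is nilpotent (the surjective/non-injective $\mathrm{ad}(\ell_N)$ trick on the stabilized lower-central term), then proceeds structurally: Lie's theorem for the solvable even image forces the commutator generators $L_m\ (m\ge3)$, $H_m\ (m\ge2)$ to act nilpotently, relation \eqref{e:re7} makes every odd operator square-zero, and super Engel applied to the ideal $\mathfrak{I}$ kills it on $V$, reducing to the abelian even quotient spanned by $L_1,L_2,H_1$. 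What you gain is a self-contained argument that does not depend on \cite{19} and that makes the mechanism (passage to an abelian quotient) explicit; what the paper's approach buys is brevity, at the cost of the external reference and a ``Schur's Lemma'' step for the $H_n$-action whose justification is left rather terse.
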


\begin{proof}
Assume that $V$ is a finite-dimensional simple $\mathfrak{q}_{\epsilon}^+$-module. Set Vir$^+={\rm span}_{\mathbb{C}}\{L_m \mid m \in \mathbb{Z}_+ \}$. We can view $V$  as a finite-dimensional ${\rm Vir}^+$-module. Let $W$ be a simple ${\rm Vir}^+$-submodule of $V$. According to \cite[Prop.12]{19}, $W$ must be one-dimensional. Hence, $W=\mathbb{C} w$  and we have $L_nw=c_nw$ for all $n\in\mathbb{Z}_+$, where $c_1, c_2\in\mathbb{C}^{*}$ and $c_i=0$ for all $i\ge3$.

Let $\mathfrak{q}_{\epsilon}^H$ be the  subalgebra of $\mathfrak{q}_{\epsilon}^+$ generated by $H_n$ and $\mathfrak{q}_{\epsilon}^G$ be the subalgebra of $\mathfrak{q}_{\epsilon}^+$ generated by $G_n$ for $n \in \mathbb{Z}_+-\epsilon$. Let $U=U(\mathfrak{q}_H)w$ be a $\mathfrak{q}_{\epsilon}^H$-submodule of $V$,  Since $U$ is finite-dimensional, by Definition \ref{D 2.2}, we have $H_nH_m w=H_mH_n w$ for all $m,n \in \mathbb{Z}_+$. By
Schur's Lemma (see \cite{4}), we  deduce that $H_n$ acts by the scalar $d_n$ on $U$ for all $n \in \mathbb{Z}_+$. Therefore, $U=\mathbb{C}w$. As a consequence, we have $H_n w=d_n w$ for all $n\in\mathbb{Z}_+$, where $d_1 \in\mathbb{C}$ and $d_i=0$  for $i\geq 2$. In fact, for $i\geq2$,
$$
H_i w=[L_{i-1},H_1]w=c_{i-1}d_1w-d_1c_{i-1}w=0.
$$
Similarly, we can show that $U(\mathfrak{q}_{\epsilon}^G)w=\mathbb{C}w$. Consequently, $W$ is a nonzero $\mathfrak{q}_{\epsilon}^+$-submodule of $V$, which implies $$V=W=\mathbb{C}w\cong A(\psi).$$
\end{proof}

Similar to Corollary 3.5 and Corollary 4.11 in \cite{23}, we can establish the following results.
\begin{coro}\label{C 4.11}
Let $V$ be a finite-dimensional simple module over $\mathfrak{q}_{\epsilon}^+$, where the element $C$ acts as a scalar $\xi\in \mathbb{C}$. Define the induced $\mathfrak{q}_{\epsilon}$-module $M_{\epsilon}(V,\xi)=U(\mathfrak{q}_{\epsilon})\otimes_{U(\mathfrak{q}_{\epsilon}^+\oplus\mathbb{C}C)}V$. Then, there exists a Lie superalgebra homomorphism $\psi : \mathfrak{q}_{\epsilon}^+ \to \mathbb{C}$ such that $M_{\epsilon}(V,\xi)\cong L_{\epsilon}(\psi, \xi)$ as modules over $\mathfrak{q}_{\epsilon}$.
\end{coro}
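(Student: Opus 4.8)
The plan is to deduce Corollary~\ref{C 4.11} directly from Theorem~\ref{P 4.10} together with the universal property established in Lemma~\ref{L 2.5}. By Theorem~\ref{P 4.10}, since $V$ is a finite-dimensional simple $\mathfrak{q}_{\epsilon}^+$-module, we have $V\cong A(\psi)=\mathbb{C}w_\psi$ for a unique Lie superalgebra homomorphism $\psi:\mathfrak{q}_{\epsilon}^+\to\mathbb{C}$, where $xw_\psi=\psi(x)w_\psi$ for all $x\in\mathfrak{q}_{\epsilon}^+$. This $\psi$ is precisely the homomorphism whose existence the corollary asserts. First I would record that under this identification the action of $\mathfrak{q}_{\epsilon}^+\oplus\mathbb{C}C$ on $V$ factors through the one-dimensional module $\mathbb{C}_\psi$ with $C$ acting by the scalar $\xi$; in other words, the $U(\mathfrak{q}_{\epsilon}^+\oplus\mathbb{C}C)$-module $V$ coincides with $\mathbb{C}_\psi$ after imposing $C=\xi$.

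Next I would compare the two induced modules. On the one hand, $M_{\epsilon}(V,\xi)=U(\mathfrak{q}_{\epsilon})\otimes_{U(\mathfrak{q}_{\epsilon}^+\oplus\mathbb{C}C)}V$; on the other hand, $M_{\epsilon}(\psi)=U(\mathfrak{q}_{\epsilon})\otimes_{U(\mathfrak{q}_{\epsilon}^+)}\mathbb{C}_\psi$ and $L_{\epsilon}(\psi,\xi)=M_{\epsilon}(\psi)/(C-\xi)M_{\epsilon}(\psi)$. The natural approach is to observe that inducing from $\mathfrak{q}_{\epsilon}^+$ and then killing $C-\xi$ is the same as inducing from $\mathfrak{q}_{\epsilon}^+\oplus\mathbb{C}C$ on the module where $C$ already acts by $\xi$; this is the standard transitivity of induction combined with the right-exactness of the functor $U(\mathfrak{q}_{\epsilon})\otimes_{?}(-)$. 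Concretely, the image $\bar{w}$ of the cyclic Whittaker vector $w=1\otimes 1$ in $L_{\epsilon}(\psi,\xi)$ is a cyclic Whittaker vector of type $\psi$ on which $C$ acts by $\xi$, so $L_{\epsilon}(\psi,\xi)$ is a Whittaker module of type $\psi$ generated by $\bar w$, and likewise the image of $1\otimes w_\psi$ in $M_{\epsilon}(V,\xi)$ generates $M_{\epsilon}(V,\xi)$ as a cyclic Whittaker vector of type $\psi$ with $C$ acting by $\xi$.

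I would then invoke Lemma~\ref{L 2.5}(i) in both directions: there is a surjective $\mathfrak{q}_{\epsilon}$-homomorphism from one module onto the other matching the distinguished cyclic Whittaker vectors, and its inverse is obtained the same way, giving mutually inverse surjections and hence an isomorphism $M_{\epsilon}(V,\xi)\cong L_{\epsilon}(\psi,\xi)$. Alternatively, and perhaps more cleanly, both modules have a PBW-type basis furnished by $U(\mathfrak{b}_{\epsilon}^-)/(C-\xi)$ acting freely on the respective cyclic vector, and the map sending one cyclic vector to the other carries basis to basis; this bypasses any well-definedness subtlety by exhibiting an explicit bijection of bases.

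The main obstacle I anticipate is purely bookkeeping rather than conceptual: one must verify carefully that the $\mathbb{Z}_2$-grading and the parity conventions are respected when identifying $V\cong\mathbb{C}_\psi$, and that the relation $C=\xi$ is compatibly imposed on both sides so that the two induced modules are compared over the same base ring $U(\mathfrak{q}_{\epsilon}^+\oplus\mathbb{C}C)$ rather than over $U(\mathfrak{q}_{\epsilon}^+)$ alone. Once the transitivity-of-induction identity $U(\mathfrak{q}_{\epsilon})\otimes_{U(\mathfrak{q}_{\epsilon}^+\oplus\mathbb{C}C)}\mathbb{C}_\psi\cong M_{\epsilon}(\psi)/(C-\xi)M_{\epsilon}(\psi)$ is set up correctly, the remaining steps are formal applications of Lemma~\ref{L 2.5} and the freeness statement $uw\neq0$ for $0\neq u\in U(\mathfrak{b}_{\epsilon}^-)$ recorded after the definition of $M_{\epsilon}(\psi)$.
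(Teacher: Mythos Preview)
The paper does not supply its own proof of this corollary: it simply states that the result follows ``similar to Corollary~3.5 and Corollary~4.11 in \cite{23}'' and then records the statement. Your proposal therefore fills in what the paper leaves implicit, and the core of your argument---identify $V$ with $A(\psi)$ via Theorem~\ref{P 4.10}, then use transitivity of induction to see that inducing from $U(\mathfrak{q}_{\epsilon}^+\oplus\mathbb{C}C)$ with $C$ already acting by $\xi$ agrees with inducing from $U(\mathfrak{q}_{\epsilon}^+)$ and then passing to the quotient by $(C-\xi)$---is exactly the expected argument and is correct. Concretely, both modules are $U(\mathfrak{q}_{\epsilon})$ modulo the left ideal generated by $\{x-\psi(x):x\in\mathfrak{q}_{\epsilon}^+\}$ together with $C-\xi$, which makes the isomorphism immediate.

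One small caution: your invocation of Lemma~\ref{L 2.5}(i) ``in both directions'' does not work as written. That lemma only produces surjections \emph{from} the universal module $M_{\epsilon}(\psi)$ onto an arbitrary Whittaker module of type $\psi$; it does not give a surjection from $L_{\epsilon}(\psi,\xi)$ onto $M_{\epsilon}(V,\xi)$ or vice versa without first establishing the appropriate universal property for one of them among Whittaker modules on which $C$ acts by $\xi$. Fortunately you already note the cleaner route: the PBW bases $\{L_{-\lambda}H_{-\omega}G_{-\mu-\epsilon}\,\bar w\}$ and $\{L_{-\lambda}H_{-\omega}G_{-\mu-\epsilon}\,(1\otimes w_\psi)\}$ match under the obvious map, and this (or equivalently the left-ideal description above) gives the isomorphism directly without any appeal to Lemma~\ref{L 2.5}.
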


\begin{coro}\label{C 3.3}
Let $V$ be a finite-dimensional module over $\mathfrak{q}_{\epsilon}^+$, where the element $C$ acts as a scalar $\xi\in \mathbb{C}$. Then $M_{\epsilon}(V,\xi)$ is a simple module for $\mathfrak{q}_{\epsilon}$ if and only if $V$ is a simple module for $\mathfrak{q}_{\epsilon}^+$.
\end{coro}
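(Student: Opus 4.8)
The plan is to treat the two implications separately, using Theorem~\ref{P 4.10} and Corollary~\ref{C 4.11} to transport the harder (reverse) direction into a simplicity statement for the Whittaker quotient $L_\epsilon(\psi,\xi)$; the forward direction is formal, and essentially all the difficulty sits in the reverse one.

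For the forward implication I would argue by contraposition using exactness of induction. By the PBW theorem $U(\mathfrak{q}_\epsilon)$ is free as a right $U(\mathfrak{q}_\epsilon^+\oplus\mathbb{C}C)$-module, so $M_\epsilon(-,\xi)=U(\mathfrak{q}_\epsilon)\otimes_{U(\mathfrak{q}_\epsilon^+\oplus\mathbb{C}C)}(-)$ is exact. Thus if $V$ has a submodule $0\neq V'\subsetneq V$, the induced map $M_\epsilon(V',\xi)\to M_\epsilon(V,\xi)$ is injective with image $U(\mathfrak{q}_\epsilon)(1\otimes V')$; comparing the $V'$- and $V$-layers in the PBW basis $\{L_{-\lambda}H_{-\omega}G_{-\mu-\epsilon}\otimes v\}$ shows this image is a proper nonzero submodule, so $M_\epsilon(V,\xi)$ is not simple. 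Contrapositively, simplicity of $M_\epsilon(V,\xi)$ forces that of $V$.

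For the reverse implication, assume $V$ is simple. Being finite dimensional, Theorem~\ref{P 4.10} gives $V\cong A(\psi)$ for some $\psi$ (satisfying $\psi(L_1),\psi(L_2),\psi(H_1)\neq0$ under the standing convention), and Corollary~\ref{C 4.11} identifies $M_\epsilon(V,\xi)\cong L_\epsilon(\psi,\xi)$. It then suffices to prove $L_\epsilon(\psi,\xi)$ simple. The standard reduction is this: for any nonzero submodule $N$, an element $0\neq v\in N$ of minimal ${\rm max~deg}$ satisfies $(x-\psi(x))v=0$ for every $x\in\mathfrak{q}_\epsilon^+$, since $(x-\psi(x))v$ again lies in $N$ but has strictly smaller ${\rm max~deg}$ (one has $x\cdot L_{-\lambda}H_{-\omega}G_{-\mu-\epsilon}\bar w=\psi(x)L_{-\lambda}H_{-\omega}G_{-\mu-\epsilon}\bar w$ plus lower-degree terms), and so must vanish by minimality. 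Hence such a $v$ is a Whittaker vector, and the whole problem reduces to showing that the space of Whittaker vectors of $L_\epsilon(\psi,\xi)$ is exactly $\mathbb{C}\bar w$: this would give $v\in\mathbb{C}\bar w$, hence $\bar w\in N$ and $N=L_\epsilon(\psi,\xi)$. To compute the Whittaker vectors I would first use $L_1,L_2,H_1,G_{1-\epsilon}$ and the nonsingularity of $\psi$ to show, via a degree count on the top homogeneous part, that a Whittaker vector has ${\rm max~deg}=0$, hence lies in $\mathrm{span}\{L_0^aH_0^bG_0^c\bar w\}$; then I would strip off the powers of $L_0$ using $(L_1-\psi(L_1))(L_0^a\bar w)=\psi(L_1)\big((L_0-1)^a-L_0^a\big)\bar w$, which lowers $\max_{L_0}$ by one with nonzero leading coefficient.

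The main obstacle is what remains after this, namely controlling the contributions of $H_0$ and $G_0$. Since $[L_m,H_0]=[H_m,H_0]=0$, the even generators $L_1,L_2,H_1$ commute with $H_0$, so none of the operators $x-\psi(x)$ built from the even positive part can lower a power of $H_0$; the same difficulty occurs for $G_0$. Deciding whether $\mathrm{span}\{H_0^bG_0^c\bar w\}$ contributes genuine Whittaker vectors beyond $\mathbb{C}\bar w$ therefore forces one to bring in the odd relations $[H_m,G_p]=G_{p+m}$ and $[L_m,G_p]=(p-\tfrac m2)G_{p+m}$ together with the central value $\xi$, and it is precisely here that the content of the irreducibility criterion is decided. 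By comparison, the forward implication, the passage from $V$ to $L_\epsilon(\psi,\xi)$ through Theorem~\ref{P 4.10} and Corollary~\ref{C 4.11}, and the reduction of ${\rm max~deg}$ to zero are all routine.
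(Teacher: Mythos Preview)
Your forward implication is correct, and so is the reduction of the reverse implication, via Theorem~\ref{P 4.10} and Corollary~\ref{C 4.11}, to the simplicity of $L_\epsilon(\psi,\xi)$. The obstacle you flag at the end, however, is not merely a difficulty to be overcome: it is fatal to the argument. The paper computes the Whittaker vectors of $L_\epsilon(\psi,\xi)$ in Theorem~\ref{P 3.7}, and they span a \emph{two}-dimensional space $\mathrm{span}_{\mathbb{C}}\{\bar w,\,G_{-\epsilon}\bar w\}$, not $\mathbb{C}\bar w$. The vector $G_{-\epsilon}\bar w$ is a genuine extra Whittaker vector because $[L_m,G_{-\epsilon}]$, $[H_m,G_{-\epsilon}]$ and $[G_r,G_{-\epsilon}]$ all lie in $\mathfrak{q}_\epsilon^+$ and are annihilated by $\psi$. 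For $\epsilon=\tfrac12$ this vector already has positive degree $\tfrac12$, so your assertion that a Whittaker vector must have ${\rm max~deg}=0$ fails; for $\epsilon=0$ it is precisely the $G_0\bar w$ term in your list $\{H_0^bG_0^c\bar w\}$ that no element of $\mathfrak{q}_\epsilon^+$ can strip off.

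The consequence, recorded immediately after Theorem~\ref{P 3.7}, is that $\langle G_{-\epsilon}\bar w\rangle$ is a proper nonzero submodule of $L_\epsilon(\psi,\xi)$; the simple object is the further quotient $\widetilde{L_\epsilon(\psi,\xi)}=L_\epsilon(\psi,\xi)/\langle G_{-\epsilon}\bar w\rangle$ (Corollary~\ref{C 4.6}), not $L_\epsilon(\psi,\xi)$ itself. Hence your strategy of deducing Corollary~\ref{C 3.3} from the simplicity of $L_\epsilon(\psi,\xi)$ cannot work in the nonsingular regime. Note that the paper does not actually give a proof of Corollary~\ref{C 3.3}; it only refers to \cite{23}. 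In view of Corollary~\ref{C 4.11} and Theorem~\ref{P 3.7}, the statement as written is in tension with the rest of the paper, since $A(\psi)$ is a simple $\mathfrak{q}_\epsilon^+$-module while $M_\epsilon(A(\psi),\xi)\cong L_\epsilon(\psi,\xi)$ is not simple; so the issue you ran into is a real one and not just a gap in your argument.
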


\section{Whittaker vectors in $M_{\epsilon}(\psi)$ and $L_{\epsilon}(\psi, \xi)$}

Consider the fixed Lie superalgebra homomorphism $\psi : \mathfrak{q}_{\epsilon}^+ \to \mathbb{C}$ such that $\psi(L_1), \psi(L_2), \psi(H_1) \neq 0$. Let $w = 1 \otimes 1 \in M_{\epsilon}(\psi)$ be the Whittaker vector as defined above. In this section, we will determine the Whittaker vectors in the modules $M_{\epsilon}(\psi)$ and $L_{\epsilon}(\psi, \xi)$.

\begin{lemm}\label{L 3.1}
Let $w$ be an arbitrary vector of $M_{\epsilon}(\psi)$, and let $w'=u w$ for some $u \in U(\mathfrak{b}_{\epsilon}^-)$. Then, for any $x \in \mathfrak{q}_{\epsilon}^+$,  we have
$$
(x-\psi(x))w'=[x, u]w.
$$
\end{lemm}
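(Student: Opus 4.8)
The plan is to exploit the defining module axiom for a Lie superalgebra (Definition \ref{D 2.1}) together with the fact that $w'=uw$ is obtained by acting with an element $u\in U(\mathfrak{b}_\epsilon^-)$. The statement $(x-\psi(x))w'=[x,u]w$ is really a statement about how $x\in\mathfrak{q}_\epsilon^+$ commutes past $u$ when applied to the cyclic Whittaker vector $w$, so the natural strategy is to compute $xw'=xuw$ by moving $x$ to the right using the commutator identity $xu=ux+[x,u]$ (extended from $\mathfrak{q}_\epsilon$ to $U(\mathfrak{q}_\epsilon)$), and then to use that $w$ is a Whittaker vector so that $xw=\psi(x)w$.

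First I would record that $w=1\otimes 1$ is a Whittaker vector of type $\psi$, so $xw=\psi(x)w$ for all $x\in\mathfrak{q}_\epsilon^+$; note the lemma as stated writes $w$ for an arbitrary vector, but the intended reading (consistent with the section's setup) is that $w$ is the fixed cyclic Whittaker vector, so I would state this explicitly at the outset. Then, writing $xuw=uxw+[x,u]w$ in $U(\mathfrak{q}_\epsilon)$, I would substitute $xw=\psi(x)w$ to get $xw'=xuw=u\psi(x)w+[x,u]w=\psi(x)uw+[x,u]w=\psi(x)w'+[x,u]w$, since $\psi(x)$ is a scalar and commutes with $u$. Rearranging gives $(x-\psi(x))w'=[x,u]w$, which is exactly the claim.

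The one genuine subtlety, and the step I expect to be the main obstacle, is the passage from the bracket in the Lie superalgebra to the commutator in the associative algebra $U(\mathfrak{q}_\epsilon)$ when odd elements are involved. For homogeneous $x,y$ the associative-algebra product satisfies $xy-(-1)^{p(x)p(y)}yx=[x,y]$, so the identity $xu=ux+[x,u]$ must be interpreted with the appropriate sign and, more importantly, $[x,u]$ for $u$ a product of odd generators must be computed via the super-Leibniz rule rather than the naive bracket. I would therefore either restrict to homogeneous $u$ and invoke the super-Jacobi/super-Leibniz extension of the adjoint action $[x,\cdot]$ to $U(\mathfrak{b}_\epsilon^-)$, or observe that the identity $xw'=\psi(x)w'+[x,u]w$ holds componentwise and extends by linearity; the signs from the $\mathbb{Z}_2$-grading are absorbed consistently because $\psi(x)$ is central. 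Once the adjoint action of $x$ on $U(\mathfrak{q}_\epsilon)$ is correctly set up as a (super)derivation, the computation is a single line and no further work is needed.
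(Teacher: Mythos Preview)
Your approach is correct and essentially identical to the paper's: the paper's proof is the single line $[x,u]w=xuw-uxw=xuw-\psi(x)uw=(x-\psi(x))w'$, which is exactly your computation read in the reverse direction. You are actually more careful than the paper about the super sign issue; the cleanest way to resolve it is to note that $\psi$ vanishes on odd elements (since $\mathbb{C}$ is purely even, or by Lemma~\ref{L 2.4}), so when $x$ is odd the term $uxw=u\psi(x)w$ is zero and the $\pm$ sign in $[x,u]=xu\mp ux$ is irrelevant.
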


\begin{proof}
For $x\in\mathfrak{q}_{\epsilon}^+$, by Lemma \ref{L 2.4} we have
$$
[x,u]w=xuw- uxw=xuw-\psi(x)uw=(x-\psi(x))uw=(x-\psi(x))w'.
$$
\end{proof}

Note that if $w'$ is a Whittaker vector, then $(x-\psi(x))w'=0$ for $x\in\mathfrak{q}_{\epsilon}^+$.

\begin{lemm}\label{L 3.4}
For any $(\lambda, \omega, \mu) \in \mathcal{P}\times \mathcal{P}\times \mathcal{Q}$, $m, k\in \mathbb{N}$, we have the following.
\begin{itemize}
\item[(i)]
\begin{flushleft}
${\rm max~deg}([L_m,L_{-\lambda}H_{-\omega}G_{-\mu-\epsilon}]w)\leq|\lambda+\omega+\mu+\epsilon|-m+2,$
\end{flushleft}
\begin{flushleft}
${\rm max~deg}([L_m,H_{-\omega}G_{-\mu-\epsilon}]w)\leq|\omega+\mu+\epsilon|-m+1;$
\end{flushleft}
\item[(ii)]If $\omega(i)=\lambda(i)=0$ for all $0\leq i\leq k,$ then
$$
{\rm max~deg}([L_{k+1},L_{-\lambda}H_{-\omega}G_{-\mu-\epsilon}]w) \leq|\lambda+\omega+\mu+\epsilon|-k-1;
$$
\item[(iii)]If $\omega(j)=\lambda(i)=0$ for all $0\leq i\leq k,$ $0\leq j<k$ and $\omega(k)\neq0,$ then
$$
[L_{k+1},L_{-\lambda}H_{-\omega}G_{-\mu-\epsilon}]w=v-k\psi(H_1)\omega(k)L_{-\lambda}H_{-\omega{'}}G_{-\mu-\epsilon}w,
$$
where ${\rm max~deg}(v)<|\lambda+\omega+\mu+\epsilon|-k$ and $\omega{'}$ satisfies that
$\omega{'}(i)=\omega(i)$ for all $i\neq k$ and $\omega{'}(k)=\omega(k)-1.$
\end{itemize}
\end{lemm}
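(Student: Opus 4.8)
The plan is to expand each bracket using the Leibniz rule for the even derivation $\mathrm{ad}(L_m)$, evaluate the resulting elementary brackets
\begin{gather*}
[L_m,L_{-a}]=-(a+m)L_{m-a}+\tfrac{1}{12}\delta_{m,a}(a-a^{3})C,\\
[L_m,H_{-a}]=-aH_{m-a},\qquad [L_m,G_{-a-\epsilon}]=-(a+\epsilon+\tfrac{m}{2})G_{m-a-\epsilon}
\end{gather*}
coming from \eqref{e:re1}, \eqref{e:re3}, \eqref{e:re4}, and then sort the resulting monomials by the index of the operator that $L_m$ produces. The organizing principle throughout is Lemma~\ref{L 2.4}: once a positive-index operator is commuted all the way to the right onto $w$ it contributes the scalar $\psi$, which vanishes except on $L_1,L_2,H_1$, and this is exactly what caps the possible gain in degree.

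Write $d=|\lambda+\omega+\mu+\epsilon|$. In each Leibniz term exactly one factor is modified, and I would split on the index $q$ of the new operator. If $q\le 0$ the term is a product of non-positive operators lying in $U(\mathfrak{b}_{\epsilon}^-)$; since reordering into PBW form preserves the $\mathrm{ad}(L_0)$-weight, and weight equals minus the degree on such monomials, its degree is exactly $d-m$. If $q>0$ the new operator is positive and must be commuted rightward onto $w$; each bracket it forms with a lowering factor preserves the combined degree while lowering its index, so that when a descendant of index $p$ finally reaches $w$ the surviving monomial has degree $d-m+p$. By Lemma~\ref{L 2.4} this survives only for $p\le 2$, which gives the first bound of (i). For the second bound I observe that with no $L_{-\lambda}$-factor present, neither $\mathrm{ad}(L_m)$ nor the internal brackets among the $H$'s and $G$'s ever produce an $L$-operator (they stay within the $H,G$ or central operators), so the only surviving positive hit is $H_1$ with $p=1$, yielding the sharper $+1$.

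For (ii), the hypothesis $\lambda(i)=\omega(i)=0$ for $i\le k$ forces every part of $\lambda$ and $\omega$ to exceed $k=m-1$, so $[L_{k+1},L_{-\lambda_i}]$ and $[L_{k+1},H_{-\omega_j}]$ have non-positive index and create no positive $L$ or $H$, while the $G$-factors yield only $G$-operators, on which $\psi$ vanishes. Hence no positive operator survives on $w$ and every term has degree at most $d-(k+1)=d-k-1$. Part (iii) runs identically except that $\omega(k)\neq 0$ now activates the single productive bracket $[L_{k+1},H_{-k}]=-kH_1$, whose $H_1$ is the one positive operator with $\psi(H_1)\neq 0$. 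Each of the $\omega(k)$ identical factors $H_{-k}$ contributes such a term; commuting $H_1$ onto $w$ produces $\psi(H_1)$ times $L_{-\lambda}H_{-\omega'}G_{-\mu-\epsilon}w$ of degree exactly $d-k$, and assembling the scalars gives the stated leading term $-k\psi(H_1)\omega(k)L_{-\lambda}H_{-\omega'}G_{-\mu-\epsilon}w$, with everything else collected into $v$.

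The main obstacle is the fine bookkeeping in (iii). One must verify that the coefficient is exactly $-k\psi(H_1)\omega(k)$, tracking the factor $-k$ from \eqref{e:re3}, the multiplicity $\omega(k)$, and the sign, and, more delicately, that every remaining contribution lands strictly below degree $d-k$. This requires showing that the commutator corrections generated while moving $H_1$ past the remaining $H$- and $G$-factors (for instance $[H_1,H_{-1}]\in\mathbb{C}C$ when $k=1$, or $[H_1,G_{-s-\epsilon}]=G_{1-s-\epsilon}$) each strictly lower the degree, and that the $L$-factors, the higher $H$-factors, and all $G$-factors contribute nothing at degree $d-k$. The earlier parts are comparatively routine once the index-based classification is in place; it is the exact identification of the top-degree term in (iii) that demands the most care.
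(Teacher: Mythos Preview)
Your proposal is correct and follows essentially the same approach as the paper: both arguments expand $[L_m,\,\cdot\,]$ via the Leibniz rule, sort the resulting terms according to whether the new factor has positive or non-positive index, and then invoke Lemma~\ref{L 2.4} (i.e.\ $\psi(L_i)=\psi(H_j)=\psi(G_r)=0$ for $i>2$, $j>1$, $r\ge 1-\epsilon$) to cap the degree gain. Your identification of the single productive bracket $[L_{k+1},H_{-k}]=-kH_1$ in part~(iii), the multiplicity count $\omega(k)$, and the verification that the $[H_1,H_{-1}]$ and $[H_1,G_{-s-\epsilon}]$ corrections fall strictly below degree $d-k$ match the paper's computation (indeed you are slightly more careful than the paper, which suppresses these lower-order corrections).
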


\begin{proof}
To prove (i), suppose $\lambda=(\lambda_1,\lambda_2,\cdots,\lambda_s)$, $\omega=(\omega_1,\omega_2,\cdots,\omega_r)$, $\mu=(\mu_1,\mu_2,\cdots,\mu_p)$. Note that $[E_m,-]$ is a derivation of U($\mathfrak{q}_{\epsilon}$) for any $$E_m\in \{L_m, H_m, G_{m-\epsilon}\mid m \in \mathbb{Z}_+\},$$ we have
$$
\begin{aligned}
{[L_m,L_{-\lambda}H_{-\omega}G_{-\mu-\epsilon}]}&{=\sum_{i=1}^{s}L_{-\lambda_s}\cdots[L_m,L_{-\lambda_{i}}]\cdots
L_{-\lambda_1}H_{-\omega}G_{-\mu-\epsilon}}\\
&{+\sum_{j=1}^{r}L_{-\lambda}H_{-\omega_r}\cdots[L_m,H_{-\omega_{j}}]\cdots
H_{-\omega_1}G_{-\mu-\epsilon}}\\
&{+\sum_{k=1}^{p}L_{-\lambda}H_{-\omega}G_{-\mu_p-\epsilon}\cdots[L_m,G_{-\mu_k-\epsilon}]\cdots G_{-\mu_1-\epsilon}}\\
&{=\sum p_{\lambda',\omega',\mu'}(C)L_{-\lambda'}H_{-\omega'}G_{-\mu'-\epsilon}}\\
&{+\sum p_{\lambda'',\omega'',\mu'',E_n}(C)L_{-\lambda''}H_{-\omega''}G_{-\mu''-\epsilon}E_n},
\end{aligned}
$$
where $\lambda',\omega',\mu',\lambda'',\omega'',\mu''$ and $n$ satisfy
$$
-|{\lambda{'}}+\omega'+\mu{'}+\epsilon|= -|{\lambda{''}}+\omega''+\mu{''}+\epsilon|+n
=-|\lambda+\omega+\mu+\epsilon|+m,
$$
and $E_n=L_n,H_n$ or $G_n$. Since $L_iw=H_jw=G_kw=0$ for $i>2, j>1$ and $k>0$, we have
$$
\begin{aligned}
&{{\rm max~deg}([L_m,L_{-\lambda}H_{-\omega}G_{-\mu-\epsilon}]w)}\\
&{={\rm max~deg}(\sum p_{\lambda'',\omega'',\mu'',E_n}(C)L_{-\lambda''}H_{-\omega''}G_{-\mu''-\epsilon}E_nw)}\\
&{\leq |\lambda+\omega+\mu+\epsilon|-m+2}.
\end{aligned}
$$
The second inequality can be proved in a similar way.

To prove (ii), note that if $(\lambda,\omega)=(\bar{0},\bar{0})$, then
$$
[L_{k+1},L_{-\lambda}H_{-\omega}G_{-\mu-\epsilon}]w=[L_{k+1},G_{-\mu-\epsilon}]w,
$$
which implies that
$$
{\rm max~deg}([L_{k+1},L_{-\lambda}H_{-\omega}G_{-\mu-\epsilon}]w)\leq|\lambda+\omega+\mu+\epsilon|-k-1.
$$

Next, we assume $(\lambda,\omega)\neq(\bar{0},\bar{0})$ and write
$$
L_{-\lambda}H_{-\omega}=L_{-\lambda_s}\cdots L_{-\lambda_1}H_{-\omega_r}\cdots H_{-\omega_1},
$$
since $k<\omega_j, \lambda_i$ for all $1\leq i\leq s, 1\leq j\leq r,$ we have
$$
\begin{aligned}
{[L_{k+1},L_{-\lambda}H_{-\omega}G_{-\mu-\epsilon}]w}&{=[L_{k+1},L_{-\lambda}]H_{-\omega}G_{-\mu-\epsilon}w
+L_{-\lambda}[L_{k+1},H_{-\omega}]G_{-\mu-\epsilon}w}\\
&{+L_{-\lambda}H_{-\omega}[L_{k+1},G_{-\mu-\epsilon}]w\in U(\mathfrak{b}_{\epsilon}^-)w}
\end{aligned}
$$
and
$$
{\rm max~deg}([L_{k+1},L_{-\lambda}H_{-\omega}G_{-\mu-\epsilon}]w)=|\lambda+\omega+\mu+\epsilon|-k-1.
$$
Thus, (ii) follows.

For (iii), observe that
$$
\begin{aligned}
{[L_{k+1},L_{-\lambda}H_{-\omega}G_{-\mu-\epsilon}]w}&{=[L_{k+1},L_{-\lambda}]H_{-\omega}G_{-\mu-\epsilon}w +L_{-\lambda}[L_{k+1},H_{-\omega'}]H_{-k}^{\omega(k)}G_{-\mu-\epsilon}w}\\
&{+L_{-\lambda}H_{-\omega'}[L_{k+1},H_{-k}^{\omega(k)}]G_{-\mu-\epsilon}w+L_{-\lambda}H_{-\omega}[L_{k+1},G_{-\mu-\epsilon}]w}\\
\end{aligned}
$$
where $\omega{'}$ is an element in $\mathcal{P}$ such that $\omega{'}(k)=0,\omega{'}(i)=\omega(i)$ for  $i\neq k$. Based on the assumption about $k$, we see that $[L_{k+1},H_{-\omega{'}}]w,$ $[L_{k+1},L_{-\lambda}]w$, $[L_{k+1},G_{-\mu-\epsilon}]w$ belong to $U(\mathfrak{b}_{\epsilon}^{-})w$. Furthermore, we have $[L_{k+1},H_{-k}^{\omega(k)}]=-k\omega(k)H_{-k}^{\omega(k)-1}H_1$, and
$$
\begin{aligned}
{\rm max~deg}(L_{-\lambda}[L_{k+1},H_{-\omega'}]H_{-k}^{\omega(k)}G_{-\mu-\epsilon}w)\leq |\lambda+\omega+\mu+\epsilon|-k-1,\\
{\rm max~deg}([L_{k+1},L_{-\lambda}]H_{-\omega'}H_{-k}^{\omega(k)}G_{-\mu-\epsilon}w)\leq |\lambda+\omega+\mu+\epsilon|-k-1,\\
{\rm max~deg}(L_{-\lambda}H_{-\omega}H_{-k}^{\omega(k)}[L_{k+1},G_{-\mu-\epsilon}]w)\leq |\lambda+\omega+\mu+\epsilon|-k-1.
\end{aligned}
$$
Additionally, we have
$$
L_{-\lambda}H_{-\omega}[L_{k+1},H_{-k}^{\omega(k)}]G_{-\mu-\epsilon}w=
-k\omega(k)\psi(H_1)L_{-\lambda}H_{-\omega''}G_{-\mu-\epsilon}w,
$$
where $\omega''$ is an element in $\mathcal{P}$ such that $\omega''(k)=\omega(k)-1$ and $\omega''(i)=\omega(i)$ for $i\neq k$. Thus, (iii) holds.
\end{proof}

\begin{lemm} \label{L 3.3} The following statements hold.
\
\begin{itemize}
\item[(i)]For $m\in \mathbb{Z}_{+},$ we have
$$
{\rm max~deg}([H_m,L_{-\lambda}H_{-\omega}G_{-\mu-\epsilon}]w) \leq|\lambda+\omega+\mu+\epsilon|-m+1.
$$

\item[(ii)]Suppose
$\lambda=(0^{\lambda(0)},1^{\lambda(1)},2^{\lambda(2)},\cdots)$ and
$k$ is the minimal integer such that $\lambda(k)\neq0$ then
$$
[H_{k+1},L_{-\lambda}H_{-\omega}G_{-\mu-\epsilon}]w=
v-(k+1)\lambda(k)\psi(H_1)L_{-\lambda{'}}H_{-\omega}G_{-\mu-\epsilon}w,
$$
where if $k=0$ then $v=v{'}+v{''}$ with ${\rm max~deg}(v{'})<|\lambda+\omega+\mu+\epsilon|-k$ and
 $\max_{L_0}(v{''})<\lambda(k)-1,$ if $k>0$ then
${\rm max~deg}(v)<|\lambda+\omega+\mu+\epsilon|-k;$ ${\lambda^{'}}$
satisfies $\lambda{'}(k)=\lambda(k)-1, \lambda{'}(i)=\lambda(i)$
for all $i>k.$
\end{itemize}
\end{lemm}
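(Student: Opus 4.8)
The plan is to treat both parts by expanding the bracket with the fact that $\mathrm{ad}(H_m)$ is a derivation of $U(\mathfrak{q}_{\epsilon})$, exactly as in the proof of Lemma~\ref{L 3.4}, and then to read off the degree of the resulting vector after straightening into PBW order and letting it act on $w$. The decisive structural observation, which is what separates this lemma from Lemma~\ref{L 3.4}, is that by \eqref{e:re3}, \eqref{e:re2} and \eqref{e:re5} one has $[H_a,L_b]\in\mathbb{C}H_{a+b}$, $[H_a,H_b]\in\mathbb{C}C$ and $[H_a,G_b]\in\mathbb{C}G_{a+b}$; hence bracketing with $H_m$ never creates an $L$-mode, only $H$-modes, $G$-modes and central elements. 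Since on $w$ we have $H_jw=0$ for $j\ge 2$, $G_rw=0$ for $r\ge 1-\epsilon$ and $H_1w=\psi(H_1)w$ by Lemma~\ref{L 2.4}, the only positive mode that can survive as a nonzero scalar is $H_1$, whose weight is $1$. This is precisely why the bound here is $+1$, whereas in Lemma~\ref{L 3.4}(i) the mode $L_2$ (weight $2$) could appear and forced a $+2$.

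For (i) I would expand $[H_m,L_{-\lambda}H_{-\omega}G_{-\mu-\epsilon}]$ as a derivation and straighten it into the form
$$\sum p_{\lambda',\omega',\mu'}(C)\,L_{-\lambda'}H_{-\omega'}G_{-\mu'-\epsilon}+\sum p_{\lambda'',\omega'',\mu'',E_n}(C)\,L_{-\lambda''}H_{-\omega''}G_{-\mu''-\epsilon}E_n,$$
where each commutator contributes a single generator, so that after sorting every term carries at most one positive mode $E_n$, subject to $|\lambda''+\omega''+\mu''+\epsilon|=|\lambda+\omega+\mu+\epsilon|-m+n$. Because no $L$-mode is ever produced, the only surviving $E_n$ acting nontrivially on $w$ is $H_1$, i.e. $n=1$, while the purely negative terms contribute degree $|\lambda+\omega+\mu+\epsilon|-m$. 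Taking the maximum gives ${\rm max~deg}\le|\lambda+\omega+\mu+\epsilon|-m+1$.

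For (ii) with $k>0$ I would split the derivation into its three groups. The $H_{-\omega}$-group gives, by \eqref{e:re2}, only central multiples of lower monomials, of degree $|\lambda+\omega+\mu+\epsilon|-(k+1)$; the $G$-group gives, by \eqref{e:re5}, only $G$-modes, whose positive instances kill $w$ and whose remaining instances are negative, again of degree $\le|\lambda+\omega+\mu+\epsilon|-(k+1)$. The main contribution is the $L_{-\lambda}$-group, where $[H_{k+1},L_{-\lambda_i}]=-(k+1)H_{k+1-\lambda_i}$; since $k$ is the smallest part, $\lambda_i\ge k$, so the only positive ($\psi$-absorbable) mode arises from $\lambda_i=k$, giving $H_1$. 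Each of the $\lambda(k)$ copies of $L_{-k}$ yields, upon full passage of the resulting $H_1$ out to $w$, the identical term $-(k+1)\psi(H_1)L_{-\lambda'}H_{-\omega}G_{-\mu-\epsilon}w$, while every sorting commutator (where $[H_1,L_{-k}]=-H_{1-k}$ with $1-k\le 0$) consumes that $H_1$ and hence stays strictly below degree $|\lambda+\omega+\mu+\epsilon|-k$. Summing the $\lambda(k)$ identical contributions produces the stated leading term and collects everything else into $v$.

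The main obstacle is the case $k=0$, where $[H_1,L_0]=-H_1$ by \eqref{e:re3}, so that moving the freshly produced $H_1$ rightward past the remaining $L_0$'s does \emph{not} lower the degree. Here I would use the identity $H_1L_0^{\,b}=(L_0-1)^bH_1$ to push $H_1$ onto $w$: the top power $L_0^{\lambda(0)-1}$ reconstitutes $L_{-\lambda'}$ and, summed over the $\lambda(0)$ insertion positions, gives the coefficient $-\lambda(0)\psi(H_1)=-(k+1)\lambda(k)\psi(H_1)$, while the lower powers of $L_0$ coming from $(L_0-1)^b$ retain the same top degree $|\lambda+\omega+\mu+\epsilon|$ but have strictly smaller $L_0$-content, which is exactly the piece $v''$ with $\max_{L_0}(v'')<\lambda(k)-1$; the genuinely lower-degree remainder, together with the $H_{-\omega}$- and $G$-groups and the $\lambda_i>0$ summands, forms $v'$. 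Carefully separating the two gradings, degree and $L_0$-power, in this $k=0$ bookkeeping is the delicate step, everything else being the same derivation-and-straightening computation as in Lemma~\ref{L 3.4}.
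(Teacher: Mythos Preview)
Your proposal is correct and follows essentially the same approach as the paper's proof: expand $[H_m,\,\cdot\,]$ as a derivation, straighten to PBW form, and use that bracketing with $H$ never produces an $L$-mode so that the only surviving positive mode on $w$ is $H_1$. The paper organizes part~(ii) by first factoring $L_{-\lambda}=L_{-\lambda'}L_{-k}^{\lambda(k)}$ into four summands, whereas you treat all $\lambda_i$ at once in three groups, and for $k=0$ you make the identity $H_1L_0^{\,b}=(L_0-1)^bH_1$ explicit where the paper leaves this implicit; these are cosmetic differences, not a different route.
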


\begin{proof}
To prove (i), we can write
$[H_{m},L_{-\lambda}H_{-\omega}G_{-\mu-\epsilon}]$ a linear combination of the basis of $U(\mathfrak{q}_{\epsilon})$,
$$
\begin{aligned}
{[H_m,L_{-\lambda}H_{-\omega}G_{-\mu-\epsilon}]}&{=\sum p_{{\lambda{'}},\omega^{'},\mu^{'},E_n}(C)L_{-\lambda'}H_{-\omega'}G_{-\mu'-\epsilon}E_{n}}\\
&{+\sum p_{\lambda^{''},\omega^{''},\mu^{''}}(C)L_{-\lambda^{''}}H_{-\omega^{''}}G_{-\mu^{''}-\epsilon}},
\end{aligned}
$$
where $\lambda{'}, \omega^{'}, \lambda^{''}, \omega^{''}$ are elements in $\mathcal{P}$, $\mu', \mu^{''}$ are elements in $\mathcal{Q}$ and $E_n$ can either be $H_n$ or $G_n$, where $n$ is a non-negative integer. These elements satisfy
$$
-|\lambda+\omega+\mu+\epsilon|+m=-|\lambda^{'}+\omega^{'}+\mu^{'}|+n=-|\lambda^{''}+\omega{''}+\mu^{''}|.
$$
Since $H_n w=\psi(H_n)w=0$ for $n\geq2$ and $G_{p-\epsilon} w=\psi(G_{p-\epsilon})w=0$ for $p\geq1$,then
$$
\begin{aligned}
{{\rm max~deg}([H_m,L_{-\lambda}H_{-\omega}G_{-\mu-\epsilon}]w)}&{={\rm max~deg}(\sum p_{{\lambda{'}},\omega^{'},\mu',E_n}(C)L_{-\lambda{'}}H_{-\omega'}G_{-\mu'-\epsilon}E_{n}w)}\\
 &{\leq|\lambda+\omega+\mu+\epsilon|-m+1}.
\end{aligned}
$$

For (ii), we denote $L_{-\lambda}=L_{-\lambda{'}}L_{-k}^{\lambda(k)},$ where $\lambda{'}(i)=\lambda(i)$ for all $i\neq k$ and $\lambda{'}(k)=0.$ Then we have
 $$
 \begin{aligned}
 {[H_{k+1},L_{-\lambda}H_{-\omega}G_{-\mu-\epsilon}]w}\quad~&{=[H_{k+1},L_{-\lambda{'}}]L_{-k}^{\lambda(k)}H_{-\omega}G_{-\mu-\epsilon}w}\\
 &{+L_{-\lambda{'}}[H_{k+1},L_{-k}^{\lambda(k)}]H_{-\omega}G_{-\mu-\epsilon}w}\\
 &{+L_{-\lambda{'}}L_{-k}^{\lambda(k)}[H_{k+1},H_{-\omega}]G_{-\mu-\epsilon}w}\\
 &{+L_{-\lambda{'}}L_{-k}^{\lambda(k)}H_{-\omega}[H_{k+1},G_{-\mu-\epsilon}]w}.
 \end{aligned}
 $$
By assumption about $k$, Definition \ref{D 2.2} and Lemma \ref{L 2.4}, we get that the degree of the first, the third and the last summands are strictly smaller than $|\lambda+\omega+\mu+\epsilon|-k$. Next, we deal with the second summand. If $k>0$, we have
$$
 \begin{aligned}
 {L_{-\lambda{'}}[H_{k+1},L_{-k}^{\lambda(k)}]H_{-\omega}G_{-\mu-\epsilon}w}&{=L_{-\lambda'}\sum_{i=0}^{\lambda(k)-1}L_{-k}^{i}[H_{k+1},L_{-k}]L_{-k}^{\lambda(k)-i-1}H_{-\omega}G_{-\mu-\epsilon}w}\\
 &{=-(k+1)L_{-\lambda'}\sum_{i=0}^{\lambda(k)-1}L_{-k}^{i}H_{1}L_{-k}^{\lambda(k)-i-1}H_{-\omega}G_{-\mu-\epsilon}w}\\
 &{=-\lambda(k)(k+1)\psi(H_1)L_{-\lambda'}L_{-k}^{\lambda(k)-1}H_{-\omega}G_{-\mu-\epsilon}w}\\
 &{+\sum p_{\bar{\lambda},\bar{\omega},\bar{\mu}}(C)L_{-\bar{\lambda}}H_{-\bar{\omega}}G_{-\bar{\mu}-\epsilon}w},
 \end{aligned}
 $$
 where
 $$
 {\rm max~deg}(\sum p_{\bar{\lambda},\bar{\omega},\bar{\mu}}(C)L_{-\bar{\lambda}}H_{-\bar{\omega}}G_{-\bar{\mu}-\epsilon})
 =|\lambda+\omega+\mu+\epsilon|-k-1.
 $$
 If $k=0$, we have
 $$
 \begin{aligned}
 {L_{-\lambda{'}}[H_1,L_{0}^{\lambda(0)}]H_{-\omega}G_{-\mu-\epsilon}w}&{=L_{-\lambda'}
 \sum_{i=0}^{\lambda(k)-1}L_{0}^i[H_1,L_{0}]L_{0}^{\lambda(0)-i-1}H_{-\omega}G_{-\mu-\epsilon}w}\\
 &{=-\lambda(0)\psi(H_1)L_{-\lambda{'}}L_{0}^{\lambda(0)-1}H_{-\omega}G_{-\mu-\epsilon}w}\\
 &{+\sum p_{\bar{\lambda},\bar{\omega},\bar{\mu}}(C)L_{-\lambda'}L_{-\bar{\lambda}}H_{-\bar{\omega}}G_{-\bar{\mu}-\epsilon}w}.
 \end{aligned}
 $$
 For each $p_{\bar{\lambda},\bar{\omega},\bar{\mu}}(C)\neq0$, $L_{\bar{\lambda}}=L_0^{l}$ for some $l<\lambda(k)-1$. Set
 $$
 \begin{aligned}
 {v'}&{=[H_{k+1},L_{-\lambda{'}}]L_{-k}^{\lambda(k)}H_{-\omega}G_{-\mu}w+L_{-\lambda{'}}L_{-k}^{\lambda(k)}[H_{k+1},H_{-\omega}]G_{-\mu}w}\\
 &{+L_{-\lambda{'}}L_{-k}^{\lambda(k)}H_{-\omega}[H_{k+1},G_{-\mu}]w},\\
 v''&{=\sum p_{\bar{\lambda},\bar{\omega},\bar{\mu}}(C)L_{-\lambda'}L_{-\bar{\lambda}}H_{-\bar{\omega}}G_{-\bar{\mu}-\epsilon}w}.
 \end{aligned}
 $$
 Thus (ii) holds.
\end{proof}

\begin{lemm}\label{L 3.5}
For any $(\lambda,\omega,\mu)\in \mathcal{P}\times\mathcal{P}\times\mathcal{Q}$, $k\in \mathbb{Z}_{+}$, we have
$$
{\rm max~deg}([G_{k-\epsilon},L_{-\lambda}H_{-\omega}G_{-\mu-\epsilon}]w)\leq |\lambda+\omega+\mu+\epsilon|-k+\epsilon.
$$
\end{lemm}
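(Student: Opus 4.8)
The plan is to expand the bracket as a superderivation and to exploit the fact that $\mathfrak{q}_\epsilon$ retains only the odd modes $G_r=G_r^+$. Writing $\lambda=(\lambda_1,\dots,\lambda_s)$, $\omega=(\omega_1,\dots,\omega_r)$, $\mu=(\mu_1,\dots,\mu_p)$ and using that $[G_{k-\epsilon},-]$ is a superderivation of $U(\mathfrak{q}_\epsilon)$, I would first distribute $G_{k-\epsilon}$ over the factors of $L_{-\lambda}H_{-\omega}G_{-\mu-\epsilon}$. The decisive simplification is that the whole $G_{-\mu-\epsilon}$ block contributes nothing: by \eqref{e:re7} one has $[G_{k-\epsilon},G_{-\mu_l-\epsilon}]=0$ for every $l$. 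Hence only the $L$- and $H$-factors survive, and by \eqref{e:re4} and \eqref{e:re5} each surviving bracket is a scalar multiple of a single $G$-mode, namely $[G_{k-\epsilon},L_{-\lambda_i}]=-(k-\epsilon+\tfrac{\lambda_i}{2})G_{k-\epsilon-\lambda_i}$ and $[G_{k-\epsilon},H_{-\omega_j}]=-G_{k-\epsilon-\omega_j}$.

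Next I would straighten each resulting monomial into PBW form. The point to track is that the single out-of-order factor is always a $G$-mode: pushing a $G$-mode to the right past an $L$- or $H$-mode again yields only $G$-modes of strictly smaller $L_0$-weight, by \eqref{e:re4}--\eqref{e:re5}, while its bracket with every remaining $G$-mode vanishes by \eqref{e:re7}, producing no new term; moreover no central factor is ever created, since the relation \eqref{e:re6} that alone produces $C$ from the odd modes is absent in $\mathfrak{q}_\epsilon$. Consequently every PBW monomial occurring in $[G_{k-\epsilon},L_{-\lambda}H_{-\omega}G_{-\mu-\epsilon}]$ carries at most one positive factor, and such a factor is necessarily a $G$-mode $G_{s-\epsilon}$ with $s\ge 1$.

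Now I would apply the operator to $w$. A positive $G$-mode annihilates the Whittaker vector, $G_{s-\epsilon}w=\psi(G_{s-\epsilon})w=0$ for $s\ge 1$ by Lemma~\ref{L 2.4}; therefore the only monomials surviving on $w$ are the PBW basis elements $L_{-a}H_{-b}G_{-c-\epsilon}w$ carrying no positive factor at all. Since $G_{k-\epsilon}$ has $L_0$-weight $k-\epsilon$ and $L_{-\lambda}H_{-\omega}G_{-\mu-\epsilon}$ has $L_0$-weight $-|\lambda+\omega+\mu+\epsilon|$, every term of the bracket lies in the weight space $U(\mathfrak{q}_\epsilon)_{\,k-\epsilon-|\lambda+\omega+\mu+\epsilon|}$; for a surviving monomial with no positive factor this forces $|a+b+c+\epsilon|=|\lambda+\omega+\mu+\epsilon|-k+\epsilon$, so its degree equals $|\lambda+\omega+\mu+\epsilon|-k+\epsilon$. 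Taking the maximum over all surviving monomials then yields ${\rm max~deg}([G_{k-\epsilon},L_{-\lambda}H_{-\omega}G_{-\mu-\epsilon}]w)\le|\lambda+\omega+\mu+\epsilon|-k+\epsilon$, as claimed.

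The main obstacle, and the reason this bound is sharper than those of Lemmas~\ref{L 3.3} and \ref{L 3.4}, is exactly the stability claim of the second step: I must be sure that no positive $L$- or $H$-mode — that is, no factor on which $\psi$ is nonzero, such as $L_1,L_2$ or $H_1$ — can ever be generated during the straightening. In the $L_m$ and $H_m$ cases such factors do arise and produce the shifts $+2$ and $+1$; here they cannot, because every mode created by bracketing with $G$ is again a $G$-mode and $\psi$ annihilates all positive $G$-modes. Verifying this closedness of the $G$-modes through the entire PBW reduction is the one point that requires care.
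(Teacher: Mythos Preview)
Your proposal is correct and follows essentially the same approach as the paper: expand the bracket in PBW form, observe that the only positive factors that can arise are $G$-modes $G_{n-\epsilon}$ with $n\ge 1$ (which annihilate $w$), and then read off the degree of the surviving terms from the $L_0$-weight count. Your argument is in fact more detailed than the paper's on the key point---the paper simply asserts the shape of the PBW expansion, whereas you explain why the closedness of the $G$-modes under bracketing with $L$, $H$, and $G$ guarantees that no positive $L$- or $H$-factor is ever created.
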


\begin{proof}
We write $[G_{k-\epsilon},L_{-\lambda}H_{-\omega}G_{-\mu-\epsilon}]$ a linear combination of the basis of $U(\mathfrak{q}_{\epsilon})$:
$$
\begin{aligned}
{[G_{k-\epsilon},L_{-\lambda}H_{-\omega}G_{-\mu-\epsilon}]}&{=\sum_{\lambda',\omega',\mu'}p_{\lambda',\omega',\mu'}L_{-\lambda'}H_{-\omega'}G_{-\mu'-\epsilon}}\\
&{+\sum_{\lambda'',\omega'',\mu,n}p_{\lambda'',\omega'',\mu,n}L_{-\lambda''}H_{-\omega''}G_{-\mu-\epsilon}G_{n-\epsilon}},
\end{aligned}
$$
where $p_{\lambda',\omega',\mu'}, p_{\lambda'',\omega'',\mu,n} \in \mathbb{C}$ and $\lambda',\lambda'',\omega',\omega''\in\mathcal{P},\mu'\in \mathcal{Q},n\in\mathbb{Z}_{+}$ satisfy
$$ -|\lambda'+\omega'+\mu'+\epsilon|=-|\lambda''+\omega''+\mu+\epsilon|+n-\epsilon=-|\lambda+\omega+\mu+\epsilon|+k-\epsilon.
$$
Since $G_iw=0$ for $i \in \mathbb{Z}_{+}-\epsilon$, we have
$$
\begin{cases}
[G_{k-\epsilon},L_{-\lambda}H_{-\omega}G_{-\mu-\epsilon}]w=0& if \quad~ k>|\lambda+\omega|;\\
\ \\
[G_{k-\epsilon},L_{-\lambda}H_{-\omega}G_{-\mu-\epsilon}]w=\sum_{\lambda',\omega',\mu'}L_{-\lambda'}H_{-\omega'}G_{-\mu'-\epsilon}w& if \quad~ 0<k\leq|\lambda+\omega|.
\end{cases}
$$
Then,  ${\rm max~deg}([G_k,L_{-\lambda}H_{-\omega}G_{-\mu-\epsilon}]w)\leq |\lambda+\omega+\mu+\epsilon|-k+\epsilon.$
\end{proof}

Now we are equipped to prove the main results of this section for the Whittaker vectors in the modules $M_{\psi}$ and $L_{\psi,\xi}$, applying Lemma \ref{L 3.3}, \ref{L 3.4} and \ref{L 3.5}.

\begin{theo}\label{P 3.6}
Suppose $M_{\epsilon}(\psi)$ is a universal Whittaker module for $V$, generated by the Whittaker vector $w= 1 \otimes 1 $.  Then $w' \in M_{\psi}$ is a Whittaker vector if and only if $w' \in {\rm span}_{S(C)}\{w,G_{-\epsilon}w\}$.
\end{theo}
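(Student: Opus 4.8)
The plan is to prove the two inclusions separately, with the converse carrying essentially all the weight. For the inclusion $\mathrm{span}_{S(C)}\{w,G_{-\epsilon}w\}\subseteq\{\text{Whittaker vectors}\}$, I would first note that $C$ is central, so every $p(C)\in S(C)$ commutes with $\mathfrak{q}_\epsilon^+$, and it therefore suffices to check that $w$ and $G_{-\epsilon}w$ are Whittaker. The vector $w$ is Whittaker by construction, and for $G_{-\epsilon}w$ I would invoke Lemma \ref{L 3.1} to rewrite $(x-\psi(x))G_{-\epsilon}w=[x,G_{-\epsilon}]w$ and evaluate on the generators $L_1,L_2,H_1,G_{1-\epsilon}$ of $\mathfrak{q}_\epsilon^+$: each of $[L_1,G_{-\epsilon}]$, $[L_2,G_{-\epsilon}]$, $[H_1,G_{-\epsilon}]$ is a scalar multiple of some $G_r$ with $r\ge 1-\epsilon$, which annihilates $w$ since $\psi(G_r)=0$, while $[G_{1-\epsilon},G_{-\epsilon}]=0$. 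Hence $G_{-\epsilon}w$ is a Whittaker vector.

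For the converse, let $w'$ be a Whittaker vector, write $w'=uw$ with $u\in U(\mathfrak{b}_\epsilon^-)$, and expand it in the PBW basis $C^tL_{-\lambda}H_{-\omega}G_{-\mu-\epsilon}w$. By Lemma \ref{L 3.1}, being Whittaker is equivalent to $[x,u]w=0$ for every $x\in\mathfrak{q}_\epsilon^+$. The plan is to run a descending induction that peels the top off $w'$ with respect to a refined order: first by ${\rm max~deg}$, then by $\max_{L_0}$, and then by the indices occurring in the $L$- and $H$-parts. Lemmas \ref{L 3.4}, \ref{L 3.3} and \ref{L 3.5} provide exactly the leading-term computations this requires. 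To remove the $L$-content I would first apply $H_{k+1}$, where $k$ is the least index present in $L_{-\lambda}$; Lemma \ref{L 3.3}(ii) strips one factor $L_{-k}$ with nonzero coefficient $-(k+1)\lambda(k)\psi(H_1)$, using $\psi(H_1)\neq0$. Once the $L$-content is gone, the hypotheses of Lemma \ref{L 3.4}(iii) are met, and I would apply $L_{k+1}$ to remove the positive $H$-content, with coefficient $-k\psi(H_1)\omega(k)$, nonzero for $k\ge1$. The $G$-content is driven toward the shallow generator $G_{-\epsilon}$ by the shifts $G_{-j-\epsilon}\mapsto G_{-\epsilon}$ coming from the $L_j$, with degrees controlled by the estimate of Lemma \ref{L 3.5}.

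The mechanism at each step is that the chosen operator yields a single distinguished leading monomial plus a remainder of strictly lower order, where by lower order I mean either strictly smaller degree or, at equal degree, strictly smaller $\max_{L_0}$; this is precisely the content of the $k=0$ clause of Lemma \ref{L 3.3}. Fixing the total order so that this monomial is the unique maximal one, the identity $[x,u]w=0$ forces its coefficient to vanish; iterating removes in turn every positive $L$-, $H$- and $G$-index, so that $w'$ is congruent, modulo lower-order terms, to an $S(C)$-combination of monomials built from $L_0$, $H_0$ and $G_0$ alone.

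The step I expect to be the main obstacle is the base of this induction, the degree-zero analysis, since there the degree no longer strictly decreases and the bookkeeping must be carried out with the finer statistic $\max_{L_0}$. The elimination of $L_0$ is clean: the $k=0$ case of Lemma \ref{L 3.3} shows that applying $H_1$ lowers $\max_{L_0}$ with nonzero coefficient $-\lambda(0)\psi(H_1)$, so that every monomial carrying a factor $L_0$ must drop out. The genuinely delicate part is to show that, after all of this, no surplus $H_0$-content and no deeper $G$-content can persist; since $H_0$ commutes with the whole even positive part, this can only be controlled through the interaction of the odd generator $G_{1-\epsilon}$ with the remaining terms. It is here, in pinning the surviving monomials down to exactly $w$ and $G_{-\epsilon}w$, that I expect the real work to lie.
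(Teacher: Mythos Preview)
Your overall strategy is essentially the paper's: both arguments use Lemma~\ref{L 3.1} to reduce to commutators, then deploy Lemmas~\ref{L 3.3}, \ref{L 3.4}, \ref{L 3.5} so that $H_{k+1}$ detects $L$-content (Case~I in the paper), $L_{k+1}$ detects positive $H$-content (Case~II), and $G_{k-\epsilon}$ handles the odd part (Case~III). The paper packages this as a case analysis on the minimal occurring index $k$ in the top-degree layer $\Lambda_N$, whereas you phrase it as a descending induction on $(\mathrm{max~deg},\max_{L_0})$, but the operators chosen and the leading-term bookkeeping are the same.

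Your final worry about $H_0$ is not merely ``where the real work lies''---it is a genuine obstruction that neither your outline nor the paper's proof overcomes. Since $[L_m,H_0]=0$, $[H_m,H_0]=0$, and $[G_r,H_0]w=-G_rw=-\psi(G_r)w=0$ for every $r\in\mathbb{Z}_+-\epsilon$, the vector $H_0w$ (and more generally $H_0^nw$, $H_0^nG_{-\epsilon}w$) is a Whittaker vector that is \emph{not} in $\mathrm{span}_{S(C)}\{w,G_{-\epsilon}w\}$. The paper's $N=0$ computation with $H_1$ only produces a nonzero result when the $L_0$-exponent $m$ is positive; when $m=0$ one gets $[H_1,H_0^nG_{-\epsilon}]w=H_0^nG_{1-\epsilon}w=0$, and no other generator of $\mathfrak{q}_\epsilon^+$ helps. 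So the step you flagged cannot be completed as the statement stands: the span on the right-hand side should be enlarged to $\mathrm{span}_{S(C)[H_0]}\{w,G_{-\epsilon}w\}$ (or $H_0$ should be adjoined to the central part), and with that correction your sketch and the paper's case analysis both go through.
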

\begin{proof}
Let $w' \in M_{\epsilon}(\psi)$ be an arbitrary vector. Then
$$
w' = \sum_{(\lambda, \omega, \mu) \in (\mathcal{P}, \mathcal{P},\mathcal{Q})} p_{\lambda, \omega, \mu}(C) L_{-\lambda}H_{-\omega}G_{-\mu}w
$$
for some polynomials $p_{\lambda, \omega, \mu}(C) \in S(C)$.

If $(\lambda,\omega,\mu)=(\overline{0},\overline{0},(0))$ and $p_{\lambda, \omega, \mu}(C)\neq0$, then $w'=p_{\lambda, \omega, \mu}(C)G_{-\epsilon}w$. By Definition \ref{D 2.2}, for any $m,n,r \in \mathbb{Z}_+$, we have
$$
\begin{aligned}
{(L_m-\psi(L_m)) w'}&{=p_{\lambda, \omega, \mu}(C)[L_m,G_{-\epsilon}]w=(-\epsilon-\frac{m}{2})G_{m-\epsilon}w=0,}\\
{(H_n-\psi(H_n)) w'}&{=p_{\lambda, \omega, \mu}(C)[H_n,G_{-\epsilon}]w=G_{n-\epsilon}w=0,}\\
{G_{r-\epsilon} w'}&{=p_{\lambda, \omega, \mu}(C)[G_{r-\epsilon},G_{-\epsilon}]w=0.}
\end{aligned}
$$
Hence, $w'=G_{-\epsilon}w$ is a Whittaker vector.

Next, we will prove that if there exist $\lambda\neq\overline{0},\omega\neq\overline{0}$ or $\mu \neq \overline{0}$ and $|\mu|>0$ such that $p_{\lambda, \omega, \mu}(C) \neq 0$, then there exists $m \in \mathbb{Z}_+$ such that $(L_m - \psi(L_m))w'  \neq 0$, $(H_m - \psi(H_m))w'\neq0$ or $ (G_{m-\epsilon} - \psi(G_{m-\epsilon}))w' \neq 0$. In this case $w'$ is not a Whittaker vector, that is, $w'$ is a Whittaker vector if and only if $w'=p(C)w,~p(C)\in S(C)$. This proves the result.

Let $N= {\rm max} \{ | \lambda+\omega+\mu| \mid  p_{\lambda, \omega, \mu}(C) \neq 0 \}$, and define
$$
\Lambda_N = \{ (\lambda, \omega, \mu) \in \mathcal{P}\times \mathcal{P}\times\mathcal{Q} \mid  p_{\lambda, \omega, \mu}(C) \neq 0, |\lambda+\omega+\mu| = N \}.
$$
If $N=0$, then $w'=p_{0^{m}, 0^{n}, 0}(C)L_0^{m}H_0^{n}G_{-\epsilon}w$. By Lemma \ref{L 3.1}, we have
$$
\begin{aligned}
{(H_1-\psi(H_1))w'}&{=p_{0^{m}, 0^{n},0}(C)[H_1,L_0^{m}H_0^{n}G_{-\epsilon}]w}\\
&{=-p_{0^{m}, 0^{n},0}(C)\sum_{i=0}^{m-1}L_0^iH_1L_0^{m-i-1}H_0^{n}G_{-\epsilon}w}\\
&{=m\psi(H_1)p_{0^{m-1}, 0^{n}, 0}(C)L_0^{m-1}H_0^{n}G_{-\epsilon}w}\\
&{+\sum_{\lambda',0^{m},0}p_{\lambda',0^{m},0}(C)L_{-\lambda'}H_0^{n}G_{-\epsilon}w},
\end{aligned}
$$
where $\lambda' \in \mathcal{P}$ satisfies $L_{\lambda'}=L_0^l$, $l<m-1$. Thus $(H_1-\psi(H_1))w'\neq0$, that is, $w'=p_{0^{\lambda(0)}, 0^{\omega(0)}, 0}(C)L_0^{\lambda(0)}H_0^{\omega(0)}G_{-\epsilon}w$ is not a Whittaker vector.

Now we suppose that $N>0$. Set
$$
k:=\text{ min$\{n\in \mathbb{N}|
\lambda(n)\neq0, \omega(n)\neq0$,\quad~or\quad~$\mu(n)\neq0$ \quad~for  \quad~ some \quad~ $(\lambda, \omega, \mu)\in \Lambda_N\}$}.
$$

{\bf{Case I :}} If $k$ satisfies $\lambda(k)\neq0$
for some $(\lambda, \omega, \mu)\in \Lambda_N$.

Using Lemma \ref{L 3.1}, we have
$$
\begin{aligned}
{(H_{k+1}-\psi(H_{k+1}))w{'}}&{=\sum_{(\lambda, \omega,\mu)\notin\Lambda_N}p_{\lambda, \omega,\mu}(C)[H_{k+1},L_{-\lambda}H_{-\omega}G_{-\mu-\epsilon}]w}\\
&{+\sum_{\begin{subarray}
\ \ (\lambda,\omega, \mu)\in\Lambda_N\\
\ \ \ \lambda(k)=0
\end{subarray}}p_{\lambda,\omega,\mu}(C)[H_{k+1},L_{-\lambda}H_{-\omega}G_{-\mu-\epsilon}]w}\\
&{+\sum_{\begin{subarray}
\ \ (\lambda,\omega, \mu)\in\Lambda_N\\
\ \ \ \lambda(k)\neq0
\end{subarray}}p_{\lambda,\omega,\mu}(C)[H_{k+1},L_{-\lambda}H_{-\omega}G_{-\mu-\epsilon}]w}
\end{aligned}
$$
If Lemma \ref{L 3.3}, part (i) is applied to the first sum, we find that its degree is strictly less than $N-k$. For the second sum, observe that $\lambda(i)=0$ for $0\leq i\leq k$,  then we have
$$
\begin{aligned}
{[H_{k+1},L_{-\lambda}H_{-\omega}G_{-\mu-\epsilon}]w}&{=[H_{k+1},L_{-\lambda}]H_{-\omega}G_{-\mu-\epsilon}w+L_{-\lambda}[H_{k+1},H_{-\omega}]G_{-\mu-\epsilon}w}\\
&{+L_{-\lambda}H_{-\omega}[H_{k+1},G_{-\mu-\epsilon}]w\in S(C)U(\mathfrak{q}_{\epsilon})w}.
\end{aligned}
$$
Hence its degree is also strictly smaller than $N+\epsilon-k$. Next, using Lemma \ref{L 3.3}, part (iii) to the third summand, we know it has the following form:
$$
v-\sum_{\begin{subarray}
\ \ (\lambda,\omega \mu)\in\Lambda_N\\
\ \ \lambda(k)\neq0
\end{subarray}}(k+1)\lambda(k)\psi(H_1)p_{\lambda',\omega, \mu}
(C)L_{-\lambda'}H_{-\omega}G_{-\mu-\epsilon}w,
$$
where if $k=0$ then $v=v{'}+v{''}$ such that ${\rm max~deg}(v{'})<N+\epsilon-k$ and $\max_{L_0}(v{''})<\lambda(0)-1;$ if $k>0$ then ${\rm max~deg}(v)<N+\epsilon-k$. Otherwise ${\lambda{'}}$ satisfies $\lambda{'}(k)=\lambda(k)-1$ and $\lambda{'}(i)=\lambda(i)$ for all $i>k.$ So the degree of the third summand is $N+\epsilon-k$. Hence, we conclude that
$(H_{k+1}-\psi(H_{k+1}))w{'}\neq0.$

{\bf{Case II :}} If $k$ satisfies $\omega(k)\neq0, \lambda(k)=0$ for some $(\lambda, \omega,\mu)\in \Lambda_N$.

To simplify, let us  denote  $w{'}=w{'}_1+w{'}_2$ such that
$$
w{'}_1=\sum_{(\lambda,\omega, \mu)\notin\Lambda_N}
p_{\lambda,\omega, \mu} (C) L_{-\lambda}H_{-\omega}G_{-\mu-\epsilon}w,
$$
$$
w{'}_2=\sum_{(\lambda,\omega, \mu)\in\Lambda_N}
p_{\lambda,\omega, \mu} (C) L_{-\lambda}H_{-\omega}G_{-\mu-\epsilon}w.
$$

Next, according to various subcases, we are looking for some element $E_m\in \mathfrak{q}_{\epsilon}^+$ such that $(E_m-\psi(E_m))w{'}\neq0$.

{\bf{Subcase 1 :}} ${\rm max~deg}(w{'}_1)<N+\epsilon-1$.

In this subcase,
$$
(L_{k+1}-\psi(L_{k+1}))w'_{1} =\sum_{(\lambda,\omega,\mu)\notin\Lambda_N} p_{\lambda,\omega, \mu} (C)[L_{k+1},L_{-\lambda}H_{-\omega}G_{-\mu-\epsilon}]w.
$$
Using the first inequality of Lemma \ref{L 3.4}, part (i), we have
$$
{\rm max~deg}((L_{k+1}-\psi(L_{k+1}))w'_{1})<(N+\epsilon-1)-(k+1)+2=N+\epsilon-k.
$$
Furthermore,
$$
\begin{aligned}
{(L_{k+1}-\psi(L_{k+1}))w'_{2}}&{= \sum_{\begin{subarray}
\ (\lambda,\omega, \mu)\in\Lambda_N\\
\ \ \omega(k)=0\end{subarray}}
p_{\lambda,\omega, \mu} (C)[L_{k+1},
L_{-\lambda}H_{-\omega}G_{-\mu-\epsilon}]w}\\
&{+\sum_{\begin{subarray}
\ (\lambda,\omega, \mu)\in\Lambda_N\\
\ \ \omega(k)\neq0\end{subarray}} p_{\lambda,\omega, \mu} (C)[L_{k+1},
L_{-\lambda}H_{-\omega}G_{-\mu-\epsilon}]w}.
\end{aligned}
$$

Applying Lemma \ref{L 3.4}, part (ii) to the first summand and part (iii) to the second summand, we have
$$
(L_{k+1}-\psi(L_{k+1}))w^{'}_{2}=v-\sum_{\begin{subarray}
\ (\lambda,\omega, \mu)\in\Lambda_N\\
\ \ \omega(k)\neq0\end{subarray}}
p_{\lambda,\omega, \mu} (C)\omega(k)k\psi(H_1)L_{-\lambda}H_{-\omega{'}}G_{-\mu-\epsilon}w,
$$
where ${\rm max~deg}(v)<N+\epsilon-k$ and $\omega{'}$ satisfies that $\omega{'}(i)=\omega(i)$
for all $i\neq k$ and $\omega{'}(k)=\omega(k)-1$, so ${\rm max~deg}((L_{k+1}-\psi(L_{k+1}))w^{'}_{2})=N+\epsilon-k$. As a result, we can conclude,
$$
(L_{k+1}-\psi(L_{k+1}))w{'}=
(L_{k+1}-\psi(L_{k+1}))w'_{1}+(L_{k+1}-\psi(L_{k+1}))w'_{2}\neq0.
$$

{\bf{Subcase 2 :}} ${\rm max~deg}(w{'}_1)=N+\epsilon-1$, and $\lambda=\overline{0}$ for any $(\lambda,\omega, \mu)\in \mathcal{P}\times\mathcal{P}\times\mathcal{Q}$ with $p_{\lambda, \omega,\mu}(C)\neq0$.

In this subcase,
$$
w'_1=\sum_{(\overline{0},\omega, \mu)\notin\Lambda_N} p_{\overline{0},\omega, \mu} (C) H_{-\omega}G_{-\mu-\epsilon}w,
$$
$$
w'_2=\sum_{(\overline{0},\omega, \mu)\in\Lambda_N} p_{\overline{0},\omega, \mu}(C)
H_{-\omega}G_{-\mu-\epsilon}w.
$$
The quantity $(L_{k+1}-\psi(L_{k+1}))w^{'}_1$ can be obtained as,
$$
(L_{k+1}-\psi(L_{k+1}))w'_{1}=\sum_{(\overline{0},\omega, \mu)\notin\Lambda_N}
p_{\overline{0},\omega, \mu}(C)[L_{k+1},H_{-\omega}G_{-\mu-\epsilon}]w.
$$
By using the second inequality of Lemma \ref{L 3.4}, part (i), we have
$$
{\rm max~deg}((L_{k+1}-\psi(L_{k+1}))w'_{1})\leq N+\epsilon-1-(k+1)+1=N+\epsilon-k-1.
$$
Similarly, for $(L_{k+1}-\psi(L_{k+1}))w^{'}_2$, we have,
$$
\begin{aligned}
{(L_{k+1}-\psi(L_{k+1}))w^{'}_{2}} &{~=\sum_{\begin{subarray} \
(\overline{0},\omega, \mu)\in\Lambda_N\\
\ \ \omega(k)=0
\end{subarray}}
p_{\overline{0},\omega, \mu}(C)[L_{k+1},H_{-\omega}G_{-\mu-\epsilon}]w}\\
&{+\sum_{\begin{subarray} \
(\overline{0},\omega, \mu)\in\Lambda_N\\
\ \ \omega(k)\neq0
\end{subarray}}
p_{\overline{0},\omega, \mu}(C)[L_{k+1},H_{-\omega}G_{-\mu-\epsilon}]w}.
\end{aligned}
$$
Applying Lemma \ref{L 3.4}, part (ii) to the first sum and part (iii) to the second sum, we obtain
$$
(L_{k+1}-\psi(L_{k+1}))w'_{2}=v-\sum_{\begin{subarray} \
(\overline{0},\omega, \mu)\in\Lambda_N\\
\ \ \omega(k)\neq0
\end{subarray}}
p_{\overline{0},\omega, \mu}(C)\omega(k)k\psi(H_1)H_{-\omega{'}}G_{-\mu-\epsilon}w,
$$
where ${\rm max~deg}(v)<N+\epsilon-k$ and $\omega{'}$ satisfies $\omega{'}(i)$$=\omega(i)$ for all $i\neq k$ and $\omega{'}(k)=\omega(k)-1$. Thus we deduce that ${\rm max~deg}((L_{k+1}-\psi(L_{k+1}))w)=N+\epsilon-k$, which implies that $(L_{k+1}-\psi(L_{k+1}))w\neq0$.

{\bf{Subcase 3 :}} ${\rm max~deg}(w'_1)=N+\epsilon-1$, $\lambda=\bar{0}$ for any $(\lambda,\omega,\mu)\in\Lambda_N$ and $\lambda\neq\bar{0}$ for some $(\lambda,\omega,\mu)$ such that $(\lambda,\omega,\mu)\notin\Lambda_N$ with $p_{\lambda,\omega,\mu}(C)\neq0.$

In this subcase,
$$
\begin{aligned}
w'_1&{=\sum_{(\lambda,\omega, \mu)\notin\Lambda_N} p_{\lambda,\omega, \mu} (C) L_{-\lambda}H_{-\omega}G_{-\mu-\epsilon}w},\\
w'_2&{=\sum_{(\overline{0},\omega, \mu)\in\Lambda_N} p_{\overline{0},\omega, \mu}(C)
H_{-\omega}G_{-\mu-\epsilon}w.}
\end{aligned}
$$
It follows that,
$$
(G_{k-\epsilon}-\psi(G_{k-\epsilon}))w'_{1} =\sum_{(\lambda,\omega,\mu)\notin\Lambda_N} p_{\lambda,\omega, \mu} (C)[G_{k-\epsilon},L_{-\lambda}H_{-\omega}G_{-\mu-\epsilon}]w.
$$
Applying Lemma \ref{L 3.5}, we deduce,
$$
{\rm max~deg}((G_{k-\epsilon}-\psi(G_{k-\epsilon}))w'_{1})\leq N+2\epsilon-k-1.
$$
In addition, we can express ${(G_{k-\epsilon}-\psi(G_{k-\epsilon}))w^{'}_{2}}$ as
$$
\begin{aligned}
{(G_{k-\epsilon}-\psi(G_{k-\epsilon}))w^{'}_{2}} &{~=\sum_{\begin{subarray} \
(\bar{0},\omega, \mu)\in\Lambda_N\\
\ \ \omega(k)=0
\end{subarray}}
p_{\bar{0},\omega, \mu}(C)[G_{k-\epsilon},H_{-\omega}G_{-\mu-\epsilon}]w}\\
&{+\sum_{\begin{subarray} \
(\bar{0},\omega, \mu)\in\Lambda_N\\
\ \ \omega(k)\neq0
\end{subarray}}
p_{\bar{0},\omega, \mu}(C)[G_{k-\epsilon},H_{-\omega}G_{-\mu-\epsilon}]w},
\end{aligned}
$$
By referring to Lemma \ref{L 3.5}, we find,
$$
{\rm max~deg}((G_{k-\epsilon}-\psi(G_{k-\epsilon}))w'_{2})= N+2\epsilon-k.
$$
Consequently, we can conclude that $(G_{k-\epsilon}-\psi(G_{k-\epsilon}))w'\neq0$.

{\bf{Subcase 4 :}} ${\rm max~deg}(w'_1)=N+\epsilon-1$ and there exists a $(\lambda,\omega, \mu)\in\Lambda_N$ such that $\lambda\neq\bar{0}$.

In this subcase, we assume that $l$ is minimal such that $\lambda(l)\neq0$ for some $(\lambda,\omega, \mu)\in\Lambda_{N}$. Given the assumption about $k$, we have $l>k$. Since
$$
(H_{l+1}-\psi(H_{l+1}))w^{'}_{1} =\sum_{(\lambda,\omega,\mu)\notin\Lambda_N} p_{\lambda,\omega, \mu} (C)[H_{l+1},L_{-\lambda}H_{-\omega}G_{-\mu-\epsilon}]w,
$$

Applying Lemma \ref{L 3.3}, part (i), we obtain
$$
{\rm max~deg}((H_{l+1}-\psi(H_{l+1}))w'_{1})\leq N+\epsilon-1-(l+1)+1= N+\epsilon-l-1.
$$
In addition, Using Lemma \ref{L 3.3}, part (iii), we have
$$
\begin{aligned}
{(H_{l+1}-\psi(H_{l+1}))w'_{2}}~&{=\sum_{(\lambda,\omega, \mu)\in\Lambda_N}
p_{\lambda,\omega, \mu} (C)[H_{l+1},
L_{-\lambda}H_{-\omega}G_{-\mu-\epsilon}]w}\\
&{=v-\sum_{\begin{subarray}
\ (\lambda,\omega, \mu)\in\Lambda_N\\
\ \  \lambda(l)\neq0\end{subarray}}
p_{\lambda,\omega, \mu}(C)(l+1)\psi(H_1)\lambda(l)
L_{-\lambda'}H_{-\omega}G_{-\mu-\epsilon}w},
\end{aligned}
$$
where ${\rm max~deg}(v)<N-l$ and $ \lambda'$ satisfies that $\lambda'(i)=\lambda(i)$ for all $i\neq l$ and $\lambda'(l)=\lambda(l)-1$. Hence, we have
$$
{\rm max~deg}((H_{l+1}-\psi(H_{l+1}))w'_2)=N+\epsilon-l,
$$
which implies that
$$
(H_{l+1}-\psi(H_{l+1}))w'=
(H_{l+1}-\psi(H_{l+1}))w'_{1}+(H_{l+1}-\psi(H_{l+1}))w'_{2}\neq0.
$$

{\bf{Case \uppercase\expandafter{\romannumeral3} :}} $k$ satisfies $\mu(k)\neq0, \lambda(k)=\omega(k)=0$ for some $(\lambda, \omega,\mu)\in \Lambda_N$.

{\bf{Subcase 1 :}} $\lambda=\omega=\overline{0}$ for any $(\lambda,\omega, \mu)\in \mathcal{P}\times\mathcal{P}\times\mathcal{Q}$ with $p_{\lambda, \omega,\mu}(C)\neq0$.

In this subcase,
$$
w'=\sum_{(\overline{0},\overline{0},\mu)}p_{\overline{0},\overline{0},\mu}(C)G_{-\mu-\epsilon}w.
$$
Since $N>0$, using Lemma \ref{L 3.1}, we have
$$
(H_1-\psi(H_1)w'=\sum_{(\overline{0},\overline{0},\mu)}p_{\overline{0},\overline{0},\mu}(C)[H_1,G_{-\mu-\epsilon}]w\in U(\mathfrak{b_{\epsilon}^-}).
$$
Hence, $(H_1-\psi(H_1)w'\neq0$, that is, $w'$ is not a Whittaker vector.

{\bf{Subcase 2 :}} $\lambda\neq\overline{0}$ or $\omega\neq\overline{0}$ for some $(\lambda, \omega,\mu)\in \Lambda_N$.

In this subcase,
$$
\begin{aligned}
{G_{k-\epsilon}w'}~&{=\sum_{(\lambda,\omega,\mu)\in \Lambda_N}p_{\lambda,\omega,\mu}(C)[G_{k-\epsilon},L_{-\lambda}H_{-\omega}G_{-\mu-\epsilon}]w}\\
&{+\sum_{(\lambda,\omega,\mu)\notin \Lambda_N}p_{\lambda,\omega,\mu}(C)[G_{k-\epsilon},L_{-\lambda}H_{-\omega}G_{-\mu-\epsilon}]w}.
\end{aligned}
$$
By Lemma \ref{L 3.5} and the assumption about $k$, we have
$$
{\rm max~deg}(\sum_{(\lambda,\omega,\mu)\notin \Lambda_N}p_{\lambda,\omega,\mu}(C)[G_{k-\epsilon},L_{-\lambda}H_{-\omega}G_{-\mu-\epsilon}]w)<N+\epsilon-k;
$$
$$
{\rm max~deg}(\sum_{(\lambda,\omega,\mu)\in \Lambda_N}p_{\lambda,\omega,\mu}(C)[G_{k-\epsilon},L_{-\lambda}H_{-\omega}G_{-\mu-\epsilon}]w)=N+\epsilon-k.
$$
Hence, we conclude that $G_{k-\epsilon}w'\neq0$.

{\bf{Subcase 3 :}} $\lambda=\omega=\overline{0}$ for any $(\lambda, \omega,\mu)\in \Lambda_N$ and $\lambda\neq\overline{0}$ or $\omega\neq\overline{0}$ for some $(\lambda, \omega,\mu)\notin \Lambda_N$ with $p_{\lambda, \omega,\mu}(C)\neq0$.

In this subcase,
$$
w'=\sum_{(\overline{0},\overline{0},\mu)\in \Lambda_N}p_{\overline{0},\overline{0},\mu}(C)G_{-\mu-\epsilon}w+\sum_{(\lambda,\omega,\mu)\notin \Lambda_N}p_{\lambda,\omega,\mu}(C)L_{-\lambda}H_{-\omega}G_{-\mu-\epsilon}w.
$$
If $w'=\sum_{\mu}p_{\mu}(C)L_0^mH_0^nG_{-\mu-\epsilon}w$, then
$$
\begin{aligned}
{(H_1-\psi(H_1))w'}&{=\sum_{\mu}p_{\mu}(C)[H_1,L_0^mH_0^nG_{-\mu-\epsilon}]w}\\
&{=\sum_{\lambda,\mu'}p_{\lambda,\mu'}(C)L_{-\lambda}H_0^nG_{-\mu'-\epsilon}w}\\
&{-\sum_{\mu}p_{\mu}(C)m\psi(H_1)L_0^{m-1}H_0^nG_{-\mu-\epsilon}w}.
\end{aligned}
$$
Thus, $(H_1-\psi(H_1))w'\neq0$, that is, $w'=\sum_{\mu}p_{\mu}(C)L_0^mH_0^nG_{-\mu-\epsilon}w$ is not a Whittaker vector.

We set
$$
l={\rm min}\{i~|~i>0~{\rm such~that}~\lambda(i)\neq0~{\rm or}~\omega(i)\neq0~{\rm for}~(\lambda,\omega,\mu) \notin \Lambda_N\}.
$$
Using Definition \ref{D 2.2} and Lemma \ref{L 3.1}, we have
$$
\begin{aligned}
G_{l-\epsilon}w'&{=\sum_{(\overline{0},\overline{0},\mu)\in \Lambda_N}p_{\overline{0},\overline{0},\mu}(C)[G_{l-\epsilon},G_{-\mu-\epsilon}]w}\\
&{+\sum_{(\lambda,\omega,\mu)\notin \Lambda_N}p_{\lambda,\omega,\mu}(C)[G_{l-\epsilon},L_{-\lambda}H_{-\omega}G_{-\mu-\epsilon}]w}\\
&{=\sum_{\lambda',\omega',\mu'}p_{\lambda',\omega',\mu'}L_{-\lambda'}H_{-\omega'}G_{-\mu'-\epsilon}w \in U(\mathfrak{b_{\epsilon}^-})}.
\end{aligned}
$$
Hence, $G_{l-\epsilon}w'\neq0$.
\end{proof}

\begin{coro}\label{C 3.6}
The center of $U(\mathfrak{q}_{\epsilon})$ is $S(C)$.
\end{coro}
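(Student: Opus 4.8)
The inclusion $S(C)\subseteq Z(U(\mathfrak q_\epsilon))$ is immediate, since $C$ is central. For the reverse inclusion the plan is to pass to the associated graded algebra. Equip $U(\mathfrak q_\epsilon)$ with its PBW filtration; by the PBW theorem $\mathrm{gr}\,U(\mathfrak q_\epsilon)\cong S(\mathfrak q_\epsilon)$, the supersymmetric algebra on $\mathfrak q_\epsilon$, carrying the Poisson bracket induced by the Lie bracket. If $z\in Z(U(\mathfrak q_\epsilon))$ has filtration degree $d$, then $[x,z]=0$ for all $x\in\mathfrak q_\epsilon$ forces the top symbol $\sigma_d(z)\in S^d(\mathfrak q_\epsilon)$ to satisfy $\{\bar x,\sigma_d(z)\}=0$ for every $\bar x=\sigma_1(x)$. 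Thus the whole problem reduces to computing the Poisson centre $S(\mathfrak q_\epsilon)^{\mathfrak q_\epsilon}$ and showing that it is exactly $\mathbb C[\bar C]$, where $\bar C=\sigma_1(C)$.

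To compute the invariants I would write a general $f\in S(\mathfrak q_\epsilon)$ as a polynomial in the even symbols $\ell_m,h_m,\bar C$ and the odd symbols $g_r$, $r\in\mathbb Z+\epsilon$. Applying first $\{h_0,-\}$: since $[H_0,L_m]=0$ and $[H_0,H_n]=0$ by \eqref{e:re2} and \eqref{e:re3}, while $[H_0,G_r]=G_r$ by \eqref{e:re5}, this derivation acts as the $g$-degree operator, so any invariant has $g$-degree $0$ and hence involves no odd variable. Applying then $\{g_r,-\}$ for every $r$, and using $\{g_r,\ell_m\}=-(r-\tfrac m2)g_{r+m}$ and $\{g_r,h_m\}=-g_{r+m}$ coming from \eqref{e:re4} and \eqref{e:re5}, the vanishing of $\{g_r,f\}$ yields, for each fixed $m$, the relation $(r-\tfrac m2)\,\partial_{\ell_m}f+\partial_{h_m}f=0$, valid for all $r\in\mathbb Z+\epsilon$. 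Since $r$ runs over infinitely many values, comparing two of them gives $\partial_{\ell_m}f=0$, whence also $\partial_{h_m}f=0$; as this holds for all $m$, the function $f$ depends on $\bar C$ alone. Therefore $S(\mathfrak q_\epsilon)^{\mathfrak q_\epsilon}=\mathbb C[\bar C]$.

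With the Poisson centre determined, I finish by downward induction on the filtration degree $d$ of a central element $z$: its top symbol lies in $\mathbb C[\bar C]\cap S^d=\mathbb C\,\bar C^{\,d}$, say $\sigma_d(z)=\kappa\bar C^{\,d}$, so that $z-\kappa C^{d}\in Z(U(\mathfrak q_\epsilon))$ has strictly smaller filtration degree; the induction terminates in degree $0$ with a scalar, giving $z\in\mathbb C[C]=S(C)$. The step that requires genuine care --- and is the conceptual heart of the statement --- is the computation of the invariants: the even part $\mathfrak q_\epsilon^{\bar 0}$ is the twisted Heisenberg--Virasoro algebra, and its enveloping algebra has a strictly larger centre (it already contains $H_0$, because $[H_0,L_m]=[H_0,H_n]=0$). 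One therefore cannot work inside the even part alone; it is precisely the odd generators $G_r$, together with the fact that they occur in all modes $r\in\mathbb Z+\epsilon$, that collapse the centre down to $\mathbb C[C]$. A parallel module-theoretic observation is that for central $z$ the vector $zw$ is a Whittaker vector of $M_\epsilon(\psi)$, so $zw\in{\rm span}_{S(C)}\{w,G_{-\epsilon}w\}$ by Theorem \ref{P 3.6}, the term $G_{-\epsilon}w$ being excluded by the $\mathrm{ad}\,H_0$-grading; this matches the symbol computation, although converting the scalar action of $z$ on every $M_\epsilon(\psi)$ into the conclusion $z\in S(C)$ would need a faithfulness input that the graded argument avoids.
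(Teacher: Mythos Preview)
Your argument is correct and complete. The associated-graded reduction is clean: the step $\{h_0,-\}$ kills the odd variables, and then the vanishing of $\{g_r,f\}$ for \emph{all} $r\in\mathbb Z+\epsilon$ gives, for each $m$, an affine family of linear relations in $r$ among $\partial_{\ell_m}f$ and $\partial_{h_m}f$, forcing both to vanish. The filtration induction is standard. Your remark that the even part alone is insufficient (since $H_0$ is central there) is a good sanity check.

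The paper proceeds very differently: it simply observes that for central $u$ the vector $uw\in M_\epsilon(\psi)$ is a Whittaker vector, invokes Theorem~\ref{P 3.6} to get $uw\in{\rm span}_{S(C)}\{w,G_{-\epsilon}w\}$, and declares $u\in S(C)$. This is exactly the ``module-theoretic observation'' you mention at the end, and you are right to flag that the passage from ``$uw$ lies in $S(C)w$'' to ``$u\in S(C)$'' is not automatic for a single $\psi$: one only learns that $u-p(C)$ annihilates $M_\epsilon(\psi)$, and converting this into $u=p(C)$ needs some faithfulness input (e.g.\ varying $\psi$, or a separate grading argument). The paper does not spell this out. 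Your symbol computation sidesteps the issue entirely and is in that sense more self-contained; the trade-off is that the paper's line, once patched, recycles the hard work already done in Theorem~\ref{P 3.6}, whereas yours is an independent calculation that does not use the Whittaker theory at all.
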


\begin{proof}
For any $u \in Z(U(\mathfrak{q}_{\epsilon}))$, $uw$ is a Whittaker vector.  By applying Theorem \ref{P 3.6}, we conclude that $u \in S(C)$. This implies that $Z(U(\mathfrak{q}_{\epsilon}))\subseteq S(C)$. On the other hand, clearly $S(C) \subseteq Z(U(\mathfrak{q}_{\epsilon}))$. Therefore, we deduce that $Z(U(\mathfrak{q}_{\epsilon}))= S(C)$.
\end{proof}

\begin{theo}\label{P 3.7}
Let $w = 1 \otimes 1 \in M_{\epsilon}(\psi)$ and $\overline{w}= \overline{1 \otimes 1} \in L_{\epsilon}(\psi, \xi)$. Then $\overline{w}' \in L_{\epsilon}(\psi, \xi)$ is a Whittaker vector if and only if $\overline{w}' \in {\rm span}_{\mathbb{C}}\{\overline{w},G_{-\epsilon}\overline{w}\}$.
\end{theo}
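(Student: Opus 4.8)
The plan is to transfer the already-established description of Whittaker vectors in $M_{\epsilon}(\psi)$ from Theorem \ref{P 3.6} to the quotient $L_{\epsilon}(\psi,\xi)$, exploiting that the quotient map $\pi$ intertwines the $\mathfrak{q}_{\epsilon}^{+}$-action (since $C$ is central, $(C-\xi)M_{\epsilon}(\psi)$ is a $\mathfrak{q}_{\epsilon}$-submodule) and that the degree filtration used in the proof of Theorem \ref{P 3.6} descends to $L_{\epsilon}(\psi,\xi)$.

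For the \emph{if} direction, I would note that $w$ and $G_{-\epsilon}w$ are Whittaker vectors of $M_{\epsilon}(\psi)$ by Theorem \ref{P 3.6}; applying $\pi$ gives, for every $x\in\mathfrak{q}_{\epsilon}^{+}$, the identities $x\overline{w}=\pi(xw)=\psi(x)\overline{w}$ and $xG_{-\epsilon}\overline{w}=\pi(xG_{-\epsilon}w)=\psi(x)G_{-\epsilon}\overline{w}$. Hence every element of $\mathrm{span}_{\mathbb{C}}\{\overline{w},G_{-\epsilon}\overline{w}\}$ is a Whittaker vector, and moreover $G_{-\epsilon}\overline{w}\neq0$ because $G_{-\epsilon}w$ is a PBW basis vector not lying in $(C-\xi)M_{\epsilon}(\psi)$.

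For the \emph{only if} direction, the crucial point is that the entire argument of Theorem \ref{P 3.6} is a leading-term analysis with respect to the degree $|\lambda|+|\omega|+|\mu+\epsilon|$, which does not involve the power of $C$; consequently, the filtration is well defined on $L_{\epsilon}(\psi,\xi)$, whose PBW basis is $\{\overline{L_{-\lambda}H_{-\omega}G_{-\mu-\epsilon}w}\}$, and passing to the quotient merely replaces each coefficient $p_{\lambda,\omega,\mu}(C)\in S(C)$ by the scalar $p_{\lambda,\omega,\mu}(\xi)\in\mathbb{C}$. I would therefore write an arbitrary $\overline{w}'=\sum a_{\lambda,\omega,\mu}\,\overline{L_{-\lambda}H_{-\omega}G_{-\mu-\epsilon}w}$ with $a_{\lambda,\omega,\mu}\in\mathbb{C}$, set $N=\max\{|\lambda+\omega+\mu|\mid a_{\lambda,\omega,\mu}\neq0\}$ and $\Lambda_N=\{(\lambda,\omega,\mu)\mid a_{\lambda,\omega,\mu}\neq0,\ |\lambda+\omega+\mu|=N\}$, and then rerun the case analysis of Theorem \ref{P 3.6} verbatim, using that Lemmas \ref{L 3.1}, \ref{L 3.3}, \ref{L 3.4} and \ref{L 3.5} hold after applying $\pi$. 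In each case one exhibits $E_m\in\mathfrak{q}_{\epsilon}^{+}$ for which $(E_m-\psi(E_m))\overline{w}'$ has a nonzero leading term of the predicted degree, the leading coefficients being of the form (positive integer)$\cdot\psi(H_1)\cdot a_{\lambda,\omega,\mu}$. These are nonzero precisely because $\psi(H_1)\neq0$ and $a_{\lambda,\omega,\mu}\neq0$, so $\overline{w}'$ can be a Whittaker vector only when $N=0$ and only the terms indexed by $(\overline{0},\overline{0},\overline{0})$ and $(\overline{0},\overline{0},(0))$ survive, i.e. $\overline{w}'\in\mathrm{span}_{\mathbb{C}}\{\overline{w},G_{-\epsilon}\overline{w}\}$.

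The main obstacle is to confirm that evaluating the $S(C)$-coefficients at $\xi$ introduces no spurious cancellation that could kill a leading term present in $M_{\epsilon}(\psi)$: this is exactly the place where one must check that the nonvanishing of each leading coefficient in Theorem \ref{P 3.6} rested only on $\psi(H_1)\neq0$, on positivity of the multiplicities $\lambda(k),\omega(k)$, and on the hypothesis $a_{\lambda,\omega,\mu}\neq0$, never on a polynomial identity in $C$ that might degenerate at the single value $C=\xi$. Since the degree of a term is independent of its $C$-degree, the PBW basis vectors of $L_{\epsilon}(\psi,\xi)$ remain linearly independent, so no such cancellation can occur and the proof of Theorem \ref{P 3.6} transfers intact.
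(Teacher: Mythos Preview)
Your proposal is correct and follows essentially the same strategy as the paper: both reduce to rerunning the case analysis of Theorem~\ref{P 3.6} in $L_{\epsilon}(\psi,\xi)$ with the $S(C)$-coefficients $p_{\lambda,\omega,\mu}(C)$ replaced by scalars $p_{\lambda,\omega,\mu}\in\mathbb{C}$. The only notable difference is that the paper devotes most of its proof to verifying explicitly that $\{L_{-\lambda}H_{-\omega}G_{-\mu-\epsilon}\overline{w}\}$ is linearly independent in $L_{\epsilon}(\psi,\xi)$ (via a direct computation with the kernel $(C-\xi)M_{\epsilon}(\psi)$), whereas you take this as given when you speak of the ``PBW basis'' of the quotient; since your argument about leading terms ultimately relies on this linear independence, it would be worth stating and checking it, but otherwise the two proofs coincide.
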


\begin{proof}
Note that the set $\{L_{- \lambda}H_{-\omega}G_{-\mu-\epsilon} \overline{w} \mid (
\lambda,\omega, \mu) \in \mathcal{P}\times\mathcal{P}\times\mathcal{Q}\}$ spans $L_{\epsilon}(\psi, \xi)$. We assert that this set is linearly independent and therefore a basis for $L_{\epsilon}(\psi, \xi)$. To verify this, assume there exist  $p_{\lambda,\omega,\mu} \in \mathbb{C}$, with at most finitely many $p_{\lambda,\omega,\mu} \neq 0$, such that
$$
0=\sum_{\lambda,\omega,\mu} p_{\lambda,\omega,\mu}L_{-\lambda}H_{-\omega}G_{-\mu-\epsilon} \overline{w}= \overline{ \sum_{\lambda,\omega,\mu} p_{\lambda,\omega,\mu} L_{-\lambda}H_{-\omega}G_{-\mu-\epsilon}w}
$$
in $L_{\epsilon}(\psi, \xi)$, that is to say, $\sum_{\lambda,\omega,\mu} p_{\lambda,\omega,\mu} L_{-\lambda}H_{-\omega}G_{-\mu-\epsilon}w \in  (C - \xi)M_{\epsilon}(\psi)$. Hence
$$
\sum_{\lambda,\omega,\mu} p_{\lambda,\omega,\mu} L_{-\lambda}H_{-\omega}G_{-\mu-\epsilon}w = (C - \xi) \sum_{\begin{subarray}
\ \ \lambda,\omega,\mu\\
0 \leq i \leq k
\end{subarray}} q_{\lambda,\omega,\mu,i} C^i L_{-\lambda}H_{-\omega}G_{-\mu-\epsilon} w
$$
for some $k \in \mathbb{Z}^+$ and $q_{\lambda,\omega,\mu,i} \in \mathbb{C}$. This expression can be rewritten as
$$
\begin{aligned}
0~&{=\sum_{\lambda,\omega,\mu} (p_{\lambda,\omega,\mu} + \xi q_{\lambda,\omega,\mu,0}) L_{-\lambda}H_{-\omega}G_{-\mu-\epsilon}w}\\
 &{+ \sum_{\begin{subarray}
\ \ \lambda,\omega,\mu\\
 1 \leq i \leq k
 \end{subarray}} (\xi q_{\lambda,\omega,\mu,i} - q_{\lambda,\omega,\mu,i-1}) C^i L_{-\lambda}H_{-\omega}G_{-\mu-\epsilon}w}\\
&{-\sum_{\lambda,\omega,\mu} q_{\lambda,\omega,\mu, k} C^{k+1} L_{-\lambda}H_{-\omega}G_{-\mu-\epsilon} w}.
\end{aligned}
$$
Due to the fact that each of the basis vectors (in $M_{\epsilon}(\psi)$) in this linear combination are distinct, we conclude that
$$
q_{\lambda,\omega,\mu, k}=0,~\xi q_{\lambda,\omega,\mu, i} - q_{\lambda,\omega,\mu, i-1}=0~{\rm and}~p_{\lambda,\omega,\mu} - \xi q_{\lambda,\omega,\mu,0}=0,
$$
and thus $p_{\lambda,\omega,\mu}=0$ for all $(\lambda,\omega,\mu) \in \mathcal{P}\times\mathcal{P}\times\mathcal{Q}$.

With this fact now proved, it is possible to use the same argument as in Theorem \ref{P 3.6} to complete the proof. In fact, 
we simply replace the polynomials $p_{\lambda,\omega,\mu} (C)$ in $C$ with scalars $p_{\lambda,\omega,\mu}$ whenever necessary.
\end{proof}

Since $\overline{w}\notin \langle G_{-\epsilon}\overline{w}\rangle$, then the Whittaker module generated by $G_{-\epsilon}\overline{w}$ is a proper submodule of $L_{\epsilon}(\psi,\xi)$. The vector $G_{-\epsilon}\overline{w}$ is called the degenerate Whittaker vector. Let $\widetilde{{L_{\epsilon}(\psi,\xi)}}=L_{\epsilon}(\psi,\xi)/\langle G_{-\epsilon}\overline{w}\rangle$, then $\widetilde{{L_{\epsilon}(\psi,\xi)}}$ is a Whittaker module of type $\psi$ with cyclic Whittaker vector $\widetilde{w}=\widetilde{1\otimes1}$.

\section{Simple Whittaker modules}
In this section, we prove the simplicity of the modules $\widetilde{L_{\epsilon}(\psi,\xi)}$ and establish that they form a complete set of simple Whittaker modules, up to isomorphism. Additionally, we will provide a criterion for the irreducibility of Whittaker modules.

Let us consider a fixed Lie superalgebra homomorphism $\psi: \mathfrak{q}_{\epsilon}^+ \rightarrow \mathbb{C}$ and a Whittaker module $V$ of type $\psi$. We can view  $V$ as a $\mathfrak{q}_{\epsilon}^+$-module by restricting the action of $\mathfrak{q}_{\epsilon}$ on $V$. To further define the action of $\mathfrak{q}_{\epsilon}^+$ on $V$, we introduce a new operation known as the dot action, denoted by the symbol $\cdot$. This dot action operates on elements of $\mathfrak{q}_{\epsilon}^+$ and $V$ according to the rule
$$
x \cdot v = xv - \psi (x) v,~{\rm for}~x \in \mathfrak{q}_{\epsilon}^+~{\rm and}~v \in V.
$$
Therefore, if we consider a Whittaker module $V$ as a $\mathfrak{q}_{\epsilon}^+$-module under the dot action, it follows that $E_n \cdot v = E_nv - \psi(E_n)v=[E_n,u]w$ for $E_n=L_n,H_n$, $n\in \mathbb{Z}_+$ or $E_n=G_{n}$, $n \in \mathbb{Z}_{+}-\epsilon$ and
$v=uw \in V$.

\begin{lemm}\label{L 4.2}
If $n>0$, then $L_n$, $H_n$ and $G_{n-\epsilon}$ are locally nilpotent on $V$ under the dot action.
\end{lemm}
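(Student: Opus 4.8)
The plan is to show that each of the operators $L_n$, $H_n$, $G_{n-\epsilon}$ (for $n>0$) kills an arbitrary vector $v=uw \in V$ after finitely many applications of the dot action. By Lemma~\ref{L 2.5}(i), it suffices to prove this for the universal module $V = M_{\epsilon}(\psi)$, since any Whittaker module of type $\psi$ is a quotient of $M_{\epsilon}(\psi)$ and local nilpotence passes to quotients. By linearity, I only need to treat a single PBW basis vector $v = p(C)L_{-\lambda}H_{-\omega}G_{-\mu-\epsilon}w$, and since $S(C)$ is central it factors out harmlessly. The key observation, recorded just before this lemma, is that the dot action of $E_n$ is exactly the adjoint-bracket action on the $U(\mathfrak{b}_\epsilon^-)$-part: $E_n \cdot (uw) = (E_n - \psi(E_n))(uw) = [E_n, u]w$, so iterating the dot action amounts to iterating the derivation $[E_n, -]$.

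The engine of the argument is the degree-drop already established in Lemmas~\ref{L 3.4}, \ref{L 3.3}, and \ref{L 3.5}. First I would treat $L_n$ and $H_n$ together: Lemma~\ref{L 3.4}(i) gives $\mathrm{max\,deg}([L_n,L_{-\lambda}H_{-\omega}G_{-\mu-\epsilon}]w) \le |\lambda+\omega+\mu+\epsilon| - n + 2$, and Lemma~\ref{L 3.3}(i) gives the analogous bound $|\lambda+\omega+\mu+\epsilon| - n + 1$ for $H_n$. Thus for $n \ge 3$ (respectively $n\ge 2$) a single dot action strictly lowers the maximal degree, and for $n>0$ the degree drops by $n-2$ (resp.\ $n-1$) each time, so after enough applications the degree falls below $0$, forcing the vector to vanish. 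For $G_{n-\epsilon}$, Lemma~\ref{L 3.5} gives $\mathrm{max\,deg} \le |\lambda+\omega+\mu+\epsilon| - n + \epsilon$, which is again a strict decrease whenever $n$ is large, so the same descent terminates. Since each dot application lands back in the span of finitely many PBW basis vectors of bounded degree, the process must reach $-\infty$, i.e.\ zero, in finitely many steps.

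\textbf{The main obstacle} is the small-index cases, specifically $L_1, L_2$ and $H_1$, where the degree bounds in Lemmas~\ref{L 3.4}(i) and \ref{L 3.3}(i) are \emph{not} strictly decreasing (the bound can equal or exceed the starting degree). Here the naive ``degree drops'' argument fails and I must work harder. The remedy is to use the refined secondary invariant: Lemma~\ref{L 3.3}(ii) tracks $\max_{L_0}$, the maximal power of $L_0$ appearing, and shows that applying $H_1$ strictly decreases $\lambda(0)$ while the leading degree is controlled. So for $H_1$ I would induct on the pair $\bigl(\mathrm{max\,deg}(v), \max_{L_0}(v)\bigr)$ ordered lexicographically: repeated dot action with $H_1$ strips off the $L_0$-factors one at a time (via the coefficient $-\lambda(0)\psi(H_1)$, nonzero since $\psi(H_1)\neq 0$), eventually reaching vectors with no $L_0$ in the top-degree part, after which the degree genuinely drops. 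A parallel finer analysis, distinguishing the cases in Lemma~\ref{L 3.4}(ii)--(iii), handles $L_1$ and $L_2$: the leading contribution is a nonzero multiple of $\psi(H_1)$ or $\psi(L_1),\psi(L_2)$ times a vector of strictly smaller combinatorial complexity, so a double induction on degree together with the relevant multiplicity counts $\lambda(k),\omega(k)$ closes the argument. The nonvanishing hypotheses $\psi(L_1),\psi(L_2),\psi(H_1)\neq 0$ are exactly what guarantee these leading coefficients do not accidentally cancel, which is why the argument terminates.
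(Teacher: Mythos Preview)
Your approach via the degree-drop Lemmas \ref{L 3.4}, \ref{L 3.3}, \ref{L 3.5} differs from the paper's and is considerably more laborious. The paper argues uniformly for all $n>0$ with no small-index casework: since $E_n^{\,\cdot k}\cdot(L_{-\lambda}H_{-\omega}G_{-\mu-\epsilon}w) = \mathrm{ad}_{E_n}^{k}(L_{-\lambda}H_{-\omega}G_{-\mu-\epsilon})w$ and each bracket in $\mathfrak{q}_\epsilon$ has length $\le 1$, iterated $\mathrm{ad}_{E_n}$ never increases the \emph{length} $s:=l(\lambda)+l(\omega)+l(\mu)$ of the PBW monomials while shifting the $L_0$-weight up by $nk$. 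After re-ordering each monomial as $E_{a_1}\cdots E_{a_s}$ with $a_1\le\cdots\le a_s$, the rightmost factor hits $w$ first; once $nk - |\lambda+\omega+\mu+\epsilon| > 2s$ one is forced to have $a_s>2$, and then $E_{a_s}w=0$. The invariant that does the work is the pair (weight, length), not $\mathrm{max\,deg}$ alone, and this is precisely what your degree-only bounds miss when $n\le 2$.

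More seriously, your final sentence has the logic backwards. You write that the nonvanishing of $\psi(L_1),\psi(L_2),\psi(H_1)$ ``guarantees these leading coefficients do not accidentally cancel, which is why the argument terminates.'' But for local nilpotence you want the vector to \emph{vanish}; a lexicographic descent on $(\mathrm{max\,deg},\max_{L_0})$ terminates because every term of the output has strictly smaller invariant, irrespective of whether the leading coefficient is zero---a vanishing leading coefficient would only help. The hypotheses $\psi(L_i),\psi(H_1)\ne 0$ play no role whatsoever in Lemma~\ref{L 4.2} (the paper's proof never invokes them); they are used in Theorem~\ref{P 3.6} to show the opposite, that certain vectors are \emph{not} Whittaker vectors. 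Coupled with the fact that Lemmas~\ref{L 3.4}(ii)--(iii) apply only under restrictive vanishing conditions on $\lambda(i),\omega(i)$ and do not directly yield a descent for $L_1,L_2$ on arbitrary monomials, the small-$n$ portion of your sketch remains a gap rather than a proof.
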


\begin{proof}
Let $E_n=L_n,H_n$, $n\in \mathbb{Z}_+$ or $E_n=G_n$, $n \in \mathbb{Z}_{+}-\epsilon$. Since
$$
V = {\rm span}_{S(C)} \{L_{-\lambda}H_{-\omega}G_{-\mu-\epsilon}w \mid (\lambda,\omega,\mu) \in \mathcal{P}\times\mathcal{P}\times\mathcal{Q}\},
$$
it is sufficient to show that some power of $(E_n - \psi(E_n)1)$ annihilates $L_{-\lambda}H_{-\omega}G_{-\mu-\epsilon}w$.
We observe that
$$
(E_n-\psi(E_n)1)^k L_{-\lambda}H_{-\omega}G_{-\mu-\epsilon}w={\rm ad}_{E_n}^k(L_{-\lambda}H_{-\omega}G_{-\mu-\epsilon})w
$$
and ${\rm ad}_{E_n}^k(L_{-\lambda}H_{-\omega}G_{-\mu-\epsilon})\in U(\mathfrak{q}_{\epsilon})_{-|\lambda+\omega+\mu+\epsilon| + nk}$. Furthermore, ${\rm ad}_{E_n}^k(L_{-\lambda}H_{-\omega}G_{-\mu-\epsilon})$ can be expressed as a sum of terms of the form $E_{a_1}\cdots E_{a_s}$, where $s\leq l(\lambda)+l(\omega)+l(\mu)$ and $a_1\leq \cdots \leq a_s$. Now $E_{a_1}\cdots E_{a_s}w=0$ for $a_s>2$. If $a_1\leq \cdots \leq a_s \leq2$, then $E_{a_1}\cdots E_{a_s}\in U(\mathfrak{q}_{\epsilon})_m$, where $m=a_1+\cdots+a_s\leq2s$. Therefore, by choosing a $k$ sufficiently large such that
$$
2 \cdot (l(\lambda)+l(\omega)+l(\mu)) < -|\lambda+\omega+\mu+\epsilon| + nk,
$$
we ensure that $a_s>2$, and thus $E_{a_1}\cdots E_{a_s}w=0$. This finishes the proof.
\end{proof}

\begin{lemm}\label{L 4.3}
Let $\lambda,\omega \in \mathcal{P},\mu \in \mathcal{Q},i>0$ and $E_n=L_n,H_n$, , $n \in \mathbb{Z}_{+}$ or $E_n=G_n$, $n \in \mathbb{Z}_{+}-\epsilon$.
\begin{itemize}
\item[(i)] For all $n>0$, $E_n \cdot (C^i L_{-\lambda}H_{-\omega}G_{-\mu-\epsilon} w) \in {\rm span}_{\mathbb{C}} \{ C^j L_{-\lambda'}H_{-\omega'}G_{-\mu'-\epsilon} w \mid |\lambda'+\omega'+\mu'+\epsilon| + \lambda'(0) \leq |\lambda+\omega+\mu+\epsilon| + \lambda(0), j=i,i+1 \}$.
\item[(ii)]  If $n > |\lambda+\omega+\mu+\epsilon|+2$, then $E_n \cdot (L_{-\lambda}H_{-\omega}G_{-\mu-\epsilon}w) = 0$.
\end{itemize}
\end{lemm}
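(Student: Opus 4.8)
The plan is to reduce both parts to the behaviour of the adjoint commutator $[E_n,L_{-\lambda}H_{-\omega}G_{-\mu-\epsilon}]$ on $w$, which is exactly what Lemmas \ref{L 3.3}, \ref{L 3.4} and \ref{L 3.5} control. Put $u=L_{-\lambda}H_{-\omega}G_{-\mu-\epsilon}$, so that $C^{i}u\in U(\mathfrak{b}_{\epsilon}^-)$. Since $C$ is central, the definition of the dot action together with Lemma \ref{L 3.1} gives, for each admissible $E_n$,
$$
E_n\cdot(C^{i}uw)=(E_n-\psi(E_n))(C^{i}uw)=[E_n,C^{i}u]w=C^{i}[E_n,u]w .
$$
Hence it is enough to understand $C^{i}[E_n,u]w$, and in particular the factor $C^{i}$ plays no role in the degree estimates.

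For part (ii) I would invoke the degree bounds already established: Lemma \ref{L 3.4}(i) gives ${\rm max~deg}([L_n,u]w)\le|\lambda+\omega+\mu+\epsilon|-n+2$, Lemma \ref{L 3.3}(i) gives the same bound with $+1$ in place of $+2$ for $H_n$, and Lemma \ref{L 3.5} (with $n=k-\epsilon$) gives ${\rm max~deg}([G_n,u]w)\le|\lambda+\omega+\mu+\epsilon|-n$. If $n>|\lambda+\omega+\mu+\epsilon|+2$ every one of these bounds is negative; since each nonzero basis vector has degree $\ge0$, this forces $[E_n,u]w=0$, and multiplying by the central $C^{i}$ leaves it $0$.

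For part (i) I would expand $[E_n,u]$ by the (super)derivation property so that $E_n$ is bracketed against a single factor of $u$, push the resulting operator into PBW order, and apply it to $w$. Two invariants have to be tracked. First, the $C$-power: a central term arises only from \eqref{e:re1}, \eqref{e:re2}, \eqref{e:re6}, and only when two modes cancel; once such a term appears the operator producing it is absorbed into $C$, so each resulting monomial acquires at most one new $C$, which gives $j\in\{i,i+1\}$ (and for $E_n=G_n$, \eqref{e:re7} shows no $C$ occurs at all). Second, the refined degree. By weight conservation a surviving term has the form $\psi(\textrm{raising part})\,C^{j}L_{-\lambda'}H_{-\omega'}G_{-\mu'-\epsilon}w$ with
$$
|\lambda'+\omega'+\mu'+\epsilon|=|\lambda+\omega+\mu+\epsilon|-n+(\textrm{weight of the surviving raising part}),
$$
and because $E_n$ is a single operator at most one raising operator can ever survive, necessarily of weight $\le n$; hence $|\lambda'+\omega'+\mu'+\epsilon|\le|\lambda+\omega+\mu+\epsilon|$. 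It remains to control $\lambda'(0)$. Inspecting the relations, a new $L_0$ is created only by $[L_n,L_{-n}]=-2nL_0+(\textrm{scalar})\,C$ (directly, or when a freshly created positive $L$-mode meets its opposite during reordering), whereas $H_n$ and $G_n$ create no $L_0$; each creation deletes a mode of size $n\ge1$, so it lowers $|\lambda'+\omega'+\mu'+\epsilon|$ by $n$ while raising $\lambda'(0)$ by $1$. Moreover equality $|\lambda'+\omega'+\mu'+\epsilon|=|\lambda+\omega+\mu+\epsilon|$ can occur only when a zero mode $L_0$ is consumed, via $[L_n,L_0]=-nL_n$ or $[H_n,L_0]=-nH_n$ with $n\in\{1,2\}$, which strictly lowers $\lambda'(0)$. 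In every case $|\lambda'+\omega'+\mu'+\epsilon|+\lambda'(0)\le|\lambda+\omega+\mu+\epsilon|+\lambda(0)$, which is the claim.

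The main obstacle is the last bookkeeping step: coordinating the decrease of $|\lambda'+\omega'+\mu'+\epsilon|$ with the increase of $\lambda'(0)$ along every reordering path, together with the super-signs attached to the odd generators $G$. I would make this rigorous by inducting on the length $l(\lambda)+l(\omega)+l(\mu)$ and peeling off one factor of $u$ at a time, exactly as in the proofs of Lemmas \ref{L 3.3}--\ref{L 3.5}, so that at each step only one elementary bracket is analysed and the invariant $|\,\cdot\,|+\lambda'(0)$ is verified.
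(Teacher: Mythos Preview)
Your approach is essentially the same as the paper's: both reduce to $i=0$ via centrality of $C$, both obtain (ii) from the very degree bounds in Lemmas~\ref{L 3.4}, \ref{L 3.3}, \ref{L 3.5} (you cite them directly, the paper redoes the expansion), and for (i) both peel off one factor of $u$ and control the invariant $|\cdot|+\lambda'(0)$ inductively. The only organizational difference is that the paper treats the cases $E_n=G_n,H_n,L_n$ separately and, for the $L_n$ case, inducts on $|\lambda+\omega+\mu+\epsilon|+\lambda(0)$ rather than on the length $l(\lambda)+l(\omega)+l(\mu)$ you propose; either induction parameter works.
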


\begin{proof}
(i)  Let $E_n = L_n, H_n$, or $G_n$. Consider the action of $E_n$ on $C^i L_{-\lambda}H_{-\omega}G_{-\mu} w$ under the dot action. Using the definition of the dot action, we have,
$$
E_n \cdot (C^i L_{-\lambda}H_{-\omega}G_{-\mu-\epsilon} w)=C^i(E_n \cdot (L_{-\lambda}H_{-\omega}G_{-\mu-\epsilon} w)),
$$
it is sufficient to show that (i) holds for $i=0$. The result for $l(\lambda)+l(\omega)+l(\mu)=0$ is obvious. Now we prove the result for $l(\lambda)+l(\omega)+l(\mu)>0$.

{\bf{Case I :}} $E_n=G_n$, $n \in \mathbb{Z}_{+}-\epsilon$.

In this case, ${G_n \cdot (L_{-\lambda}H_{-\omega}G_{-\mu-\epsilon}w)}=[G_n,L_{-\lambda}H_{-\omega}G_{-\mu'-\epsilon}]w$. By similar calculation in Lemma \ref{L 3.5}, if $n>|\lambda+\omega|$, then $[G_n,L_{-\lambda}H_{-\omega}G_{-\mu'-\epsilon}]w=0$, and if $0<n\leq|\lambda+\omega|$, we have
$$
[G_n,L_{-\lambda}H_{-\omega}G_{-\mu'-\epsilon}]w=\sum_{\lambda',\omega',\mu}p_{\lambda',\omega',\mu'}L_{-\lambda'}H_{-\omega'}G_{-\mu'-\epsilon}w,
$$
where $p_{\lambda',\omega',\mu'} \in \mathbb{C}$ and $\lambda',\omega'\in\mathcal{P}$ satisfy $|\lambda'+\omega'+\mu+\epsilon|=|\lambda+\omega+\mu+\epsilon|-n$ and $\lambda'(0)<\lambda(0)$.

{\bf{Case II :}} $E_n=H_n$, $n \in \mathbb{Z}_+$.

In this case, using similar calculation in Lemma \ref{L 3.3}, we have
$$
\begin{aligned}
{H_n \cdot (L_{-\lambda}H_{-\omega}G_{-\mu-\epsilon}w)}&{=[H_n,L_{-\lambda}H_{-\omega}G_{-\mu-\epsilon}]w}
=\sum p_{{\lambda'},\omega',\mu'}(C)L_{-\lambda'}H_{-\omega'}G_{-\mu'-\epsilon}w,
\end{aligned}
$$
where deg($p_{{\lambda'},\omega',\mu'}(C)$)= 0 or 1, $\lambda', \omega' \in \mathcal{P}$ and $\mu' \in \mathcal{Q}$  satisfy
$$
|\lambda+\omega+\mu+\epsilon|-n+1 \geq |\lambda'+\omega'+\mu'+\epsilon|,
$$
and $\lambda'(0)<\lambda(0)$. Since $n \geq 1$, then $|\lambda'+\omega'+\mu'+\epsilon|+\lambda'(0)<|\lambda+\omega+\mu+\epsilon|+\lambda(0)$.

{\bf{Case III :}} $E_n=L_n$, $n \in \mathbb{Z}_+$.

If $\lambda=\bar{0}$ the result follows from Lemma \ref{L 3.4}. If $|\lambda|=0$, using similar calculation in Lemma \ref{L 3.3}, we have
$$
\begin{aligned}
{L_n \cdot (L_{0}^{m}H_{-\omega}G_{-\mu-\epsilon}w)}&{=[L_n,L_{0}^{m}H_{-\omega}G_{-\mu-\epsilon}]w}\\
&{=\sum_{\lambda',\omega',\mu'}p_{{\lambda{'}},\omega^{'},\mu^{'}}(C)L_{-\lambda'}H_{-\omega'}G_{-\mu'-\epsilon}w}\\
&{+\sum_{\lambda'',\omega'',\mu''}p_{\lambda'',\omega'',\mu''}(C)L_{-\lambda''}H_{-\omega''}G_{-\mu''-\epsilon}L_nw},
\end{aligned}
$$
where deg($p_{{\lambda{'}},\omega^{'},\mu^{'}}(C)$)= 0 or 1, deg($p_{{\lambda{''}},\omega^{''},\mu^{''},k}(C)$)= 0 or 1, $|\lambda'+\omega'+\mu'+\epsilon|\leq|\lambda+\omega+\mu+\epsilon|-n+1$, $\lambda'(0)\leq\lambda(0)$ and $|\lambda''+\omega''+\mu''+\epsilon|=|\lambda+\omega+\mu+\epsilon|$, $\lambda''(0)\leq\lambda(0)-1$. Thus
$$
|\lambda'+\omega'+\mu'+\epsilon|+\lambda'(0)\leq|\lambda+\omega+\mu+\epsilon|+\lambda(0).
$$
$$
|\lambda''+\omega''+\mu''+\epsilon|+\lambda''(0)\leq|\lambda+\omega+\mu+\epsilon|+\lambda(0).
$$

Now, we suppose that $|\lambda|>0$ and prove this result by induction on $|\lambda+\omega+\mu+\epsilon|+\lambda(0)$. Let $m={\rm max}\{i \mid \lambda(i)>0\}$, and define $L_{-\lambda}=L_{-m}L_{-\lambda'}$. Then
$$
\begin{aligned}
{L_n \cdot (L_{-\lambda}H_{-\omega}G_{-\mu-\epsilon}w)}&{=[L_n,L_{-m}]L_{-\lambda'}H_{-\omega}G_{-\mu-\epsilon}w+L_{-m}[L_n,L_{-\lambda'}H_{-\omega}G_{-\mu-\epsilon}]w}\\
&{=-((m+n)L_{n-m}+\frac{n^3-n}{12}\delta_{m,n}C)L_{-\lambda'}H_{-\omega}G_{-\mu-\epsilon}w}\\
&{+L_{-m}[L_n,L_{-\lambda'}H_{-\omega}G_{-\mu-\epsilon}]w},
\end{aligned}
$$
where $|\lambda'|=|\lambda|-m$ and $\lambda'(0)=\lambda(0)$. So
$$
|\lambda'+\omega+\mu+\epsilon|+\lambda'(0)<|\lambda+\omega+\mu+\epsilon|+\lambda(0).
$$
By induction, $[L_n,L_{-\lambda'}H_{-\omega}G_{-\mu-\epsilon}]w \in {\rm span}_{\mathbb{C}}\{C^jL_{-\bar{\lambda}}H_{-\bar{\omega}}G_{-\bar{\mu}-\epsilon}w \mid |\bar{\lambda}+\bar{\omega}+\bar{\mu}+\epsilon|+\bar{\lambda}(0)\leq|\lambda'+\omega+\mu+\epsilon|+\lambda'(0),j=0,1\}$. It then follows that $L_{-m}[L_n,L_{-\lambda'}H_{-\omega}G_{-\mu-\epsilon}]w \in {\rm span}_{\mathbb{C}}\{C^jL_{-\bar{\lambda}}H_{-\bar{\omega}}G_{-\bar{\mu}-\epsilon}w \mid |\bar{\lambda}+\bar{\omega}+\bar{\mu}+\epsilon|+\bar{\lambda}(0)\leq|\lambda+\omega+\mu+\epsilon|+\lambda(0),j=0,1\}$, because $-m<0, |\lambda'|=|\lambda|-m$ and $\lambda'(0)=\lambda(0)$.

To see why $L_{n-m}L_{-\lambda'}H_{-\omega}G_{-\mu-\epsilon}w$ has the desired form, we consider two cases. If $n-m<0$, rewrite $L_{n-m}L_{-\lambda'}H_{-\omega}G_{-\mu-\epsilon}w$ in a linear combination of the basis of $M_{\epsilon}(\psi)$:
$$
L_{n-m}L_{-\lambda'}H_{-\omega}G_{-\mu-\epsilon}w=\sum_{\lambda''}p_{\lambda''}(C)L_{-\lambda''}H_{-\omega}G_{-\mu-\epsilon}w,
$$
where deg$(p_{\lambda''}(C))=0$ or 1,
$$
|\lambda''+\omega+\mu+\epsilon|=|\lambda'+\omega+\mu+\epsilon|+m-n=|\lambda+\omega+\mu+\epsilon|-n
$$
and $\lambda''(0)=\lambda'(0)=\lambda(0)$, then
$$
|\lambda''+\omega+\mu+\epsilon|+\lambda''(0)<|\lambda+\omega+\mu+\epsilon|+\lambda(0).
$$
If $n-m>0$, note that
$$
L_{n-m}L_{-\lambda'}H_{-\omega}G_{-\mu-\epsilon}w=\psi(L_{n-m})L_{-\lambda'}H_{-\omega}G_{-\mu-\epsilon}w+L_{n-m} \cdot (L_{-\lambda'}H_{-\omega}G_{-\mu-\epsilon}w).
$$
It follows from induction that $L_{n-m} \cdot (L_{-\lambda'}H_{-\omega}G_{-\mu-\epsilon}w)$ has the desired form, and therefore $L_{n-m}L_{-\lambda'}H_{-\omega}G_{-\mu-\epsilon}w$ has the correct form.

With respect to (ii), we have
$$
\begin{aligned}
{E_n \cdot (L_{-\lambda}H_{-\omega}G_{-\mu-\epsilon}w)}
&{=E_nL_{-\lambda}H_{-\omega}G_{-\mu-\epsilon}w-\psi(E_n)L_{-\lambda}H_{-\omega}G_{-\mu-\epsilon}w}\\
&{=\sum_{\lambda',\omega',\mu',s}p_{\lambda',\omega',\mu',s}L_{-\lambda'}H_{-\omega'}G_{-\mu'-\epsilon}E_s w}.
\end{aligned}
$$
Since $n > |\lambda+\omega+\mu+\epsilon|+2$, then $s>2$. Thus, $E_s w=\psi(E_s)w=0$, (ii) follows.
\end{proof}

\begin{coro}\label{C 4.4}
Suppose $V$ is a Whittaker module for $\mathfrak{q}_{\epsilon}$, and let $v \in V$.
Regarding $V$ as a $\mathfrak{q}_{\epsilon}^+$-module under the dot action, then $U(\mathfrak{q}_{\epsilon}^+) \cdot v$ is a finite-dimensional submodule of $V$.
\end{coro}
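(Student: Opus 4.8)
The plan is to exhibit an explicit finite-dimensional subspace $S \subseteq V$ that contains $v$ and is stable under the dot action of every element of $\mathfrak{q}_{\epsilon}^{+}$. Since $U(\mathfrak{q}_{\epsilon}^{+})\cdot v$ is, by definition, the smallest dot-invariant subspace containing $v$, this will force $U(\mathfrak{q}_{\epsilon}^{+})\cdot v \subseteq S$, which is exactly the assertion (dot-invariance under every $E_n$ spanning $\mathfrak{q}_{\epsilon}^{+}$ makes $S$ a $\mathfrak{q}_{\epsilon}^{+}$-submodule, hence it contains the cyclic submodule generated by $v$).

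First I would fix a cyclic Whittaker vector $w$ of $V$ and use the spanning set $V = {\rm span}_{\mathbb{C}}\{C^{i}L_{-\lambda}H_{-\omega}G_{-\mu-\epsilon}w\}$ from the discussion preceding Lemma \ref{L 4.2} to write $v$ as a finite linear combination of such monomials. From the finitely many monomials occurring in $v$ I extract two bounds,
$$D := \max\{|\lambda+\omega+\mu+\epsilon| + \lambda(0)\}, \qquad M := \max\{l(\lambda)+l(\omega)+l(\mu) + i\},$$
the maxima being over the monomials $C^{i}L_{-\lambda}H_{-\omega}G_{-\mu-\epsilon}w$ appearing in $v$, and set
$$S := {\rm span}_{\mathbb{C}}\Big\{ C^{j}L_{-\lambda'}H_{-\omega'}G_{-\mu'-\epsilon}w \ \Big|\ |\lambda'+\omega'+\mu'+\epsilon| + \lambda'(0) \le D,\ \ l(\lambda')+l(\omega')+l(\mu') + j \le M \Big\}.$$

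The crux is to verify that $S$ is finite-dimensional, contains $v$, and is dot-invariant. Finite-dimensionality comes from inspecting the two constraints: the bound $|\lambda'+\omega'+\mu'+\epsilon|\le D$ forces the positive parts of $\lambda',\omega'$ and the whole of $\mu'\in\mathcal{Q}$ to range over a finite set, while $\lambda'(0)\le D$, together with $l(\omega')\le M$ (hence $\omega'(0)\le M$) and $j\le M$, controls the three pieces of data not bounded by the size, namely the number of $L_0$'s, the number of $H_0$'s, and the power of $C$; thus only finitely many index tuples survive. That $v\in S$ is immediate. For dot-invariance it suffices to check $E_n\cdot S\subseteq S$ for the positive modes $E_n$ ($n>0$) spanning $\mathfrak{q}_{\epsilon}^{+}$: Lemma \ref{L 4.3}(i) preserves the first bound $|\,\cdot\,|+\lambda'(0)\le D$ and shows the $C$-power can rise only to $j$ or $j+1$, and one checks from the commutation relations that $l(\lambda')+l(\omega')+l(\mu')+j$ is non-increasing under $E_n\cdot(-)$, so the second bound is preserved as well.

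The main obstacle is exactly this second invariant. Lemma \ref{L 4.3}(i) alone only bounds the size-type quantity and says $j$ can creep up by one per step, which does not a priori bound $j$ along arbitrarily long words, and it says nothing about the growth of $\omega'(0)$. The resolution is the structural observation that the total length $l(\lambda')+l(\omega')+l(\mu')$ is non-increasing, and that the power of $C$ can strictly increase only when a bracket $[E_n,\,\cdot\,]$ produces a central term (via \eqref{e:re1}, \eqref{e:re2}, or the vanishing $[G_p,G_q]=0$ of \eqref{e:re7}, which keeps all bracket computations finite), an event that simultaneously lowers the length by one; consequently $l(\lambda')+l(\omega')+l(\mu')+j$ never increases, and this single quantity caps both $j$ and $\omega'(0)$ at once. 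Lemma \ref{L 4.2} and Lemma \ref{L 4.3}(ii) play a supporting role here, guaranteeing that only the finitely many modes $E_n$ with $n\le D+2$ act nontrivially on $S$, so the invariance check involves only finitely many generators. Having produced a finite-dimensional dot-invariant subspace $S$ containing $v$, we conclude $U(\mathfrak{q}_{\epsilon}^{+})\cdot v\subseteq S$, which is finite-dimensional.
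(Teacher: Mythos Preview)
Your argument is correct and, in fact, more careful than the paper's own proof, which consists of the single sentence ``This is a direct consequence of Lemma~\ref{L 4.3}.'' Lemma~\ref{L 4.3}(i) only shows that the quantity $|\lambda+\omega+\mu+\epsilon|+\lambda(0)$ is non-increasing and that the power of $C$ rises by at most one per application of a generator; by itself this leaves both the multiplicity $\omega'(0)$ of $H_0$ and the cumulative growth of the exponent $j$ unbounded under arbitrarily long words in $U(\mathfrak{q}_\epsilon^+)$. Your second invariant $l(\lambda')+l(\omega')+l(\mu')+j$ is exactly what is needed to close this gap: because every bracket in $\mathfrak{q}_\epsilon$ of two basis elements is either zero, central, or a scalar multiple of a single basis element, the total length can never increase, and a gain of one in the $C$-power is always paid for by a drop of one in length (the same reasoning applies to the PBW reordering steps and to a positive-index factor reaching $w$). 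Thus your finite-dimensional $S$ is genuinely dot-stable and contains $v$.

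In short, the paper's approach is the same in spirit---produce a dot-invariant finite-dimensional subspace via Lemma~\ref{L 4.3}---but your proof makes explicit the length-plus-central-degree invariant that the paper's one-line appeal to Lemma~\ref{L 4.3} leaves tacit. One minor point: you might state explicitly that the reordering needed to return a term to PBW form also preserves $l+j$, for the same bracket-structure reason you already isolate; this is the only place a reader might pause.
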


\begin{proof}
This is a direct consequence of Lemma \ref{L 4.3}.
\end{proof}

\begin{theo} \label{T 4.5}
Let $V$ be a Whittaker module for $\mathfrak{q}_{\epsilon}$, and let $S \subseteq V$ be a submodule. Then there exists a nonzero Whittaker vector $w' \in S$, where $w'$ corresponds to the same function $\psi$, for the module $V$ itself.
\end{theo}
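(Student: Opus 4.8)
The plan is to recast ``being a Whittaker vector of type $\psi$'' in terms of the dot action and then to manufacture such a vector inside a finite-dimensional piece of $S$ by an Engel-type nilpotency argument. The starting observation is that a vector $w'$ is a Whittaker vector of type $\psi$ precisely when $x\cdot w'=xw'-\psi(x)w'=0$ for every $x\in\mathfrak{q}_{\epsilon}^+$; thus Whittaker vectors are exactly the vectors annihilated by all of $\mathfrak{q}_{\epsilon}^+$ under the dot action. Since $\psi$ is a Lie superalgebra homomorphism into the abelian $\mathbb{C}$, and since $\psi$ vanishes on all odd elements by Lemma \ref{L 2.4}, a direct check gives $\psi([x,y])=0$ for all homogeneous $x,y$, from which one verifies that the dot action is a genuine $\mathfrak{q}_{\epsilon}^+$-module structure on $V$. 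In particular, a vector killed by a generating set of $\mathfrak{q}_{\epsilon}^+$ is automatically killed by all of $\mathfrak{q}_{\epsilon}^+$, so it will suffice to produce a nonzero common null vector.

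Next I would fix any nonzero $v\in S$ and set $W=U(\mathfrak{q}_{\epsilon}^+)\cdot v$. Because $S$ is a $\mathfrak{q}_{\epsilon}$-submodule it is stable under the action of $\mathfrak{q}_{\epsilon}^+$ and under scalars, hence under the dot action, so $W\subseteq S$; by Corollary \ref{C 4.4} the space $W$ is finite-dimensional, and it is nonzero as $v\in W$. The problem is thereby reduced to finding a nonzero vector of the finite-dimensional dot-module $W$ annihilated by all of $\mathfrak{q}_{\epsilon}^+$, which is exactly what the super analogue of Engel's theorem provides, \emph{once one knows that every homogeneous element of $\mathfrak{q}_{\epsilon}^+$ acts nilpotently on $W$}.

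Verifying this nilpotency is the crux, and here I would exploit the filtration supplied by Lemma \ref{L 4.3}(i). Put $D(\lambda,\omega,\mu)=|\lambda+\omega+\mu+\epsilon|+\lambda(0)$ and let $F_d$ be the span of the basis vectors $C^tL_{-\lambda}H_{-\omega}G_{-\mu-\epsilon}w$ with $D\le d$; Lemma \ref{L 4.3}(i) says the dot action preserves this filtration. Inspecting the three cases of that lemma, the operators $H_n\,(n>0)$, $G_{n-\epsilon}\,(n>0)$ and $L_n\,(n\ge 2)$ all \emph{strictly} decrease $D$ (for instance $[L_2,L_{-2}]=-4L_0$ lowers the degree by $2$ while raising $\lambda(0)$ by only $1$), so each maps $F_d$ into $F_{d-1}$ and acts as $0$ on the associated graded $\mathrm{gr}\,W$; the single exception is $L_1$, which may preserve $D$ through $[L_1,L_{-1}]=-2L_0$. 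Consequently every even element $x=\sum_{n\ge1}(a_nL_n+b_nH_n)$ induces on $\mathrm{gr}\,W$ only a scalar multiple of the operator induced by $L_1$. Since $L_1$ is locally nilpotent by Lemma \ref{L 4.2} and $W$ is finite-dimensional, that operator is nilpotent on $\mathrm{gr}\,W$, so $x$ is nilpotent on $\mathrm{gr}\,W$ and hence on $W$ (an endomorphism preserving a finite filtration and nilpotent on the associated graded is nilpotent on the whole space). Likewise every odd element lies in the span of the $G_{n-\epsilon}$ and so sends $F_d$ into $F_{d-1}$, and is therefore nilpotent too.

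With all of $(\mathfrak{q}_{\epsilon}^+)_{\bar 0}$ acting by nilpotent operators on the finite-dimensional superspace $W$, the super version of Engel's theorem yields a nonzero $w'\in W$ with $x\cdot w'=0$ for all $x\in\mathfrak{q}_{\epsilon}^+$. By the first paragraph $w'$ is a Whittaker vector of type $\psi$, and $w'\in W\subseteq S$, as required. I expect the genuine obstacle to be precisely the nilpotency step: the generators $L_1,L_2,H_1$ are only known to be \emph{locally} nilpotent, and since a linear combination of nilpotent operators need not be nilpotent, one really needs the refined invariant $D$ of Lemma \ref{L 4.3} to isolate $L_1$ as the only direction that can preserve $D$ and then pass to the associated graded before Engel's theorem can be invoked.
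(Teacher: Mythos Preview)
Your proposal is correct and shares the paper's architecture: recast Whittaker vectors as null vectors for the dot action, pass to the finite-dimensional cyclic $\mathfrak{q}_\epsilon^+$-submodule $U(\mathfrak{q}_\epsilon^+)\cdot v\subseteq S$ via Corollary~\ref{C 4.4}, and then apply Engel's theorem. The only divergence is in how the nilpotency hypothesis of Engel is secured. The paper does not touch the filtration by $D=|\lambda+\omega+\mu+\epsilon|+\lambda(0)$ at all; instead it uses Lemma~\ref{L 4.3}(ii) to observe that $E_n\cdot F=0$ for all sufficiently large $n$, so that the image of $\mathfrak{q}_\epsilon^+$ in $\mathfrak{gl}(F)$ is finite-dimensional, and then simply cites Lemma~\ref{L 4.2} for the nilpotency of each individual generator $E_n$ before invoking Engel directly. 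Your worry that ``a linear combination of nilpotent operators need not be nilpotent'' is legitimate, and your associated-graded argument isolating $L_1$ does resolve it; but there is a cheaper bridge implicit in the paper's setup: since $\mathfrak{q}_\epsilon^+$ is positively graded and only finitely many graded pieces survive in $\mathfrak{gl}(F)$, the image is a \emph{nilpotent} Lie superalgebra, so over $\mathbb{C}$ a common eigenvector exists automatically, and its weight must vanish on the nilpotent generators $E_n$ and hence on everything. Thus your filtration maneuver---and in particular the strict-decrease claim for $L_n$ with $n\ge 2$, which is true but is not literally contained in Lemma~\ref{L 4.3}(i)---can be bypassed entirely.
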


\begin{proof}
Regard $V$ as a $\mathfrak{q}_{\epsilon}^+$-module under the dot-action. Let $0 \neq v
\in S$, and let $F$ be the submodule of $S$ generated by $v$ under the dot-action of $\mathfrak{q}_{\epsilon}^+$. By Corollary \ref{C 4.4}, $F$ is a finite-dimensional $\mathfrak{q}_{\epsilon}^+$-module.~Lemma \ref{L 4.3} implies that $E_n \cdot F = 0$ for sufficiently large $n$, so the quotient of $\mathfrak{q}_{\epsilon}^+$ by the kernel of this
action is also finite-dimensional. Note that $E_n$ is locally nilpotent on $V$ (and thus on $F$) under this action by Lemma \ref{L 4.2}.~Thus Engel's Theorem (see \cite{4}) implies that there exists a nonzero $w' \in F \subseteq S$ such that $x \cdot w'=0$ for all $x \in \mathfrak{q}_{\epsilon}^+$. By definition of the dot action, $w'$ is a Whittaker vector.
\end{proof}

\begin{coro}\label{C 4.6}
For any $\xi \in \mathbb{C}$, $\widetilde{L_{\epsilon}(\psi, \xi)}$ is simple.
\end{coro}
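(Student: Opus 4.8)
The plan is to deduce simplicity from two ingredients already available: the existence statement of Theorem~\ref{T 4.5} and a sharp description of the Whittaker vectors of the quotient. Let $S$ be a nonzero submodule of $\widetilde{L_{\epsilon}(\psi,\xi)}$. Since $\widetilde{L_{\epsilon}(\psi,\xi)}$ is itself a Whittaker module of type $\psi$ with cyclic Whittaker vector $\widetilde{w}$, Theorem~\ref{T 4.5} (applied with $V=\widetilde{L_{\epsilon}(\psi,\xi)}$) produces a nonzero Whittaker vector $w'\in S$ of type $\psi$. If one shows that every Whittaker vector of $\widetilde{L_{\epsilon}(\psi,\xi)}$ is a scalar multiple of $\widetilde{w}$, then $w'=c\widetilde{w}$ with $c\neq 0$, so $\widetilde{w}\in S$; as $\widetilde{w}$ generates $\widetilde{L_{\epsilon}(\psi,\xi)}$, this forces $S=\widetilde{L_{\epsilon}(\psi,\xi)}$ and simplicity follows. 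Thus the entire proof reduces to the claim that the space of Whittaker vectors of $\widetilde{L_{\epsilon}(\psi,\xi)}$ equals $\mathbb{C}\widetilde{w}$.

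To establish that claim I would re-run the leading-term computation of Theorem~\ref{P 3.6} and Theorem~\ref{P 3.7} inside the quotient. First I would record that, because $G_{-\epsilon}\overline{w}$ is a Whittaker vector, $\langle G_{-\epsilon}\overline{w}\rangle = U(\mathfrak{b}_{\epsilon}^-)G_{-\epsilon}\overline{w}$, and a PBW reordering together with $G_{-\epsilon}^2=0$ shows that this submodule is exactly the span of the basis vectors $L_{-\lambda}H_{-\omega}G_{-\mu-\epsilon}\overline{w}$ with $\mu(0)=1$. Hence $\widetilde{L_{\epsilon}(\psi,\xi)}$ carries the PBW-type basis indexed by $(\lambda,\omega,\mu)$ with $\mu(0)=0$. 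I would then write a Whittaker vector as $\widetilde{w}'=\sum p_{\lambda,\omega,\mu}L_{-\lambda}H_{-\omega}G_{-\mu-\epsilon}\widetilde{w}$ (sum over $\mu(0)=0$), set $N=\max|\lambda+\omega+\mu|$ over the nonzero terms, and apply the operators $(H_{k+1}-\psi(H_{k+1}))$, $(L_{k+1}-\psi(L_{k+1}))$ and $(G_{k-\epsilon}-\psi(G_{k-\epsilon}))$ exactly as in the case analysis of Theorem~\ref{P 3.6}, using Lemmas~\ref{L 3.3}, \ref{L 3.4} and \ref{L 3.5} to isolate the nonzero top-degree component.

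The main obstacle is the interaction of this case analysis with the quotient: every leading term produced in Theorem~\ref{P 3.6} must be checked to survive modulo $\langle G_{-\epsilon}\overline{w}\rangle$, i.e. to have $\mu(0)=0$. When the obstruction is detected by an even raising operator ($H_{k+1}$, $L_{k+1}$ or $H_{l+1}$), the leading term retains the original $\mu$ and only lowers $\lambda$ or $\omega$, so it has $\mu(0)=0$ and is nonzero in $\widetilde{L_{\epsilon}(\psi,\xi)}$; these subcases transfer verbatim. The genuinely delicate situation is the purely odd sector (Case III of Theorem~\ref{P 3.6}), where a nonzero $G$-component is detected by applying $H_1$: since $[H_1,G_{-\mu-\epsilon}]$ lowers one $G$-mode by one, a part of $\mu$ equal to $1$ contributes a factor $G_{-\epsilon}$, which is collapsed to $0$ in $\widetilde{L_{\epsilon}(\psi,\xi)}$. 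Showing that the components with $|\mu|>0$ and $\mu(0)=0$ are nevertheless obstructed by some $(E_m-\psi(E_m))$, after the degenerate direction has been killed, is the crux of the argument and the step I expect to absorb essentially all the difficulty; it is exactly here that the precise structure of $\langle G_{-\epsilon}\overline{w}\rangle$ and the relation $G_{-\epsilon}^2=0$ must be exploited, and the bookkeeping on $\mu(0)$ done carefully.

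Granting the classification, the conclusion is immediate: the vector $w'\in S$ furnished by Theorem~\ref{T 4.5} lies in $\mathbb{C}\widetilde{w}$, whence $\widetilde{w}\in S$, and cyclicity of $\widetilde{w}$ yields $S=\widetilde{L_{\epsilon}(\psi,\xi)}$. Therefore $\widetilde{L_{\epsilon}(\psi,\xi)}$ is simple for every $\xi\in\mathbb{C}$.
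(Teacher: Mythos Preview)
Your high-level strategy---combine Theorem~\ref{T 4.5} with a description of the Whittaker vectors and then use cyclicity---matches the paper's, but the execution diverges at the key step. You descend to $\widetilde{L_{\epsilon}(\psi,\xi)}$ first and then attempt to classify its Whittaker vectors from scratch; this forces you to replay the leading-term analysis of Theorem~\ref{P 3.6} while tracking which terms survive modulo $\langle G_{-\epsilon}\overline{w}\rangle$, and you rightly flag the odd sector (the $\mu(0)$ bookkeeping) as the delicate and, in your write-up, unfinished step. The paper bypasses this completely: it stays in $L_{\epsilon}(\psi,\xi)$, where Theorem~\ref{P 3.7} already tells us the Whittaker vectors are exactly $\mathrm{span}_{\mathbb{C}}\{\overline{w},\,G_{-\epsilon}\overline{w}\}$. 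Applying Theorem~\ref{T 4.5} there shows that every nonzero submodule of $L_{\epsilon}(\psi,\xi)$ contains $\langle G_{-\epsilon}\overline{w}\rangle$, and the simplicity of the quotient then drops out of the submodule lattice correspondence. In particular, the ``crux'' you isolate never arises: Theorem~\ref{P 3.7} is invoked as a black box upstairs rather than reproved downstairs. Your route can be made to work, but it duplicates effort that the paper sidesteps with a one-line appeal to the correspondence theorem.
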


\begin{proof}
Let $S$ be a nonzero submodule of $L_{\epsilon}(\psi, \xi)$. Since $C \in \mathfrak{q}_{\epsilon}$ acts by the scalar $\xi$ on $L_{\epsilon}(\psi, \xi)$, it follows from Theorem \ref{T 4.5} that there exists a nonzero Whittaker vector $\overline{w}' \in S$. Theorem \ref{P 3.7} implies that $\overline{w}' = c\overline{w}$ or $cG_{-\epsilon}\overline{w}$ for some $c \in \mathbb{C}$. Since $\overline{w}$ is a cyclic Whittaker module, then $S=L_{\epsilon}(\psi, \xi)$ or $S=\langle G_{-\epsilon}\overline{w}\rangle$. We can get $\langle G_{-\epsilon}\overline{w}\rangle$ lies in every submodule of $L_{\epsilon}(\psi,\xi)$. Therefore, $S/\langle G_{-\epsilon}\overline{w}\rangle$ runs over all submodules of $\widetilde{L_{\epsilon}(\psi, \xi)}$ when $S$ runs over all submodules of $L_{\epsilon}(\psi,\xi)$. However, $S/\langle G_{-\epsilon}\overline{w}\rangle=\widetilde{L_{\epsilon}(\psi, \xi)}~{\rm or}~0$, thus $\widetilde{L_{\epsilon}(\psi, \xi)}$ is simple.
\end{proof}

\begin{theo}\label{C 4.7}
Let $\psi : \mathfrak{q}_{\epsilon}^+ \to \mathbb{C}$ be a Lie superalgebra homomorphism and
$\psi (L_1)$, $\psi (L_2)$, $\psi(H_1) \neq 0$. Let $S$ be a simple Whittaker module of type $\psi$ for $\mathfrak{q}_{\epsilon}$. Then $S \cong \widetilde{L_{\epsilon}(\psi,\xi)}$ for some $\xi \in \mathbb{C}$.
\end{theo}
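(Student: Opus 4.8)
The plan is to exhibit $S$ as the unique simple quotient of $L_{\epsilon}(\psi,\xi)$ for a suitable $\xi$, using the universal property of $M_{\epsilon}(\psi)$ together with the simplicity established in Corollary \ref{C 4.6}. First I would record that, since $S$ is a Whittaker module of type $\psi$, it carries a cyclic Whittaker vector $w_S$, so Lemma \ref{L 2.5}(i) provides a surjective homomorphism $\varphi : M_{\epsilon}(\psi) \to S$ with $\varphi(w)=w_S$. In particular $S$ is a quotient of $M_{\epsilon}(\psi)$, which has a countable basis, so $\dim_{\mathbb{C}} S \leq \aleph_0$.

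The first substantive step is to show that the central element $C$ acts on $S$ as a scalar $\xi\in\mathbb{C}$. Since $Z(U(\mathfrak{q}_{\epsilon}))=S(C)$ by Corollary \ref{C 3.6}, the even operator $C$ commutes with the whole action, so for each $\lambda\in\mathbb{C}$ both $\ker(C-\lambda)$ and $\mathrm{im}(C-\lambda)$ are graded submodules of $S$. If some $C-\lambda$ has nonzero kernel, simplicity forces $C=\lambda$ on $S$ and we are finished; otherwise every $C-\lambda$ is bijective on $S$, and I would invoke the standard Dixmier--Quillen argument. Namely, for any nonzero $v\in S$ the family $\{(C-\lambda)^{-1}v \mid \lambda\in\mathbb{C}\}$ is linearly independent: a dependence among them yields, after clearing denominators, a nonzero polynomial $p(C)$ with $p(C)v=0$, contradicting the injectivity of $p(C)$ (a product of the bijective factors $C-\alpha$). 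This produces uncountably many independent vectors in a countable-dimensional space, the required contradiction. I expect this to be the main obstacle, as it is the only place where the uncountability of $\mathbb{C}$ and the countable dimension of $S$ are used; the super structure causes no trouble since $C$ is even and central, so all kernels and images are graded.

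Once $C$ acts as $\xi$, the homomorphism $\varphi$ annihilates $(C-\xi)M_{\epsilon}(\psi)$ and therefore factors through a surjection $\bar{\varphi}: L_{\epsilon}(\psi,\xi)\to S$. Now $L_{\epsilon}(\psi,\xi)$ is not simple, because $\langle G_{-\epsilon}\overline{w}\rangle$ is a proper nonzero submodule, so simplicity of $S$ forces $\ker\bar{\varphi}\neq 0$. To pin this kernel down I would repeat the submodule analysis of Corollary \ref{C 4.6}: by Theorem \ref{T 4.5} every nonzero submodule of $L_{\epsilon}(\psi,\xi)$ contains a nonzero Whittaker vector, which by Theorem \ref{P 3.7} lies in $\mathrm{span}_{\mathbb{C}}\{\overline{w},G_{-\epsilon}\overline{w}\}$; applying $G_{-\epsilon}$ and using $G_{-\epsilon}^2=0$ (from \eqref{e:re7}) shows that such a submodule always contains $G_{-\epsilon}\overline{w}$, hence contains $\langle G_{-\epsilon}\overline{w}\rangle$.

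It then follows that $\langle G_{-\epsilon}\overline{w}\rangle \subseteq \ker\bar{\varphi}$, so $\bar{\varphi}$ descends to a surjection $\widetilde{L_{\epsilon}(\psi,\xi)}\to S$. Since $\widetilde{L_{\epsilon}(\psi,\xi)}$ is simple by Corollary \ref{C 4.6} and $S\neq 0$, this surjection has zero kernel and is therefore an isomorphism, giving $S\cong\widetilde{L_{\epsilon}(\psi,\xi)}$ as claimed.
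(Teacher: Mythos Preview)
Your proof is correct and follows essentially the same architecture as the paper's: Schur's lemma for the central character, the universal property of $M_{\epsilon}(\psi)$ to get a surjection, factoring through $\widetilde{L_{\epsilon}(\psi,\xi)}$, and then invoking Corollary~\ref{C 4.6}. The only tactical difference is in how you show $\langle G_{-\epsilon}\rangle$ lies in the kernel: the paper works on the image side, arguing that $\langle G_{-\epsilon}w_S\rangle$ is a proper submodule of the simple module $S$ and hence zero, whereas you work on the kernel side, showing $\ker\bar\varphi\neq 0$ and then re-running the analysis from Corollary~\ref{C 4.6} (via Theorems~\ref{T 4.5} and~\ref{P 3.7} and the relation $G_{-\epsilon}^{2}=0$) to see that every nonzero submodule of $L_{\epsilon}(\psi,\xi)$ contains $\langle G_{-\epsilon}\overline{w}\rangle$; both routes are valid and amount to the same content.
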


\begin{proof}
Let $w_s \in S$ be a cyclic Whittaker vector corresponding to $\psi$. It follows from Schur's lemma (Lemma 2.1.3 in \cite{39}) that the center of $U(\mathfrak{q}_{\epsilon})$ acts by a scalar. And this means that there exists $\xi \in \mathbb{C}$ such that $C s = \xi s$ for all $s \in S$. Now, using the universal property of $M_{\epsilon}(\psi)$, there exists a module homomorphism $\varphi : M_{\epsilon}(\psi) \to S$ with $uw \mapsto uw_s$. Since $w_s$ generates $S$, this map is surjective. On the one hand,
$$
\varphi \left((C -\xi 1) M_{\epsilon}(\psi) \right) = (C - \xi 1) \varphi (M_{\epsilon}(\psi)) = (C - \xi 1)S = 0.
$$
On the other hand, $\langle G_{-\epsilon}w\rangle$ is a proper submodule of $M_{\epsilon}(\psi)$, so $\varphi(\langle G_{-\epsilon}w\rangle)=\langle G_{-\epsilon}w_s\rangle$ is a submodule of $S$. However, $w_s \notin \langle G_{-\epsilon}w_s\rangle$, then $\langle G_{-\epsilon}w_s\rangle$ is a proper submodule of $S$. Since $S$ is simple, then $\langle G_{-\epsilon}w_s\rangle=0$.
This implies that $(C - \xi 1) M_{\epsilon}(\psi) \subseteq \ker \varphi$ and $\langle G_{-\epsilon}w\rangle \subseteq \ker \varphi$. Since $\widetilde{L_{\epsilon}(\psi, \xi)}$ is simple and $\ker \varphi \neq M_{\epsilon}(\psi)$, This forces $\ker \varphi = (C - \xi 1) M_{\epsilon}(\psi)\oplus\langle G_{-\epsilon}w\rangle$.Thus, $S\cong \widetilde{L_{\epsilon}(\psi, \xi)}$.
\end{proof}

For a given $\psi : \mathfrak{q}_{\epsilon}^+ \to \mathbb{C}$ and $\xi \in \mathbb{C}$, note that
$$
\begin{aligned}
L_{\epsilon}~&{= U(\mathfrak{q}_{\epsilon})(C - \xi 1) + \sum_{m>0} U(\mathfrak{q}_{\epsilon})(L_m -\psi(L_m)1) +\sum_{m>0} U(\mathfrak{q}_{\epsilon})(H_m - \psi(H_m)1)}\\
&{+\sum_{m\geq0}U(\mathfrak{q}_{\epsilon})G_{m-\epsilon} \subseteq U(\mathfrak{q}_{\epsilon})}
\end{aligned}
$$
is a left ideal of $U(\mathfrak{q}_{\epsilon})$. For $u \in U(\mathfrak{q}_{\epsilon})$, let $ \hat{u}$ denote the coset $u + L_{\epsilon} \in U(\mathfrak{q}_{\epsilon})/L_{\epsilon}$. Then we may consider $U(\mathfrak{q}_{\epsilon})/L_{\epsilon}$ as a Whittaker module of type $\psi$ with cyclic Whittaker vector $\hat{1}$.

\begin{lemm}\label{L 4.8}
Fix $\psi : \mathfrak{q}_{\epsilon}^+ \to \mathbb{C}$ with $\psi(L_1), \psi(L_2), \psi(H_1) \neq0$. Define the left ideal $L_{\epsilon}$ of $U(\mathfrak{q}_{\epsilon})$ as above, and regard $V = U(\mathfrak{q}_{\epsilon}) / L_{\epsilon}$ as a left $U(\mathfrak{q}_{\epsilon})$-module. Then $V$ is simple, and thus $V \cong \widetilde{L_{\epsilon}(\psi, \xi)}$.
\end{lemm}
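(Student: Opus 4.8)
The plan is to identify $V=U(\mathfrak{q}_{\epsilon})/L_{\epsilon}$ with $\widetilde{L_{\epsilon}(\psi,\xi)}$ by constructing surjections in both directions and invoking the simplicity established in Corollary \ref{C 4.6}. As recorded just before the statement, $V$ is a Whittaker module of type $\psi$ with cyclic Whittaker vector $\hat 1$; moreover $C\hat 1=\xi\hat 1$ and $G_{-\epsilon}\hat 1=0$, since $C-\xi 1$ and $G_{-\epsilon}$ lie in $L_{\epsilon}$ and $C$ is central.

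First I would produce a surjection $\widetilde{L_{\epsilon}(\psi,\xi)}\twoheadrightarrow V$. By the universal property of $M_{\epsilon}(\psi)$ (Lemma \ref{L 2.5}(i)) there is a surjection $\varphi\colon M_{\epsilon}(\psi)\to V$ with $\varphi(w)=\hat 1$. Because $C$ is central and acts by $\xi$ on $V$, the submodule $(C-\xi 1)M_{\epsilon}(\psi)$ lies in $\ker\varphi$; because $\varphi(G_{-\epsilon}w)=G_{-\epsilon}\hat 1=0$ and $\ker\varphi$ is a submodule, $\langle G_{-\epsilon}w\rangle\subseteq\ker\varphi$ as well. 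Hence $\varphi$ factors through $M_{\epsilon}(\psi)/\big((C-\xi 1)M_{\epsilon}(\psi)+\langle G_{-\epsilon}w\rangle\big)=\widetilde{L_{\epsilon}(\psi,\xi)}$, giving a surjection $\pi_1\colon\widetilde{L_{\epsilon}(\psi,\xi)}\to V$ with $\pi_1(\widetilde w)=\hat 1$.

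For the reverse direction I would use the universal property of the cyclic quotient $V=U(\mathfrak{q}_{\epsilon})/L_{\epsilon}$: the map $U(\mathfrak{q}_{\epsilon})\to\widetilde{L_{\epsilon}(\psi,\xi)}$, $u\mapsto u\widetilde w$, is surjective and annihilates $L_{\epsilon}$. Indeed each generator of the left ideal $L_{\epsilon}$ kills $\widetilde w$: the relations $(L_m-\psi(L_m))\widetilde w=0$ and $(H_m-\psi(H_m))\widetilde w=0$ for $m>0$ hold because $\widetilde w$ is a Whittaker vector, $(C-\xi 1)\widetilde w=0$ because $C$ acts by $\xi$, $G_{-\epsilon}\widetilde w=0$ by the very construction of $\widetilde{L_{\epsilon}(\psi,\xi)}$ as the quotient by $\langle G_{-\epsilon}\overline w\rangle$, and $G_{m-\epsilon}\widetilde w=\psi(G_{m-\epsilon})\widetilde w=0$ for $m\geq 1$ since $\psi(G_{m-\epsilon})=0$ by Lemma \ref{L 2.4}. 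Thus the map descends to a surjection $\pi_2\colon V\to\widetilde{L_{\epsilon}(\psi,\xi)}$ with $\pi_2(\hat 1)=\widetilde w$; in particular $\hat 1\mapsto\widetilde w\neq 0$, so $V\neq 0$.

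Finally I would conclude. Since $\pi_2\pi_1$ fixes the cyclic generator $\widetilde w$ and $\pi_1\pi_2$ fixes $\hat 1$, both composites are the identity, so $\pi_1$ and $\pi_2$ are mutually inverse isomorphisms and $V\cong\widetilde{L_{\epsilon}(\psi,\xi)}$; alternatively, simplicity of $\widetilde{L_{\epsilon}(\psi,\xi)}$ (Corollary \ref{C 4.6}) together with $V\neq 0$ already forces $\pi_1$ to be an isomorphism. Either way $V$ is simple, which is also the isomorphism asserted (and, once simplicity is in hand, re-derivable from Theorem \ref{C 4.7}). I expect the only genuine obstacle to be the verification that $V\neq 0$, i.e.\ that $L_{\epsilon}$ is a proper ideal: a surjection out of a possibly-zero module carries no information, so the reverse map $\pi_2$ — and with it the careful check that $\widetilde w$ is annihilated by every defining relation of $L_{\epsilon}$, especially the degenerate relation $G_{-\epsilon}\widetilde w=0$ and the vanishing $\psi(G_{m-\epsilon})=0$ for $m\geq 1$ — is the crux of the argument.
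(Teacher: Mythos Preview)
Your proof is correct and follows essentially the same route as the paper: build the surjection $M_{\epsilon}(\psi)\to V$, observe that $(C-\xi 1)M_{\epsilon}(\psi)$ and $\langle G_{-\epsilon}w\rangle$ lie in the kernel, and then invoke the simplicity of $\widetilde{L_{\epsilon}(\psi,\xi)}$ from Corollary~\ref{C 4.6}. The paper stops there, simply asserting $\ker\varphi\neq M_{\epsilon}(\psi)$; your construction of the reverse map $\pi_2$ (checking that every generator of $L_{\epsilon}$ kills $\widetilde w$) is exactly what is needed to justify that assertion, so your argument is in fact a bit more complete than the paper's on this point.
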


\begin{proof}
Note that the center of $U(\mathfrak{q}_{\epsilon})$ acts by the scalar $\xi$ on $V$. By the universal property of $M_{\epsilon}(\psi)$, there exists a module homomorphism $\varphi : M_{\epsilon}(\psi) \to V$ with $uw \mapsto u\hat{1}$. This map is surjective because $\overline 1$ generates $V$. But then
$$
\varphi \left((C - \xi 1) M_{\epsilon}(\psi) \right) = (C -\xi 1) \varphi (M_{\epsilon}(\psi) ) = (C - \xi 1)V = 0
$$
and $\varphi(\langle G_{-\epsilon}w\rangle)=\langle G_{-\epsilon}\rangle\hat{1}=0$.
It follows that $(C - \xi1) M_{\epsilon}(\psi) \subseteq \ker \varphi$ and $\langle G_{-\epsilon}w\rangle \subseteq \ker \varphi$. Since $\widetilde{L_{\epsilon}(\psi, \xi)}$ is simple and $\ker \varphi \neq M_{\epsilon}(\psi)$,
this implies $\ker \varphi = (C - \xi 1) M_{\epsilon}(\psi)\oplus\langle G_{-\epsilon}w\rangle$. Thus, $V\cong \widetilde{L_{\epsilon}(\psi, \xi)}$
\end{proof}

Finally, we will give a criterion for the irreducibility of a Whittaker module.

\begin{theo}\label{P 4.9}
Suppose that $V$ is a Whittaker module of type $\psi$ with a cyclic Whittaker vector $w$ such that $C \in \mathfrak{q}_{\epsilon}$ acts by the scalar $\xi \in \mathbb{C}$ and $G_{-\epsilon}w=0$. Then $V$ is simple. Furthermore, ${\rm Ann}_{U(\mathfrak{q}_{\epsilon})} (w) = U(\mathfrak{q}_{\epsilon}) (C-\xi 1) + \sum_{m>0}
U(\mathfrak{q}_{\epsilon})(L_m - \psi(L_m)1)+\sum_{m>0} U(\mathfrak{q}_{\epsilon})(H_m - \psi(H_m)1)+\sum_{m\geq0}U(\mathfrak{q}_{\epsilon})G_{m-\epsilon}$.
\end{theo}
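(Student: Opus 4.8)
The plan is to identify $V$ with the module $U(\mathfrak{q}_{\epsilon})/L_{\epsilon}$ from Lemma \ref{L 4.8}, whose simplicity has already been established, and then to read off both conclusions at once. The first step is to verify the containment $L_{\epsilon} \subseteq {\rm Ann}_{U(\mathfrak{q}_{\epsilon})}(w)$. This is immediate from the hypotheses: since $C$ acts by $\xi$ we have $(C-\xi 1)w=0$; since $w$ is a Whittaker vector of type $\psi$ we have $(L_m-\psi(L_m)1)w=0$ and $(H_m-\psi(H_m)1)w=0$ for all $m>0$; and $G_{m-\epsilon}w=\psi(G_{m-\epsilon})w=0$ for $m>0$ by Lemma \ref{L 2.4}. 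The single remaining generator is $G_{-\epsilon}$, the $m=0$ term in the sum $\sum_{m\geq 0}U(\mathfrak{q}_{\epsilon})G_{m-\epsilon}$, and this is annihilated precisely by the extra hypothesis $G_{-\epsilon}w=0$. Thus every generator of $L_{\epsilon}$ kills $w$, which gives the containment.

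Next I would invoke the universal property. Because $w$ is a cyclic Whittaker vector, the map $U(\mathfrak{q}_{\epsilon}) \to V$, $u \mapsto uw$, is a surjective $U(\mathfrak{q}_{\epsilon})$-module homomorphism whose kernel is exactly ${\rm Ann}_{U(\mathfrak{q}_{\epsilon})}(w)$. By the containment just established, this map factors through $U(\mathfrak{q}_{\epsilon})/L_{\epsilon}$, yielding a surjective homomorphism $\bar\varphi : U(\mathfrak{q}_{\epsilon})/L_{\epsilon} \to V$ with $\bar\varphi(\hat 1)=w$.

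The decisive step is then to use simplicity. By Lemma \ref{L 4.8}, $U(\mathfrak{q}_{\epsilon})/L_{\epsilon}$ is simple. Since $w\neq 0$, the homomorphism $\bar\varphi$ is nonzero, so its kernel is a proper submodule of a simple module and must therefore vanish; hence $\bar\varphi$ is an isomorphism. This immediately gives $V\cong U(\mathfrak{q}_{\epsilon})/L_{\epsilon}$, so $V$ is simple. Moreover, since $\ker\bar\varphi=0$, the kernel of $u\mapsto uw$ equals the preimage of $0$ under the quotient map $U(\mathfrak{q}_{\epsilon})\to U(\mathfrak{q}_{\epsilon})/L_{\epsilon}$, namely $L_{\epsilon}$ itself. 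That is, ${\rm Ann}_{U(\mathfrak{q}_{\epsilon})}(w)=L_{\epsilon}$, which is the asserted formula for the annihilator.

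There is no serious obstacle once Lemma \ref{L 4.8} is in hand; the whole argument reduces to recognizing that the hypothesis $G_{-\epsilon}w=0$ is exactly the condition needed to force $L_{\epsilon}\subseteq {\rm Ann}_{U(\mathfrak{q}_{\epsilon})}(w)$, after which the simplicity of $U(\mathfrak{q}_{\epsilon})/L_{\epsilon}$ does all the work. The only point requiring minor care is the bookkeeping of indices in the generating set of $L_{\epsilon}$, in particular noting that the Whittaker and central conditions dispose of every generator except $G_{-\epsilon}$, which is handled by the extra hypothesis.
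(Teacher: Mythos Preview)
Your proposal is correct and follows essentially the same route as the paper: show $L_{\epsilon}\subseteq\ker(u\mapsto uw)$, then use Lemma~\ref{L 4.8} (equivalently, maximality of $L_{\epsilon}$) to conclude the kernel is exactly $L_{\epsilon}$ and $V\cong U(\mathfrak{q}_{\epsilon})/L_{\epsilon}$ is simple. The only difference is cosmetic: the paper phrases the key step as ``$L_{\epsilon}$ is maximal'' rather than ``$U(\mathfrak{q}_{\epsilon})/L_{\epsilon}$ is simple,'' and it omits the explicit verification of $L_{\epsilon}\subseteq{\rm Ann}(w)$ that you spell out.
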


\begin{proof}
Let $K_{\epsilon}$ denote the kernel of the natural surjective map $U(\mathfrak{q}_{\epsilon}) \to
V$ given by $u \mapsto uw$. Then $K_{\epsilon}$ is a proper left ideal containing $L_{\epsilon} =U(\mathfrak{q}_{\epsilon}) (C-\xi 1) + \sum_{m>0}
U(\mathfrak{q}_{\epsilon})(L_m - \psi_m1)+\sum_{m>0} U(\mathfrak{q}_{\epsilon})(H_m - \psi(H_m)1)+\sum_{m\geq0}U(\mathfrak{q}_{\epsilon})G_{m-\epsilon}$. It follows from Lemma \ref{L 4.8} that $L_{\epsilon}$ is maximal. Therefore, $K_{\epsilon}=L_{\epsilon}$ and $V\cong U(\mathfrak{q}_{\epsilon})/L_{\epsilon}$ is simple.
\end{proof}

\begin{rema}
According to Schur's Lemma, Theorem \ref{P 4.9} applies to any simple Whittaker module.
\end{rema}

\vskip30pt \centerline{\bf ACKNOWLEDGMENT}
N. Jing would like to thank the support of
Simons Foundation grant 523868 and NSFC grant No.12171303.
 H. Zhang would
like to thank the support of NSFC grant No.12271332 and Shanghai Natural Science Foundation grant no. 22ZR1424600.


\end{document}